\documentclass[12pt]{amsart}

\usepackage[OT1]{fontenc}
\usepackage{amsmath, mathrsfs, amsfonts, amssymb, enumerate, caption, subcaption}
\usepackage[numbers]{natbib}
\usepackage[margin=0.75in]{geometry}
\usepackage{nameref}
\usepackage{zref-xr,zref-user}
\zxrsetup{toltxlabel}

\usepackage{tikz}
\usetikzlibrary{arrows, calc}
\usetikzlibrary{patterns, shapes, decorations.markings}

\usepackage[colorlinks,citecolor=blue,urlcolor=blue]{hyperref}
\usepackage{pgfplots}
\pgfplotsset{/pgf/number format/use comma,compat=newest}
\usetikzlibrary{external}
\tikzexternalize

\numberwithin{equation}{section}

\numberwithin{equation}{section}
\theoremstyle{definition}
\newtheorem{definition}[equation]{Definition}
\theoremstyle{definition}
\newtheorem{remark}[equation]{Remark}
\theoremstyle{definition}

\theoremstyle{definition}

\theoremstyle{lemma}
\newtheorem{assumption}[equation]{Assumption}
\theoremstyle{lemma}
\newtheorem{lemma}[equation]{Lemma}
\theoremstyle{theorem}
\newtheorem{theorem}[equation]{Theorem}
\theoremstyle{proposition}
\newtheorem{proposition}[equation]{Proposition}
\theoremstyle{corollary}
\newtheorem{corollary}[equation]{Corollary}

\theoremstyle{corollary}
\theoremstyle{definition}

\theoremstyle{example}

\theoremstyle{proposition}

\theoremstyle{definition}

\newcommand{\E}{\mathbf{E}}
\renewcommand{\P}{\mathbf{P}}
\newcommand{\R}{\mathbf{R}}
\newcommand{\C}{\mathbf{C}}
\newcommand{\N}{\mathbf{N}}
\newcommand{\Z}{\mathbf{Z}}
\newcommand{\mcl}{\mathcal{L}}
\renewcommand{\d}{d}

\zexternaldocument*[II-]{polyZII}

\title[Noise-Induced Stabilization of Planar Flows I]{Noise-Induced Stabilization of Planar Flows I}

\author[D. P. Herzog]{David P. Herzog}
\author[J. C. Mattingly]{Jonathan C. Mattingly}
\makeindex
\begin{document}
\maketitle

\begin{abstract}
  We show that the complex-valued ODE ($n\geq 1$, $a_{n+1} \neq 0$):
  \begin{equation*}
    \dot z = a_{n+1}  z^{n+1} + a_n z^n+\cdots+a_0 , 
  \end{equation*}
  which necessarily has trajectories along which the dynamics blows up
  in finite time, can be stabilized by the addition of an arbitrarily
  small elliptic, additive Brownian stochastic term.  We also show
  that the stochastic perturbation has a unique invariant probability measure
  which is heavy-tailed yet is uniformly, exponentially attracting. The methods turn on the construction of Lyapunov
  functions. The techniques used in the construction are general and
  can likely be used in other settings where a Lyapunov function is
  needed.  This is a two-part paper.  This paper, Part I, focuses on
  general Lyapunov methods as applied to a special, simplified version
  of the problem.  Part II \cite{HerzogMattingly2013II} extends the
  main results to the general setting.
\end{abstract}



\section{Introduction}

We study the following complex-valued system
\begin{equation}\label{eqn:genpoly}
\left\{
  \begin{aligned}
    dz_t &=(a_{n+1} z_t^{n+1} +a_{n}z_t^n + \cdots + a_0) \, dt + \sigma \, dB_t\\
    z_0 &\in \C
  \end{aligned}\right.
\end{equation}
where $n\geq 1$ is an integer, $a_i \in \C$, $a_{n+1} \neq 0$, $\sigma \geq
0$ is constant, and $B_t=B_t^{(1)} + i B_t^{(2)}$ is a complex
Brownian motion defined on a probability space $(\Omega, \mathcal{F},
\P)$. (We in fact prove many results for the slightly more general form given
in \eqref{eqn:polyZglot}.)

When $\sigma=0$ in equation \eqref{eqn:genpoly}, the resulting
deterministic system blows up in finite time for some
susbset of the initial data\footnote{This can be intuited by noting that the asymptotic
equation $\dot{z}_t=a_{n+1} z_t^{n+1}$ at infinity has $n$ explosive
trajectories (see Figure \ref{fig:orbits})}.  In particular even with the addition of noise ($\sigma>0$ in \eqref{eqn:genpoly}), one cannot employ the general results in \cite{Ver_00}  
as the drift vector field does not point inward radially. 
Interestingly, however, we will see that when $\sigma >0$ in \eqref{eqn:genpoly}, not only does the system
stabilize so that solutions to \eqref{eqn:genpoly} exist for all initial conditions and all finite times, but the dynamics settles down
into a unique statistical steady state with corresponding invariant
measure.  Strikingly, too, we will find that this invariant measure
has the following two properties. First, it possesses an everywhere
positive density (with respect to Lebesgue measure on $\C$) which
decays polynomially in $|z|$ at infinity. Second, it attracts all
initial conditions exponentially fast in time. Note that these two
properties cannot be simultaneously realized in a gradient system with additive noise and nominal growth assumptions on the potential, but it is possible here because the system is
strongly non-reversible with a nontrivial probability flux in
equilibrium. See Remark~\ref{fatAndFast} for a further discussion of this point. 
\begin{figure}\label{fig:orbits}
        \centering
        \begin{subfigure}[b]{0.4\textwidth}
                \includegraphics[width=\textwidth]{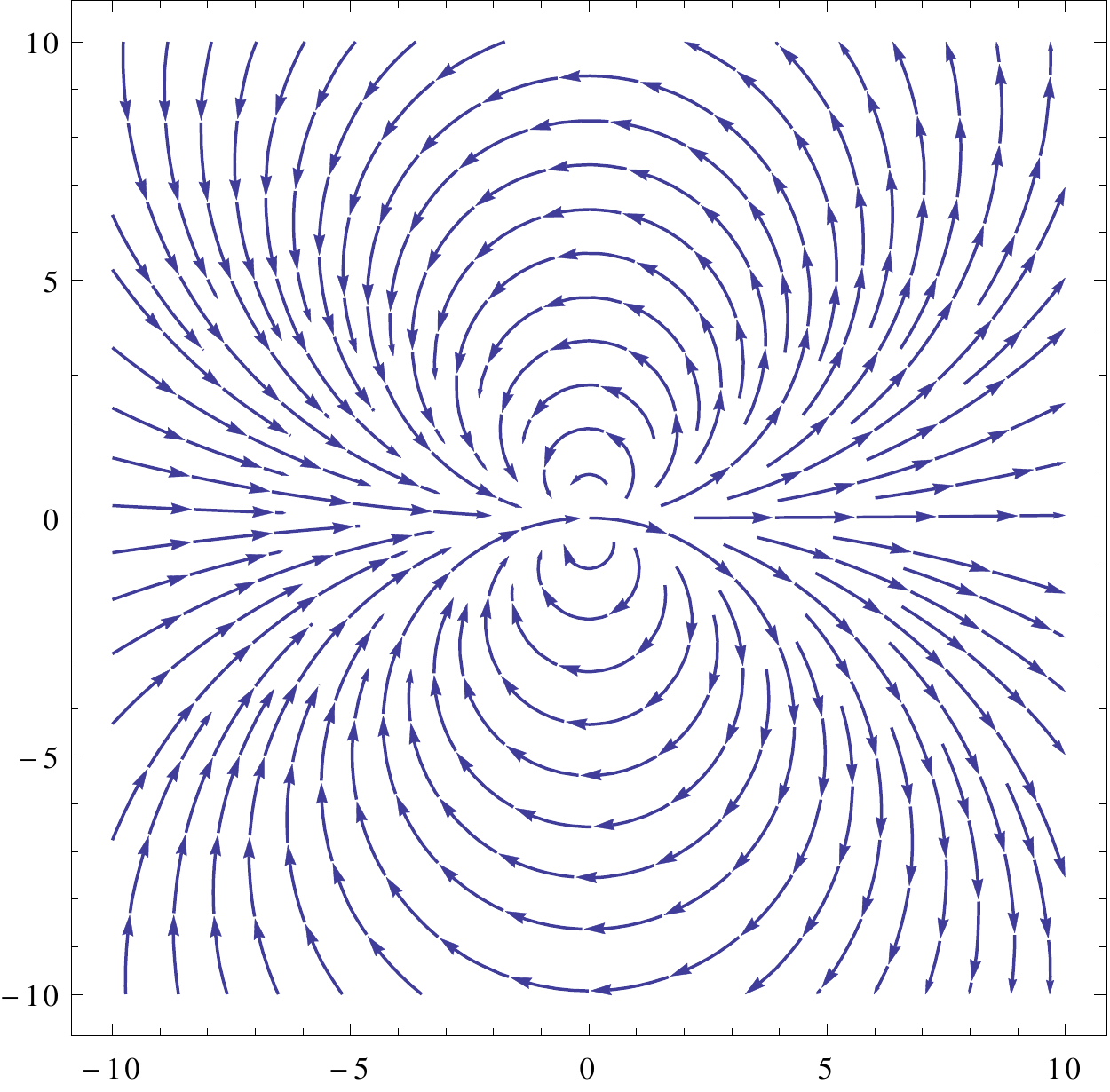}
                \caption{$\dot z = z^2$ (case: $n=1$)}
                \label{fig:n1}
        \end{subfigure}
        ~~
        \begin{subfigure}[b]{0.4\textwidth}
                \includegraphics[width=\textwidth]{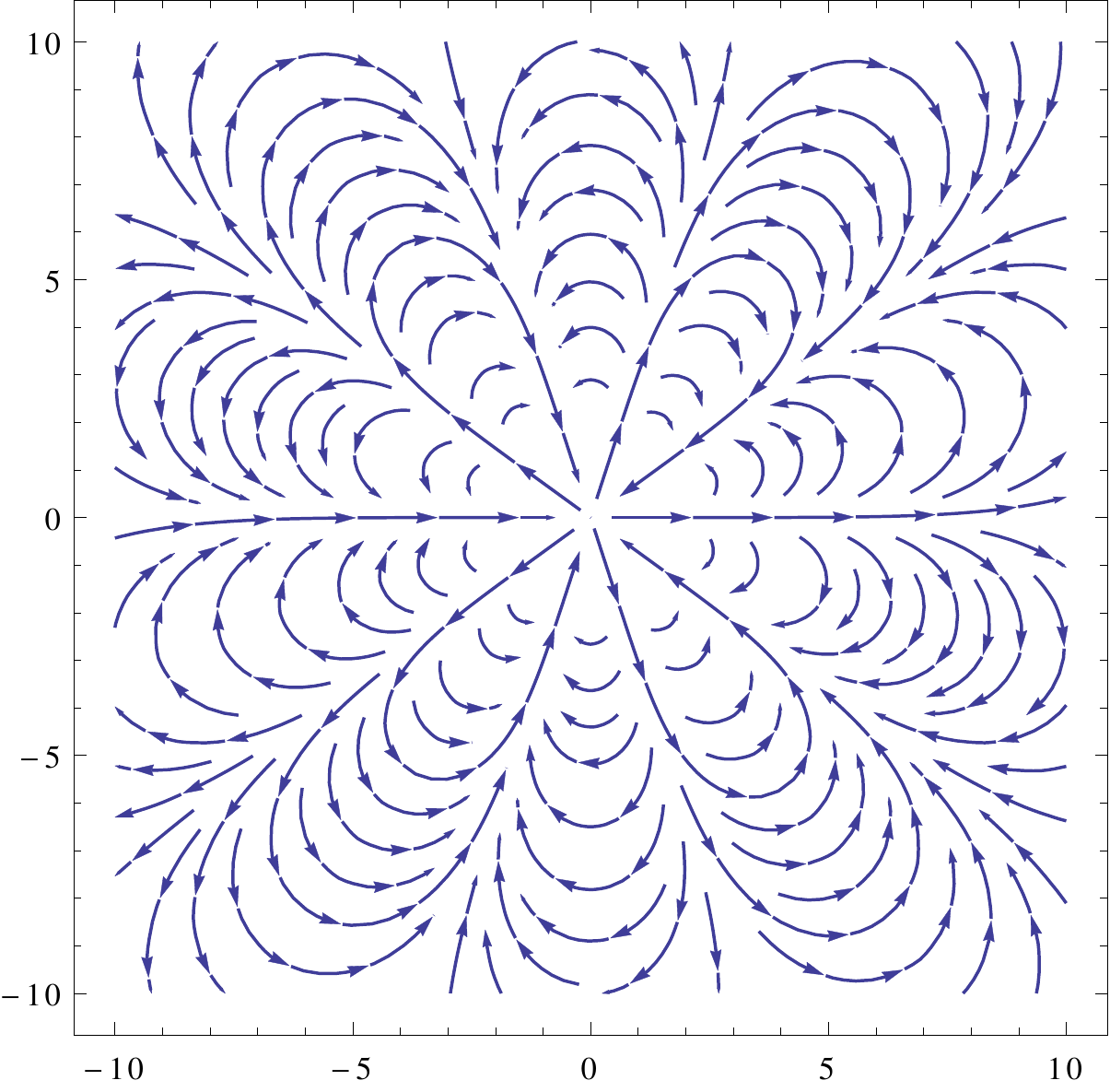}
                \caption{$\dot z = z^6$ (case: $n=5$)}
                \label{fig:n5}
        \end{subfigure}
        \caption{The orbits of $\dot z = z^{n+1}$.  Trajectories with initial condition $z_0=r e^{i\theta}$ satisfying $r>0$ and $\theta =\tfrac{2\pi k}{n}$, $k \in \Z$, explode to infinity in finite time.}\label{fig:orbits}
\end{figure}

We will see that the polynomial decay follows from a delicate
balance between the noise and unstable dynamics resulting in a global
circulation in equilibrium. Such circulation produces what might be
called ``intermittent'' behavior. Namely, the system spends long
periods of time in an order one region about the origin but at
approximately exponential times the system ``spikes'', making rapid
excursions to large values followed by equally rapid returns to
order one values (see Figure \ref{fig:timeseries}). The balance
of noise and explosion that produces the polynomial decay at infinity
also implies a specific scaling between the level of the spike and the
parameter which determines the distribution of times between spikes. This and its
relationship to the equilibrium flux 
are highlighted in the heuristic
discussions in Section~\ref{Heuristic Discussions}.

Although the results obtained here are specific to the equation
\eqref{eqn:genpoly} (and the generalization  \eqref{eqn:polyZglot}), the methods used should be applicable to a wide
range of problems. That is, to establish the main results, a sequence
of ``optimal" Lyapunov functions is constructed, and because of the
delicate interplay between the noise and the instabilities of the
underlying deterministic system, it forces one to know how to build
such functions well.  Although we do not claim to have a step-by-step
algorithmic procedure which would produce a Lyapunov function for a
given stochastic differential equation, we at least give a framework
that could be molded to handle a variety of situations. That is, we believe that
the core ideas can be applied quite broadly.

It is important to remark that the system \eqref{eqn:genpoly} and
other similar planar systems have been studied before \cite{Aarts_13, AGS_13, AKM_12,
  BodDoe_12, GHW_11, Her_11, Sch_93}.  In the case when $n=1$ in \eqref{eqn:genpoly}, the asymptotic behavior of the invariant density was first studied in \cite{GHW_11} to help extract information on the distribution of spacing between close, heavy particles transported by moderately turbulent flows.  Subsequent works \cite{Aarts_13, AGS_13, AKM_12, GHW_11, Her_11} have since investigated \eqref{eqn:genpoly} in special cases of the polynomial drift term.  In particular focusing on complex Langevin dynamics, the authors in \cite{Aarts_13, AGS_13} study numerically the equilibrium distribution when $n=2$ in equation \eqref{eqn:genpoly}.  In
addition to proving results for the general system
\eqref{eqn:genpoly}, we will also improve upon the existing ones in
these special cases.  Specifically, our results will be seen to be
optimal in the following sense: If $\mu$ denotes the unique invariant
measure for \eqref{eqn:genpoly}, then for any $\gamma \in (0,2 n)$ we
will succeed in constructing a Lyapunov function for
\eqref{eqn:genpoly} whose growth at infinity will imply that
\begin{align*}\int_{\C}  (1+|z|^\gamma ) d\mu(z) < \infty, \end{align*}
yet we will see that for any $\gamma \geq 2n$ \begin{align*} 
\int_{\C}  (1+|z|^\gamma ) d\mu(z) =\infty.
\end{align*} 
Hence it would be impossible to build a Lyapunov function with power
growth at infinity which grows faster than those we construct.

\begin{figure}[h]
\centering
\hspace{-.3in}\includegraphics[width=5in]{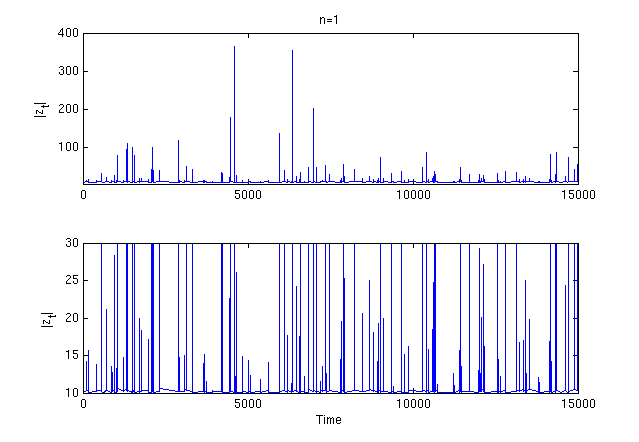}
\caption{A realization of the process $|z_t|$ plotted on the time
  interval $[0, 15000]$ where $z_t$ solves equation
  \eqref{eqn:genpoly} with $n=1$, $a_{2}=1$, $a_1=a_0=0$ and
  $\sigma=1$. See Section~\ref{sec:spacing} for a discussion of the
  spacing between the ``spikes.''}
\label{fig:timeseries}
\end{figure}

The layout of this paper is as follows.  In
Section~\ref{sec:formalInvM}, we give two non-rigorous arguments which suggest a possible rate of decay
at infinity for the invariant probability density function.  The first
argument given relies on formal asymptotic matching at the level of
the PDE solved by the invariant density and the second argument is
based on the stochastic dynamics near the point at infinity.  While
the first argument is shorter, the second is more informative in that
it not only allows us to understand the the decay rate of the
invariant density but it also allows us to understand the inter-spike
spacing distribution as displayed in Figure~\ref{fig:timeseries} and
confirm all results predicted with numerics. 
It is also almost certainly possible to make the second discussion rigorous.
 In Section~\ref{mainResults}, we state the main rigorous results
of this paper, Part I, and its continuation, Part II
\cite{HerzogMattingly2013II}, which henceforth will be referred to as
the sequel. In Section~\ref{sec:conseq_lyap}, we give the precise
definition of a Lyapunov function we will use and list some of the
consequences of its existence.  It is interesting to note that,
although our notion of a Lyapunov function is similar to that of Meyn
and Tweedie \cite{MT_93} and Khasminskii \cite{Has_12}, we only require Lyapunov functions to be
piecewise $C^2$ and globally continuous rather than globally
$C^2$. While this is useful in constructing Lyapunov functions,
it forces us to employ a generalized It\^{o}-Tanaka formula due to
Peskir \citep{Peskir_07} to estimate contributions along curves where
our Lyapunov function is not $C^2$.  This allows us to avoid smoothing
or mollifying along the boundaries which leads to a substantial
reduction in the complexity of  the argument when
compared to previous works \cite{AKM_12,GHW_11, Her_11, BodDoe_12}. In
Section~\ref{sec:mainResultsOutline}, we state the precise results we
will actually prove which, when combined with the results in
Section~\ref{sec:conseq_lyap}, will imply the main results as stated
in Section~\ref{mainResults}.  In Section~\ref{sec:GenOver}, the key
initial steps of the construction procedure that will produce the
required Lyapunov functions are discussed. In particular, we show how
we plan to apply Peskir's result \citep{Peskir_07}, allowing us to
work with less regular Lyapunov functions.
To illustrate our general methods, in
Section~\ref{sec:theconstruction} we build our Lyapunov functions
corresponding to the system \eqref{eqn:genpoly} assuming that there
are no ``significant" lower-order terms in the drift of
\eqref{eqn:genpoly}.  Section~\ref{sec:thedetails} finishes the
remaining details in this special case.  
%

In the sequel (Part II of this paper) \cite{HerzogMattingly2013II}, we
prove the results given in Section~\ref{sec:mainResultsOutline}
pertaining to \eqref{eqn:genpoly} but in the general setting without
this simplifying assumption.  Extending the result is subtly intricate
as the presence of higher-order polynomial terms with degree $\leq n$
drastically alters the nature of the process at infinity.  Moreover,
we will also give a short proof of a weaker version of Peskir's result
\cite{Peskir_07} suitable for our needs. In
Section~\ref{sec:conclusion}, we summarize what has been accomplished
in Part I of the paper with the advantage of hindsight.  We postpone
discussions of possible future directions to Part II of the work.

\section*{Acknowledgements}
The problem studied in this paper and the sequel was originally posed
by Jan Wehr, and we thank him for first asking the question and for
fruitful discussions regarding it.  We also thank Avanti Athreya,
Tiffany Kolba, Matti Leimbach, Scott Mckinley and David Schaeffer for fruitful discussions about this
and related problems. We thank Denis Talay for a talk in which he
brought  \cite{Peskir_07} to our attention and subsequent discussions
about the possible application to this setting. We would also like to acknowledge partial
support of the NSF through grant DMS-08-54879 and the Duke University Dean's office.


\section{Heuristic Discussions of the Decay at Infinity and Spike Spacing}
\label{Heuristic Discussions}
\label{sec:formal_asy}
\label{sec:formalInvM}

In this section, we present some heuristic, non-rigorous arguments
which give information about the possible structure of the invariant
measure at infinity as well as the structure of the inter-spike distribution displayed in Figure~\ref{fig:timeseries}.  Here we focus our efforts on the simplified equation
\begin{align}
\label{eqn:zn}
  d z_t = z^{n+1}_t \, dt +  \sigma dB_t
\end{align}
where $z_t \in \C$, $n\geq 1$, $\sigma >0$, and $B_t$ is a complex Brownian motion.  Although matters could be complicated by the presence of lower order terms in the drift, studying the equation above is a good starting point and it reveals much of the structure of the general equation \eqref{eqn:genpoly}.   

In Section~\ref{sec:scalingargumentp}, we start by giving an asymptotic matching argument which suggests a possible decay rate at infinity for the invariant probability density function.  In Section~\ref{sec:hueristicmodel}, we will then develop a heuristic model of the dynamics informed
by a detailed scaling analysis carried out later in Section~\ref{sec:PiecewiseI}. In Section~\ref{sec:tail}, we analyze this heuristic
model and see that it implies the same decay as predicted in Section~\ref{sec:scalingargumentp}. One advantage that the heuristic model has over the scaling argument given in Section~\ref{sec:scalingargumentp} is that it gives a more dynamic picture of the processes which lead to the polynomial decay at infinity of the stationary measure. Moreover, we will be able to, in Section~\ref{sec:spacing}, use the same heuristic model to explore the spike spacing illustrated in Figure~\ref{fig:timeseries} as well as 
validate many of the results obtained in this paper with numerical simulations.

\subsection{Scaling Argument}
\label{sec:scalingargumentp}
Let $\mathcal{L}$ denote the generator of the process $(x_t, y_t)$
where $x_t =\text{Re}(z_t)$, ${y_t=\text{Im}(z_t)}$ and $z_t$
satisfies \eqref{eqn:zn}.  Since the formal adjoint $\mathcal{L}^*$ of
$\mathcal{L}$ with respect to Lebesgue measure on $\R^2$ is uniformly
elliptic and has $C^\infty$ coefficients, any invariant probability
density function $\rho(x,y)$ with respect to Lebesgue measure on
$\R^2$ must be globally positive, $C^\infty$, and satisfy the equation  $\mathcal{L}^* \rho=0$ on $\R^2$.  To analyze the behavior of $\rho(x,y)$ as $|(x,y)|\rightarrow \infty$, we first convert the equation $\mathcal{L}^* \rho=0$ to polar coordinates. Letting $\tilde{\rho}(r, \theta)= \rho(r \cos(\theta), r \sin(\theta))$ we see that $\tilde{\rho}$ satisfies the following equation for $r>0$    
\begin{multline*}
 r^n\big[ (2 n+2)\cos(n \theta) \tilde{\rho} + r \cos(n \theta) \partial_r \tilde{\rho}  + 
 \sin(n\theta)\partial_\theta \tilde{\rho}\big]  -  \sigma^2\big[ \frac{1}{2r}\partial_r\tilde{ \rho}+ \frac{1}{2}\partial_r^2
 \tilde{\rho}   + \frac{1}{2
   r^2} \partial_\theta^2 \tilde{\rho}\big] =0\,.
\end{multline*}
Considering the effect of the scaling transformation $(r,\theta) \mapsto (\lambda r
,\theta)$ on the equation above produces
\begin{multline*}
\lambda^n r^n\big[ (2 n+2)\cos(n \theta) \tilde{\rho} + r \cos(n \theta) \partial_r \tilde{\rho}  + 
 \sin(n\theta)\partial_\theta \tilde{ \rho}\big] -  \frac{\sigma^2}{\lambda^2}\big[ \frac{1}{2r}\partial_r \tilde{\rho}+ \frac{1}{2}\partial_r^2
\tilde{ \rho}   + \frac{1}{2
   r^2} \partial_\theta^2 \tilde{\rho}\big] =0\,.
\end{multline*}
Observe that this transformation allows us to gauge the asymptotic behavior of $\tilde{\rho}$, hence $\rho$, along a fixed radial direction.  
Extracting the leading order $\lambda^n$ term assuming that
$\partial_\theta \tilde{\rho}=0$ and $\cos(n \theta)\neq 0$, we obtain the leading order  equation
\begin{align*}
  (2n+2)\tilde{ \rho} + r \partial_r \tilde{\rho} =0.
\end{align*}
Solving this equation produces
\begin{align}
  \label{eq:formalScaling}
  \rho(x, y)=\tilde{\rho}(r,\theta)  \sim \frac1{r^{2n+2}}\,.
\end{align}
We will see later that, in fact, that this scaling is essentially correct.

\begin{figure}
        \centering
        \begin{subfigure}[b]{0.3\textwidth}
                \includegraphics[width=\textwidth]{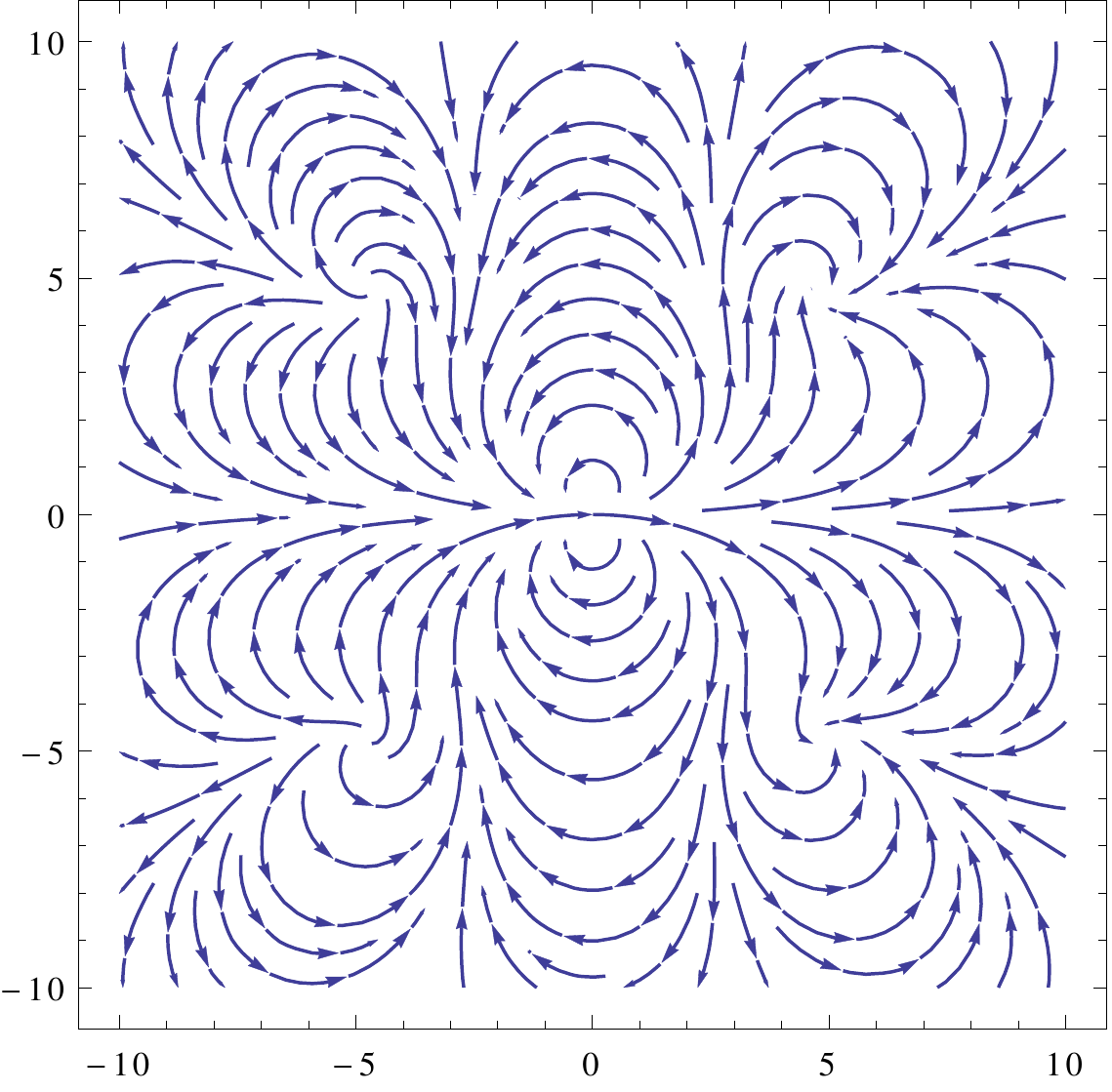}
        \end{subfigure}
        ~~
        \begin{subfigure}[b]{0.3\textwidth}
                \includegraphics[width=\textwidth]{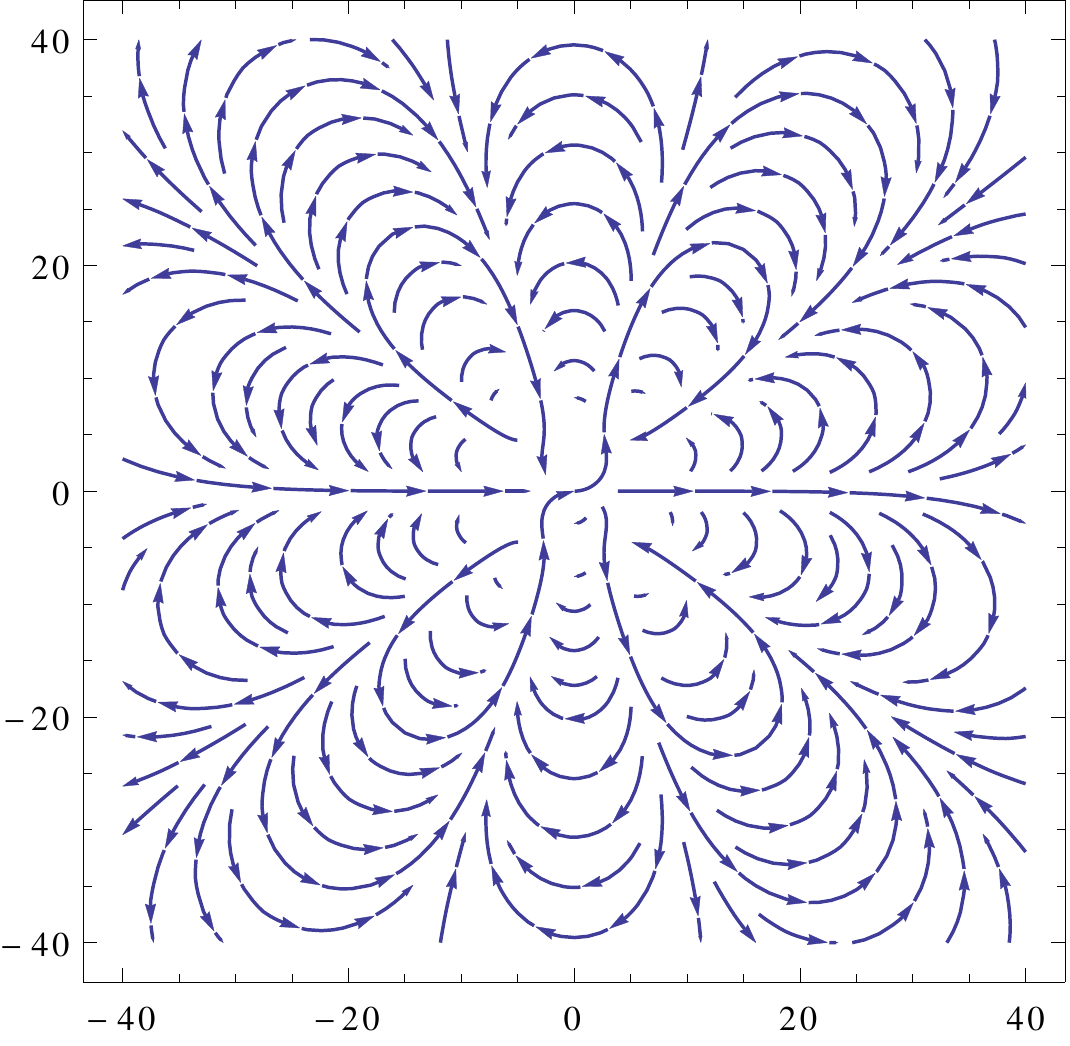}
        \end{subfigure}
        \caption{The trajectories of $\dot{z}=z^6 +2000 z^2$ plotted on $[-10,10]^2$ (left) and $[-40,40]^2$ (right).  For $|z|$ small, the dynamics qualitatively resembles that of $\dot{z}=z^2$ (see Figure \ref{fig:n1}).  As $|z|$ becomes larger, the dynamics starts to resemble that of $\dot{z}=z^6$ (see Figure \ref{fig:n5}).}
\end{figure}

\subsection{The Heuristic Model}
\label{sec:hueristicmodel}
First observe that the rotated process $Z_t:=e^{i\frac{2\pi k}{n}}z_t$ solves the equation
\begin{align*}
dZ_t = Z_t^{n+1} \, dt + \sigma \, d\widetilde{B}_t
\end{align*}
where $\widetilde{B}_t$ is also a complex Brownian motion.  In particular, this fact allows us to reduce our analysis of the process $z_t$ restricted to the set (in polar coordinates)  
\begin{align}
 \mathcal{R}=\{(r, \theta)\,: \,  r\geq r^*,\, -\tfrac{\pi}{n} \leq \theta \leq \tfrac{\pi}{n} \}
\end{align}
as the analysis in the remaining wedge-shaped regions $\mathcal{R}_k$, $k\in \Z$, given by
\begin{align*}
\mathcal{R}_k= \{(r, \theta) \, : \, (r, \theta- 2\pi k/n) \in \mathcal{R} \}\end{align*}
can be carried out in a similar fashion.

As we will see in Section~\ref{sec:PiecewiseI}, to leading order inside of $\mathcal{R}$ the noise only has an effect
inside of the region 
\begin{align*}
  \mathcal{S}(\eta^*,r^*)=\{(r, \theta) \in \mathcal{R}\, : \, |\theta| r^{\frac{n+2}{2}} \leq
  \eta^*\} \cap\{ (r,\theta) : r \geq r^*\}  
\end{align*} 
where $r^*, \eta^* >0$ are both large and fixed.  In order to study the process $z_t=r_t e^{i\theta_i}$ in this region, it is convenient to introduce the variable $$\eta_t :=
\theta_t r_t^{\frac{n+2}{2}}$$ which, when paired with $r_t$, 
still completely determines the state of the system at time $t$.  Moreover, after making the
time change 
\begin{align}\label{eq:timechangeIntro}
  t \mapsto \int_0^t r_s^n ds, 
\end{align}
we will see later that the $(\eta_t,r_t)$ dynamics are well-approximated in $\mathcal{S}(\eta^*,r^*)$ by
\begin{align}\label{eq:approxEtaR}
  d\eta_t = \Big(\frac32 n+1\Big) \eta_t dt + \sigma dW_t
  \quad\text{and}\quad\dot r_t = r_t
\end{align}
where $W_t$ is a standard scalar Brownian Motion.  In contrast,  when the process belongs to the set $ \mathcal{S}(\eta^*,r^*)$, it approximately follows the deterministic equation $\dot z = z^{n+1}$ (or rather $\dot z = z^{n+1}|z|^{-n}$ after the time change). The orbits of this system are
concentric loops which share a single common point $z=0$. Indexing
each loop by its maximal distance from the origin $K$, we see that the
$K$-th  loop is the locus of points
\begin{align}\label{Jset}
  \mathcal{J}(K)=\Big\{(r,\theta) \,: \, r  = K |\sin(n\theta)|^{\frac{1}{n}},\,
0<  |\theta| < \frac\pi{n} \Big\}\,.
\end{align}

Combining these two approximate dynamics, we obtain a model for the
behavior far from the origin. It is not hard to argue that the most
probable route to a point $\zeta \in  \mathcal{R}$ far from the
origin, with $|\zeta| >r^*$, is to first enter
$\mathcal{S}(\eta^*,r^*)$ through the set $\{ (r,\theta) : r =
r^*\}$. The trajectory system will then spend a random about
of time in $\mathcal{S}(\eta^*,r^*)$ before exiting at time 
\begin{align*}
   \tau=\inf\{ t >0 : z_t \not \in \mathcal{S}(\eta^*,r^*)\}=\inf\{ t >0 : |\eta_t| > \eta_*\}\,.
\end{align*}
Since we are using the approximate dynamics \eqref{eq:approxEtaR} in
$\mathcal{S}(\eta^*,r^*)$, it is not hard to approximate both the exit time $\tau$ and exit location from $\mathcal{S}(\eta^*,r^*)$.  Clearly, 
$\eta_\tau=\pm\eta^*$. Moreover, it is also easy to see from \eqref{eq:approxEtaR} that $r_\tau=r^* e^{\tau}$.
Since $$|\eta_\tau | =r_\tau^{\frac{n+2}{2}} |\theta_\tau| = \eta^*,$$ we
can also solve for $|\theta_\tau|$. More importantly, we can find the
value of $K$ which parametrizes the orbit  $\mathcal{J}(K)$ passing through
$(r_\tau,\theta_\tau)$, the point of exit from
$\mathcal{S}(\eta^*,r^*)$. There is a small ambiguity in the sign of
$\theta_\tau$, however none of the properties of interest will
depend on this sign, so we take to positive value for
definiteness. 

Letting $K_\tau$ denote the value of $K$ at the exit point, we see
that since $|\theta|$ is small in $\mathcal{S}(\eta^*, r^*)$
\begin{align}  \label{eq:Ktau}
  K_\tau=
\frac{r_\tau}{|\sin(n\theta_\tau)|^{\frac1n}}\approx
    \frac{r_\tau}{|n\theta_\tau |^{\frac1n}}=
    \frac{(r^*)^{\frac{3n+2}{2n}}}{(n\eta^*)^{\frac1n}}
    e^{\frac{3n+2}{2n} \tau}. 
\end{align}
Once outside of $\mathcal{S}(\eta^*,r^*)$ on the orbit
$\mathcal{J}(K_\tau)$, the dynamics is taken to be deterministic.
Hence the probability of ending up on any point on
$\mathcal{J}(K)$ is determined by the probability that $K_\tau=K$.  Therefore we now further explore the distribution of the exit time $\tau$.

In our model dynamics, $\eta_t$ is a one-dimensional (unstable)
Ornstein-Uhlenbeck process. Letting $q_t(\eta,\eta')$ be the
transition density of $\eta_t$ starting from $\eta$ and killed whenever $|\eta_t|=\eta^*$, 
it
is not hard to see that
\begin{align*}
  \P(\tau > t ) = \int_{-\eta^*}^{\eta^*}q_t(\eta,\,\eta')d\eta' \,.
\end{align*}
Standard spectral theory gives the existence of functions
$c_k(\eta,\eta')$ so that  if $0<\lambda_1< \lambda_2 <
\lambda_k< \cdots$ are the eigenvalues of 
\begin{align*}
  (\mathcal{Q} f)(\eta)= -\frac{\sigma^2}{2}\partial_\eta^2 f +
  \big(\tfrac{3n +2}{2}\big) \partial_\eta (\eta f)
\end{align*}
on the domain $[-\eta^*,\eta^*]$ with zero boundary conditions, then
\begin{align*}
  q_t(\eta,\eta') = \sum_{k=1}^\infty c_k(\eta,\eta') e^{-\lambda_k t}\,.
\end{align*}
Hence for $t$ large
\begin{align}
  \label{eq:asyTau}
  \P(\tau > t ) \approx   e^{-\lambda_1 t} \int_{-\eta^*}^{\eta^*}c_k(\eta,\eta')d\eta'\,.
\end{align}
Clearly, $\lambda_1$ is a function of $\eta^*$.  However, it is not
hard to see that as $\eta^* \rightarrow \infty$, $\lambda_1
\rightarrow \tfrac{3n +2}{2}$. 

Another way of seeing this is to note that the solution to
\eqref{eq:approxEtaR} with $\eta_0=0$ can be written as 
\begin{align*}
  \eta_\tau = \sigma e^{\frac{3n +2}{2} \tau} \int_0^\tau  e^{-\frac{3n +2}{2} s}dW_s .   
\end{align*}
Since we are interested in large $r^*$, by the scaling of the equation
we can consider $\sigma$ small for a fixed $r^*$. Since $|\eta_\tau|=\eta^*$, rearranging the above
equation produces $$\frac{\eta_*}{\sigma}   e^{-\frac{3n +2}{2} \tau} =
\Big|\int_0^\tau  e^{-\frac{3n +2}{2} s}dW_s\Big|.$$ When $\sigma$ is small,
$\tau$ will be large and effectively conditionally independent of the bulk of the
Brownian trajectory $\{ W_t : t \in [0,\tau]\}$, particularly  $\{ W_t
: t \in [0,\tau/2]\}$ which makes the dominate contribution for $\tau$
large. Making this leap, for large $\tau>0$ the righthand side of the equation above is approximately Gaussian with mean zero and variance $\approx \frac{1}{3n+2}$. Solving for $\tau$ gives
\begin{align}\label{eq:TauPath}
  \tau \approx \frac{2}{3n +2}\log(\frac{\eta_*}\sigma)
\end{align}
for $\sigma$ small (or equivalently $\eta^*$ big). 

Both this pathwise calculation and the more classical PDE calculation presented first
have their individual merits. We are not married to either.  Since we have chosen to take a
Lyapunov function approach, however, we tend to need expectations of various
quantities for which the PDE methods are well suited. For this reason
and the fact that we wish to avoid developing both modes of calculation in parallel, we
slightly favor the PDE calculations in this note.  Nevertheless, many ideas presented in this packaging were first developed by using a more pathwise reasoning. 


To summarize our model of the dynamics: In equilibrium, trajectories
are injected into the wedge $\mathcal{S}(\eta^*,r^*)$ a certain
rate through the boundary $\{r=r^*\}$. The most likely way for a trajectory to reach a large value is to spend enough time in the wedge
$\mathcal{S}(\eta^*,r^*)$, hence having a large enough exit time $\tau$. The
tails of this exit time are approximately exponential and are given by
\eqref{eq:asyTau}. Once the process exits $\mathcal{S}(\eta^*,r^*)$,
it follows the deterministic trajectory contained in
$\mathcal{J}(K_\tau)$ where the constant $K_\tau$ was set by the exit
point from $\mathcal{S}(\eta^*,r^*)$ using \eqref{eq:Ktau}.
We will now use this caricature of the dynamics to study the decay of
the invariant
measure and spike spacing distribution. 

\subsection{Tails of the invariant measure}\label{sec:tail}
As already mentioned, the maximal distance from the origin on a given orbit 
$\mathcal{J}(K)$ is $K$.  Note that this maximal distance is realized along this orbit precisely when $\theta=\pm \pi/(2n)$ as
$|\sin(n\theta)|^{1/n}=1$ when $\theta=\pm \pi/(2n)$. As previously noted, the most likely way to reach a point far from the  origin in  the principal wedge
$\mathcal{R}$ is to  pass though $\mathcal{S}(\eta^*,r^*)$ and exit  on  the curve $\mathcal{J}(K_\tau)$ with the parameter $K_\tau$ large. In turn, the trajectory upon exit from $\mathcal{S}(\eta^*, r^*)$ then follows a deterministic orbit eventually reaching a distance $K_\tau$ from the origin.

Since our Heuristic model was of the time-changed process obtained through \eqref{eq:timechangeIntro}, we will first study the tails of the invariant measure of the time-changed  process and then undo the time change at the end.  Let $\tilde \mu$ denote the 
stationary measure of the time-changed process, $\P_{\tilde\mu}$ the probability 
measure of the time-changed Markov process with initial distribution 
$\tilde{\mu}$, and $\P_{(r,\theta)}$ the probability measure of the 
time-changed Markov process with initial condition $(r,\theta)$.

In light of the heuristic model,  for  for $R>0$ large 
\begin{align*}
  \P_{\tilde\mu}(|z| \geq R) \approx c\P_{(r^*,0)}(K_\tau > R) 
\end{align*}
where $c>0$ is a positive constant capturing the flux into $\mathcal{S}(\eta^*,r^*)$ around $r^*$ in equilibrium. 
Using  the relation \eqref{eq:Ktau}, we find that for $R>0$ large 
\begin{align*}
  \P_{\tilde\mu}(|z| \geq R) \approx c \P_{(r^*,0)}( \tau > \tfrac{2n}{3n+2}
  \log \tfrac{R}{R_0}) 
\end{align*}
for some positive constant $R_0$. Now pick 
$\eta^* >0$ large enough so that $\lambda_1 \approx \frac{3n+2}{2}$.  Thus for $R>0$ large enough, \eqref{eq:asyTau} gives 
\begin{align*}
  \P_{\tilde\mu}(|z| \geq R) \approx cR^{-\tfrac{2n\lambda_1}{3n+2}}
  \approx cR^{-n}\,.
\end{align*}
One can also obtain the fact that $\P_{(r^*,0)}(K_\tau \geq R) \approx
cR^{-n}$ by combining the calculation used to obtain
\eqref{eq:TauPath} and \eqref{eq:Ktau}.

Letting $\rho(r,\theta)$ denote density of $\tilde \mu$, for 
$\theta \neq 0$ we have 
\begin{align*}
 - \frac{c}{R^{n+1}}\approx\frac{\partial\ }{\partial R}    \P_{\tilde\mu}(|z| \geq R)  =
  \frac{\partial\ }{\partial R}  \int_R^\infty \int_0^{2\pi}
  \rho(r,\theta) 
  \,d\theta\, r\, dr = - R \int_0^{2\pi}
  \rho(R,\theta) \, d\theta\,. 
\end{align*}
From this we conclude that 
\begin{align*}
  \P_{\tilde\mu}( |z| \in dR ) \approx \frac{c}{R^{n+2}} \,dR 
\end{align*}
for large $R$. Hence if $\mu$ denotes the stationary measure of the original system \eqref{eqn:zn} without 
the time change we see that 
\begin{align*}
   \P_{\mu}( |z| \in dR ) \approx \frac{c}{R^{2n+2}}\, dR 
\end{align*}
which agrees with the scaling argument of Section~\ref{sec:scalingargumentp}.  

\subsection{Spacing of the excursions}
\label{sec:spacing}
We now investigate the distribution of the time between ``spikes" of size $R>0$, as illustrated in Figure \ref{fig:timeseries}.

It is reasonable to assume that for a large but fixed value of $r^*>0$,
trajectories with high probability spend a random amount of time in
$\{r \leq r^*\}$
before passing through $\mathcal{S}(\eta^*,r^*)$ to hit larger radial values. For any $R \gg  2r^*$, define the following sequence of stopping times:
$S_0=0$ and for $i\geq 1$ set
\begin{align*}
  T_{i}=\inf\{ t \geq  S_{i-1} : r_t \geq R\} \qquad\text{and}\qquad
  S_i=\inf\{ t \geq T_i : r_t \leq r^*\}\,.
\end{align*}
We are interested in the distribution of the time between
``spikes"~$T_{i+1}- T_{i}$. We will see that $T_{i+1}-T_i$ is
distributed as a compound-geometric with geometric parameter that
scales like $R^{-n}$. Hence we expect
\begin{align*}
  \E \big(T_{i+1}- T_{i}\big) = c R^n
\end{align*}
for some positive constant $c$.  

We begin by defining successive exit times from the set $\{r <
2r^*\}$ with an intervening return to the set  $\{r \leq r^*\}$:  Let $s_0=0$ and for $i\geq 1$
\begin{align*}
  t_{i}=\inf\{ t \geq s_{i-1} : r_t \geq 2r^*\} \qquad\text{and}\qquad
  s_i=\inf\{ t \geq t_i : r_t \leq r^*\}\,.
\end{align*}
With high probability, each exit from $\{r < R\}$ happens through the
region $\mathcal{S}(\eta^*,2r^*)$.  In turn, the locations of the
exits through the boundary of $\mathcal{S}(\eta^*,2r^*)$ determine
which of the orbits $\mathcal{J}(K)$ the dynamics follows. As in the
discussion at the start of this section, let $\tau$ denote the exit
time from $\mathcal{S}(\eta^*,2r^*)$ and let $K_\tau$ be the value of
the constant $K$ used to index the orbit $\mathcal{J}(K)$ when the
dynamics leaves $\mathcal{S}(\eta^*,2 r^*)$.
\begin{figure}
\centering
\hspace{-.3in}\includegraphics[width=5in]{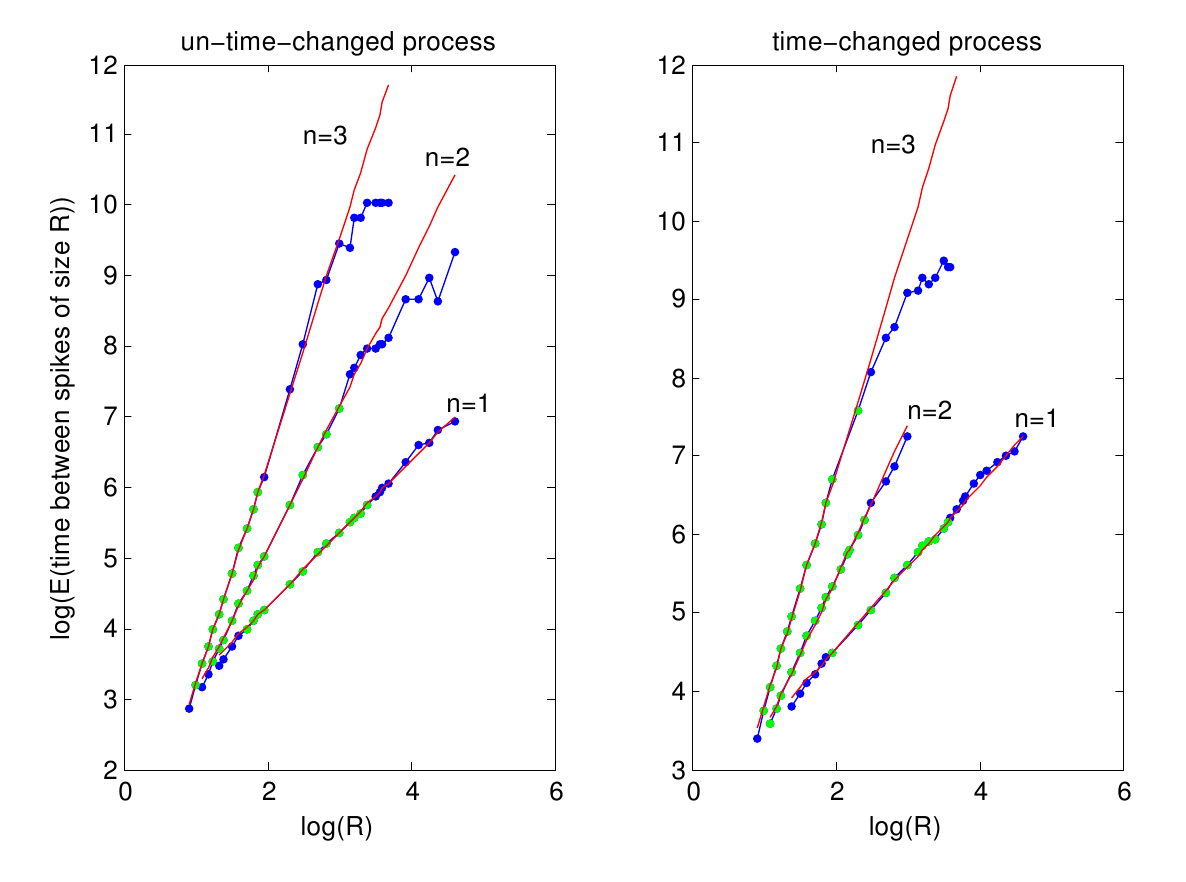}
\begin{center}
\begin{tabular}{|r|c|l|}
  \hline
  $n$ & Slope (no time change)  & Slope (time change) \\
\hline
  1 & 1.031 & 1.035\\
  \hline
  2 & 2.034 & 1.960\\
  \hline 
  3 & 3.175 & 3.001\\
  \hline
\end{tabular}
\end{center}\caption{Simulation results of $\log\E[T_{j+1}-T_j]$ versus $\log(R)$ without the time change (left) and with the time change (right).  Points sufficiently far from $0$ but with enough data (in green) are fitted with a least squares approximation.  Slopes of each line in either case are given in the table above and are as predicted.}   
\label{fig:meanspike}
\end{figure}

If we define
$p_R=\P(\text{height of spike is $\geq$ R})$ on a given entry
into $\mathcal{S}(\eta^*,2r^*)$ from the $\{r=2r^*\}$ boundary, then using the same logic as in the previous section
\begin{align*}
  p_R = \P( K_\tau >R) \approx cR^{-\tfrac{2n\lambda_1}{3n+2}} \approx
  cR^{-n}.  
\end{align*}
It is reasonable to assume that the $K_\tau^{(i)}$ associated to $i$th
entry into $\mathcal{S}(\eta^*,2r^*)$ is independent of the
$K_\tau^{(j)}$ with $j\neq i$. Letting $n_R^{(i)}$ denote the number
of excursions to level $2r^*$ needed to produce an spike greater than $R$ after time
$T_{i-1}$, we observe that under the independence assumption $n_R^{(i)}$ is geometric with parameter 
$p_R$.  Hence, $\E n_R^{(i)} = 1/p_R$.  Since 
\begin{align*}
  T_{j+1} - T_j = \sum_{i=N_j}^{N_{j+1}} (t_{i}  -t_{i-1})\qquad \text{where}
  \qquad N_j= N_{j-1} + n_R^{(j)}
\end{align*}
we see that $T_{j+1} - T_j$ is a compound-geometric random variable. Hence we have 
\begin{align*}
 \E\big[ T_{j+1} - T_j\big]\approx\E\big[ n_R^{(j)}\big] \E \big[  t_{2}-
 t_{1}\big]\approx \frac{\E \big[  t_{2}-
 t_{1}\big]}{p_R} \approx  c R^{n}
\end{align*}
Hence we expect the average spacing between peaks of spikes greater
than $R$ should grow like $R^{n}$.  Consult Figure \ref{fig:meanspike}
for numerical results which agree with this heuristic prediction.

All of the analysis above holds equally well even if the dynamics has
not been time-changed using \eqref{eq:timechangeIntro}. The critical quantity is the location at which
the process exits from $\mathcal{S}(\eta^*,2r^*)$ and this is
unchanged by the time-change. The time-change only affects the time
between entrances into $\mathcal{S}(\eta^*,2r^*)$ and
the time it takes to traverse the spike excursion out to level $R$
(see Figure \ref{fig:meanspike}), in particular it has no effect on
the probability $p_R$. The numerical  confirm that the predictions
still hold.

\section{Main Results: Ergodicity, Mixing, and the Behavior of the
  Stationary Measure at Infinity}
\label{mainResults}

Although we have thus far only discussed equation \eqref{eqn:genpoly}, we will see that our main results, to be stated in this section, hold for more general complex-valued SDEs.  In particular, our analysis can tolerate more general lower-order terms in the drift.  Therefore, throughout the remainder of this paper and the sequel~\cite{HerzogMattingly2013II} we assume that the complex-valued process $z_t$ satisfies more generally the following SDE
\begin{align}
\label{eqn:polyZglot}
dz_t = [ a_{n+1}   z_t^{n+1} + F(z_t, \bar{z}_t) ]\,dt + \sigma \, dB_t
\end{align}
where $a_{n+1}\in \C\setminus \{ 0\}$, $n\geq 1$, $\sigma>0$, $B_t=B_t^1 + i B_t^2$ is a complex Brownian motion and $F(z, \bar{z})$ is a complex polynomial in $(z, \bar{z})$ with $F(z, \bar{z}) =\mathcal{O}(|z|^n)$ as $|z|\rightarrow \infty$.  Notice that equation \eqref{eqn:genpoly} is the special case of equation \eqref{eqn:polyZglot} where $F(z, \bar{z})\equiv F(z)$ is a complex polynomial in the variable $z$ only with $\text{degree}(F)\leq n$.

The global-in-time existence of the Markov process induced by
\eqref{eqn:polyZglot} is neither obvious nor certain given the unstable
nature of the underlying deterministic dynamics.  Consequently, even
if it is shown that the process does not explode in finite time, the
existence of an invariant measure is still in question.  Assuming,
however, both issues can be settled, the formal asymptotic
calculations of Section \ref{sec:formalInvM} suggest that the
probability density function of the invariant probability measure has a certain
polynomial decay rate at infinity. The following result, one of the
principal rigorous results of this article and the sequel
\cite{HerzogMattingly2013II}, shows that these formal computations are essentially
correct.

\begin{theorem}\label{thm:decayStMeasure} The Markov process defined
  by \eqref{eqn:polyZglot} is non-explosive and possesses a unique
  stationary measure $\mu$.  In addition, $\mu$ satisfies:
\begin{align*}
\int_\C (1+|z|)^\gamma \, d\mu(z) < \infty \,\,\text{ if and only if }\,\, \gamma < 2n.  
\end{align*}
Furthermore, $\mu$ is
ergodic and has a probability density function $\rho$ with respect to
Lebesgue measure on $\R^2$ which is smooth and everywhere positive.
 \end{theorem}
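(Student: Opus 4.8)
The plan is to reduce the entire theorem to the construction of a suitable family of Lyapunov functions and then invoke the abstract consequences of Lyapunov-function existence developed in Section~\ref{sec:conseq_lyap}. First, because the drift of \eqref{eqn:polyZglot} is polynomial (hence locally Lipschitz) and the noise is constant, a unique strong solution exists up to an explosion time, and $(z_t)$ is a Markov process on that stochastic interval. To upgrade this to non-explosion and to obtain \emph{some} invariant probability measure, I would exhibit one globally continuous, piecewise-$C^2$ function $V_0\colon\C\to[1,\infty)$ with $V_0(z)\to\infty$ as $|z|\to\infty$ and $\mathcal L V_0\le -c_0+b_0\mathbf 1_{K_0}$ for a compact $K_0$; applying Peskir's generalized It\^o--Tanaka formula across the finitely many smooth curves where $V_0$ fails to be $C^2$ (and choosing the normal-derivative jumps so the local-time correction has a favorable sign) yields non-explosion as in Section~\ref{sec:conseq_lyap}, and the Krylov--Bogoliubov argument then produces $\mu$. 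Uniqueness, ergodicity, and the regularity statement are afterwards essentially automatic: since $\sigma>0$ the generator is uniformly elliptic with $C^\infty$ coefficients, so the transition semigroup is strong Feller, and the Stroock--Varadhan support theorem makes the process irreducible on all of $\C$; together with the Lyapunov bound (which renders $K_0$ a petite/small set) the Meyn--Tweedie machinery of Section~\ref{sec:conseq_lyap} gives a unique stationary measure, uniqueness forces ergodicity, and elliptic regularity applied to $\mathcal L^*\rho=0$ together with a Harnack inequality shows $\rho\in C^\infty$ is everywhere positive.

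The ``if'' half of the moment dichotomy is where the substance lies; it is precisely the Lyapunov construction carried out in Sections~\ref{sec:theconstruction}--\ref{sec:thedetails}. For each fixed $\gamma\in(0,2n)$ I would build a globally continuous, piecewise-$C^2$ function $V_\gamma$ with $\mathcal L V_\gamma\le -c_\gamma(1+|z|)^\gamma+b_\gamma\mathbf 1_{K_\gamma}$; feeding this into the general result of Section~\ref{sec:conseq_lyap} gives $\int_\C(1+|z|)^\gamma\,d\mu\le b_\gamma\mu(K_\gamma)<\infty$. The construction has to follow the geometry exposed in the heuristic sections: far from the origin, $\C$ splits into the $n$ wedge-like regions $\mathcal R_k$, on each of which, after the time change \eqref{eq:timechangeIntro}, the motion decouples into the deterministic flow along the petals $\mathcal J(K)$ (see \eqref{Jset}) and a thin diffusive boundary layer $\mathcal S(\eta^*,r^*)$ governed by the unstable Ornstein--Uhlenbeck system \eqref{eq:approxEtaR}. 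One designs $V_\gamma$ region by region --- morally a decreasing function of the orbit index $K$ on the deterministic part, modified inside the boundary layer so that the destabilizing $\eta$-drift is dominated --- then patches the pieces so that global continuity holds and the curves of non-smoothness contribute correction terms of the right sign. The balance tightens as $\gamma\uparrow 2n$, which is exactly why $2n$ is the threshold.

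For the ``only if'' half it suffices to show $\int_\C(1+|z|)^{2n}\,d\mu=\infty$, the case $\gamma\ge 2n$ then being immediate. My approach would be to prove a matching lower bound on the tail, $\mu(\{|z|\ge R\})\ge cR^{-2n}$ for $R$ large, whence $\int(1+|z|)^{2n}\,d\mu\ge c\int^{\infty}R^{2n-1}R^{-2n}\,dR=\infty$. This can be obtained analytically, by constructing a subsolution of the elliptic equation $\mathcal L^*\rho=0$ that decays like $|z|^{-2n-2}$ along a cone of directions and comparing via the Harnack/maximum principle for $\mathcal L^*$; or probabilistically, by quantifying the heuristic of Sections~\ref{sec:hueristicmodel}--\ref{sec:tail}: starting from a set of positive $\mu$-measure near the mouth of an explosive orbit, the probability that the diffusion shadows that orbit past level $R$ --- equivalently that the rescaled angle $\eta_t$ remains in $[-\eta^*,\eta^*]$ over the requisite time window for the unstable Ornstein--Uhlenbeck dynamics --- is bounded below by a constant multiple of $R^{-2n}$, and stationarity transfers this estimate to $\mu$.

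I expect the construction of the $V_\gamma$ to be the main obstacle. Once the abstract consequences of Section~\ref{sec:conseq_lyap} are in hand, non-explosion, existence, uniqueness, ergodicity, and the smoothness/positivity of $\rho$ are standard and short, and the ``only if'' direction, while requiring care, reduces to a single tail lower bound. By contrast, producing \emph{piecewise}-$C^2$, globally continuous Lyapunov functions whose drift decay is pushed all the way to the critical power $2n$ --- keeping the positive contribution of the instability under control in the boundary layer, matching the pieces across regions, and arranging the It\^o--Tanaka correction terms to cooperate rather than obstruct --- is the delicate core of the argument and is what occupies the bulk of the paper.
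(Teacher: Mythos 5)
Your proposal is correct and follows essentially the same route as the paper. The existence, uniqueness, ergodicity, non-explosion, and smoothness/positivity of the density all come from exhibiting piecewise-$C^2$, globally continuous Lyapunov pairs $(\Psi,\Psi^{1+\delta})$ and $(\Psi,|z|^\gamma)$ (Theorem~\ref{thm:lyapfunmadness}), fed through Peskir's It\^o--Tanaka formula and the abstract consequences of Section~\ref{sec:conseq_lyap} (Lemma~\ref{l:existInvM}, Theorem~\ref{LyapConverge}), while the ``only if'' half of the moment dichotomy is obtained from a pointwise lower bound $\rho(x,y)\geq c|(x,y)|^{-2n-2}$ on the invariant density (Theorem~\ref{thm:limit_bound}), which is equivalent to your tail lower bound $\mu(\{|z|\geq R\})\gtrsim R^{-2n}$. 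Two small clarifications are worth recording. First, the construction in Part I is carried out only under Assumption~\ref{assumption:nlot} (no significant lower-order terms); the full Lyapunov construction and the density lower bound are deferred to Part~II. Second, your intuition that the Lyapunov function is ``morally a decreasing function of the orbit index $K$'' has the sign reversed: in the deterministic region the paper's $\psi_1$ behaves like $K^p$, an \emph{increasing} function of $K$, and the Lyapunov property comes from the fact that $\psi_1$ decreases \emph{along the deterministic flow} (i.e.\ $T_1\psi_1<0$). The paper also organizes the local constructions somewhat more rigidly than your sketch suggests, by solving auxiliary PDEs with the asymptotic operators $T_1,T_2,A$ and arranging the $\psi_i$ to scale homogeneously under the appropriate $S^\lambda_\alpha$, but this is a refinement of the same idea rather than a different approach.
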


 Given the existence and uniqueness of the stationary measure, it is
 also natural to explore if initial distributions converge
 to it and, if so, to determine the rate of convergence. To see what
 happens in the present context, given any measurable function $w\colon \C
 \rightarrow [1,\infty)$, let $\mathcal{M}_w(\C)$ denote the set of probability measures $\nu$ on $\C$ satisfying $w\in L^1(\nu)$  
  and define the weighted total variation metric
 $d_w$ on $\mathcal{M}_w(\C)$ by
\begin{align*}
  \d_w(\nu_1,\nu_2)=\sup_{\substack{\phi:\C \rightarrow \R
      \\|\phi(z)|\leq w(z)} }\bigg[\int \phi(z)\, \nu_1(dz)- \int
  \phi(z) \, \nu_2(dz)\bigg].  
\end{align*}

\begin{theorem}\label{thm:convergence} Let $P_t$ denote the Markov
  semi-group corresponding to \eqref{eqn:polyZglot} and let $\alpha \in (0, n)$ be arbitrary.   Then there exists a function $\Psi \colon \C
  \rightarrow [0,\infty)$ and positive constants $c, d, K$ such that 
\begin{align*}
  c |z|^\alpha \leq \Psi(z) \leq d |z|^{\alpha + \frac{n}{2}+1}  
\end{align*}
for all $|z| \geq K$ and such that if $w(z)=1+\beta \Psi(z)$ for some $\beta >0$, then ${\nu P_t \in \mathcal{M}_w(\C)}$ for all $t>0$ and any probability measure $\nu$ on $\C$.  Moreover, with the same choice of $w$, there exist positive constants $C, \gamma$ such that for any two probability measures $\nu_1, \nu_2$ on $\C$ and any $t\geq 1$
  \begin{align*}
   d_w(\nu_1 P_t,\nu_2 P_t) \leq C e^{-\gamma t} \| \nu_1-\nu_2\|_{TV}.  
 \end{align*} 
\end{theorem}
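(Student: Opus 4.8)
\emph{Overall strategy.} The plan is to derive Theorem~\ref{thm:convergence} from the Harris-type ergodic theorem (see, e.g., \cite{MT_93}), whose hypotheses are a geometric drift (Lyapunov) condition together with a minorization on sublevel sets. The Lyapunov function $\Psi$ will be precisely the one produced by the construction carried out in Section~\ref{sec:theconstruction} and the sequel: for each $\alpha\in(0,n)$ it is globally continuous, piecewise $C^2$, satisfies the two-sided polynomial bound stated in the theorem, and — this is the substantive input, whose verification occupies the bulk of the paper — obeys a drift inequality of the form $\mathcal{L}\Psi\le-\varphi(\Psi)+C$ off a compact set, where $\varphi$ is increasing with $\varphi(s)/s\to\infty$ and $\int^{\infty}ds/\varphi(s)<\infty$ (so, morally, $\varphi(s)\asymp s^{1+\delta}$ for some $\delta=\delta(\alpha)>0$), the expression $\mathcal{L}\Psi$ being interpreted along the curves where $\Psi$ is not $C^2$ via Peskir's generalized It\^o--Tanaka formula \cite{Peskir_07} as explained in Section~\ref{sec:GenOver}. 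Throughout we use non-explosion and the existence of $\mu$, both supplied by Theorem~\ref{thm:decayStMeasure}.

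\emph{The two Harris ingredients.} First, since $\varphi(s)\ge c's$ for $s$ large, the inequality above yields $\mathcal{L}\Psi\le-c'\Psi+C'$ on all of $\C$; It\^o's formula with localization then gives $P_t\Psi\le e^{-c't}\Psi+C'/c'$, i.e.\ the geometric drift $P_tw\le\kappa^tw+b$ with $w=1+\beta\Psi$ and $\kappa=e^{-c'}<1$. Using instead the super-linear bound $\mathcal{L}\Psi\le-\varphi(\Psi)+C$ and comparing with the scalar ODE $\dot g=-\varphi(g)+C$ — whose solution issuing from $g(0^+)=+\infty$ is finite for every $t>0$ — yields the uniform-in-initial-condition estimate $\sup_{z\in\C}\E_z\Psi(z_t)=:g(t)<\infty$ for each $t>0$; in particular $\nu P_t\in\mathcal{M}_w(\C)$ for every probability measure $\nu$ and every $t>0$, with $\int w\,d(\nu P_t)\le 1+\beta g(t)$ \emph{independently of $\nu$}. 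Second, because $\Psi\ge c|z|^{\alpha}$ the sublevel sets $\{\Psi\le L\}$ are compact; since the generator is uniformly elliptic with smooth coefficients and the process is non-explosive, $P_{t_0}$ is strong Feller and a support theorem gives $\supp P_{t_0}(z,\cdot)=\C$ for each $z$, whence continuity in $z$ plus compactness produces, for every $L$, a constant $\delta_L>0$ and a probability measure $\zeta_L$ with $P_{t_0}(z,\cdot)\ge\delta_L\zeta_L$ for all $z\in\{\Psi\le L\}$.

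\emph{Assembling, and the $d_w$-versus-$TV$ refinement.} These are exactly the hypotheses of the Harris theorem for the time-$t_0$ skeleton chain, which produces constants $\widetilde C,\widetilde\gamma>0$ with $d_w(\mu_1P_{kt_0},\mu_2P_{kt_0})\le\widetilde C e^{-\widetilde\gamma kt_0}\,d_w(\mu_1,\mu_2)$ for all probability measures $\mu_1,\mu_2$; the drift bound interpolates this to all real times, giving $d_w(\mu_1P_s,\mu_2P_s)\le C_0e^{-\gamma s}d_w(\mu_1,\mu_2)$, and completeness of $(\mathcal{M}_w,d_w)$ also re-derives existence and uniqueness of $\mu$. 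To obtain the stated form, fix $t\ge1$, write $t=1+s$, and apply the contraction to $\mu_i:=\nu_iP_1$. Whenever $|\phi|\le w$ one has $|P_1\phi(z)|\le\E_z w(z_1)\le 1+\beta g(1)=:M$ by the uniform moment bound, so
\[
  d_w(\nu_1P_1,\nu_2P_1)=\sup_{|\phi|\le w}\int (P_1\phi)\,d(\nu_1-\nu_2)\le M\,\|\nu_1-\nu_2\|_{TV},
\]
and therefore $d_w(\nu_1P_t,\nu_2P_t)\le C_0e^{-\gamma(t-1)}d_w(\nu_1P_1,\nu_2P_1)\le\big(C_0Me^{\gamma}\big)e^{-\gamma t}\|\nu_1-\nu_2\|_{TV}$, which is the claimed inequality.

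\emph{Where the difficulty lies.} Everything above is soft once $\Psi$ and its super-linear drift estimate are in hand; the genuine work — the content of Sections~\ref{sec:theconstruction}--\ref{sec:thedetails} and of the sequel — is the construction of such a $\Psi$. This is delicate precisely because the deterministic field $a_{n+1}z^{n+1}$ points \emph{outward} along the $n$ explosive rays $\theta=2\pi k/n$, so $\Psi$ cannot be radial: near those rays it must encode the $\eta=\theta r^{(n+2)/2}$ scaling identified in the heuristic analysis in order to keep $\mathcal{L}\Psi$ negative definite there, while still growing like a power of $|z|$ everywhere. A secondary technical point, already anticipated in Section~\ref{sec:GenOver}, is that $\Psi$ is only piecewise $C^2$, which forces the systematic use of Peskir's formula so that no mollification along the gluing curves is needed.
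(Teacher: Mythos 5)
Your proposal is correct and follows essentially the same route as the paper: the paper implements exactly this Lyapunov-plus-Harris argument by combining Theorem~\ref{thm:lyapfunmadness} (the construction of $\Psi$ with $(\Psi,\Psi^{1+\delta})$ a Lyapunov pair and the two-sided polynomial bound, with $\alpha=\gamma-n$) with Theorem~\ref{LyapConverge}\,\ref{supcontraction}), whose proof in Appendix~\ref{appendixA} invokes Theorem~1.3 of \cite{HM_11} for the $d_w$-contraction, uses the ODE comparison $\dot g\le -mg^{1+\delta}+b$ to get the uniform-in-$\nu$ bound $\sup_{\xi_0}\E_{\xi_0}\Psi(\xi_t)\le K_{t_0}$ for $t\ge t_0>0$, and then passes from $d_w$ to total variation by running the chain for one unit of time, exactly as you do via $|P_1\phi(z)|\le\E_z w(z_1)\le M$. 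The only cosmetic differences are that the paper packages the minorization via the smooth strictly positive transition density rather than the phrase ``strong Feller plus support theorem,'' and cites Proposition~6.2 of \cite{AKM_12} for the step you re-derive in the final display.
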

\begin{remark}
In the statement of Theorem~\ref{thm:convergence}, $\|\nu_1-\nu_2\|_{TV}$ denotes the
total variation distance between the measures $\nu_1$ and $\nu_2$.  Note that $\|\nu_1-\nu_2\|_{TV} = d_w(\nu_1,\nu_2)$ when $w\equiv 1$.  
\end{remark}

\begin{remark}
It is interesting to note that all results stated in Theorem~\ref{thm:convergence} hold for a fixed but arbitrary noise intensity $\sigma >0$.  It is therefore natural to wonder how exponential convergence to the invariant probability measure depends on the parameter $\sigma>0$.  Although we will not be able to extract this dependence rigorously in the general system \eqref{eqn:polyZglot}, we will be able to in the monomial case \eqref{eqn:zn} in the total variation distance.

To see the dependence in the monomial case, let $z(t, \sigma, z_0)$ denote the solution of \eqref{eqn:zn} with initial condition $z_0\in \C$ and noise intensity $\sigma>0$.  Here, we naturally emphasize the dependence of the solution $z(t, \sigma, z_0)$ on the noise intensity $\sigma >0$ and the initial condition $z_0$ because we will rescale both space and time, making use of the homogeneity of the drift term $z_t^{n+1}$.  Now observe that 
\begin{align*}
\tilde{z}(t,\sigma, z_0) : = \sigma^{l} z(t \sigma^{l'}, 1, \sigma^{-l} z_0), \,\,\, l= \frac{2}{n+2}, \,\,\, l'= \frac{2n}{n+2}
\end{align*}
has the same distribution as $z(t, \sigma, z_0)$ for all times $t\geq 0$.  Define a probability measure $\pi^\sigma$ on the Borel subsets $A$ of $\C$ by $\pi^\sigma(A)= \pi^{1}(\sigma^{-l}A)$ where $\pi^{1}$ denotes the invariant probability measure corresponding to the process \eqref{eqn:zn} with noise intensity $1$ and suppose that $C, \gamma$ and $w$ in the statement of Theorem~\ref{thm:convergence} correspond to the case \eqref{eqn:zn} when the noise intensity is $1$.  Then we have by Theorem~\ref{thm:convergence} 
\begin{align*}
\| \P(z(t, \sigma, z_0) \in \cdot\,) - \pi^\sigma(\,\cdot\,) \|_{TV} &= \|\P(z(t \sigma^{l'}, 1, \sigma^{-l} z_0)\in \cdot \,) - \pi^{1}(\, \cdot \,) \|_{TV}\\
& \leq C e^{-\gamma  \sigma^{l'} t} \|\delta_{\sigma^{-l} z}- \pi^{1}\|_{TV}\\
& \leq  C e^{-\gamma  \sigma^{l'}t}= Ce^{-\gamma_\sigma t}    
\end{align*}
where $\gamma_\sigma:=\gamma \sigma^{l'}$.  In particular, $\pi^\sigma$ is the unique invariant probability measure corresponding to the process \eqref{eqn:zn} with noise intensity $\sigma$ and, moreover, when measuring the convergence to equilibrium in the total variation distance, the only constant that depends on $\sigma$ is $\gamma_\sigma$, and it is related to the constant $\gamma$ corresponding to the process $z(t, 1, z_0)$ via $\gamma_\sigma= \gamma \sigma^{l'}$.

For the general system \eqref{eqn:polyZglot}, this simple argument fails since the drift is no longer homogeneous under radial scalings.  In particular, in trying to replicate this argument, one cannot rescale time and/or space to arrive at a process independent of $\sigma$ like we were able to above.  Therefore, uncovering the dependence on $\sigma$ is more nuanced for the general system \eqref{eqn:polyZglot}.  Although we will not do it here or in the sequel, however, one could obtain bounds on $C$ and $\gamma$ in the statement of Theorem~\ref{thm:convergence} in terms of $\sigma$ by carefully tracking the dependence of our Lyapunov functions on $\sigma$ and applying the results of \cite{HM_11}.       
\end{remark}

\begin{remark} \label{fatAndFast}
  As mentioned in the introduction, an interesting feature of the system \eqref{eqn:genpoly} is that the equilibrium attracts all initial conditions exponentially fast yet the
  equilibrium density only decays polynomially at infinity (and
  not exponentially as one might expect in a system with additive noise).
  Due to the structure of the invariant density, this cannot happen in a gradient system with additive noise under a nominal uniform growth assumption on the potential. To see why, consider the following equation on $\R^k$
  \begin{align*}
    dX_t =& -\nabla V(X_t) dt + dW_t 
  \end{align*}
  where $W_t=(W_t^1, \ldots, W_t^k)$ is a $k$-dimensional Brownian motion and the potential $V\in C^2$ satisfies $V(x) \geq c|x|$ for $|x|\geq K$ for some $K>0$.  Since the invariant density is proportional to $e^{- c V(x)}$ for some $c>0$, we see that the invariant density decays exponentially at infinity.  If, for example, $V(x)=p \log(|x|)$ for some constant $p>0$ sufficently large (depending on the dimension), then the system posses a unique invariant density decays polynomially at infinity.  However, this implies that the drift in the above SDE is of the form $-p x /|x|^{2}$. Such drifts are known to have slow return times to the ``center" of the phase space and this leads to sub-exponential convergence to equilibrium. See \cite{KlokovVeretennikov2004} for a general example and \cite{Hairer2009,HM_09} for some further interesting examples.
\end{remark}


\section{Consequences of Lyapunov Structure and Implications for the
  Invariant Measure}
\label{sec:conseq_lyap}

Most of the results in Section~\ref{mainResults} turn on the existence
of a certain type of Lyapunov function corresponding to the dynamics \eqref{eqn:polyZglot}.  Because we require the additional flexibility, we make use of a slightly more general notion of a Lyapunov function than usually employed in the context of diffusion processes.  In this section, therefore, we will define what we mean by a Lyapunov function and give some results which follow from its existence.  Because it is simpler, in this section we will work more generally within the context of a time-homogeneous
It\^{o} diffusion $\xi_t$ on $\R^k$ with smooth $(C^\infty)$ coefficients.  Also, because it is not clear that $\xi_t$ exists for all finite times, we make use of the stopping times $\tau_n = \inf\{t>0 \, : \, |\xi_t|\geq n \}$, $n\in \N$.    
\begin{definition}
\label{def:Lyapunov}
Let $\Psi, \Phi: \R^k\rightarrow [0, \infty)$ be continuous. We call $(\Psi, \Phi)$
a \textsc{Lyapunov pair corresponding to} $\xi_t$ if:
  \begin{enumerate}
  \item $\Psi(\xi)\wedge \Phi(\xi) \rightarrow \infty$ as
    $|\xi|\rightarrow \infty$;
  \item\label{lyac:b}  There exists a locally bounded and measurable function ${g: \R^k \rightarrow \R}$ such that the following equality holds for all $\xi_0 \in \R^k$, $n\in \N$ and all bounded stopping times $\upsilon$:
  \begin{align*}
   \E_{\xi_0} \Psi(\xi_{\upsilon \wedge \tau_n}) = \Psi(\xi_0) +
   \E_{\xi_0}\int_0^{\upsilon \wedge \tau_n} g(\xi_s) \, ds + \text{Flux}(\xi_0, \upsilon, n)  
 \end{align*}
 where $\text{Flux}(\xi_0, \upsilon, n)\in (-\infty, 0]$ and
 $\text{Flux}(\xi_0, t, l) \leq \text{Flux}(\xi_0, s, n)$ for all
 $0\leq s\leq t$, $n\leq l$, $\xi_0 \in \R^k$.    The meaning of the
 flux term and the reason for its name is discussed in Remark~\ref{rem:C2}.

\item \label{lyac:c}There exist constants $m, b>0$ such that for all $\xi \in \R^k$  
  \begin{align*}
  g(\xi) \leq - m \Phi(\xi) + b.    
  \end{align*} 
    \end{enumerate}
  The function $\Psi$ in a Lyapunov pair $(\Psi, \Phi)$ is called a
  \textsc{Lyapunov function.  }
  \end{definition}

 \begin{remark}\label{rem:C2}
One usually requires $\Psi$ in Definition \ref{def:Lyapunov} to be globally $C^2$, in which case conditions \ref{lyac:b}) and \ref{lyac:c}) are replaced by the global bound
\begin{align*}
\mathscr{L} \Psi \leq - m \Phi + b  
\end{align*}
where $\mathscr{L}$ denotes the infinitesimal generator of $\xi_t$.
Since our Lyapunov function $\Psi$ will only be globally continuous,
we will need the more general formulation given above in order to make
use of an extension of Tanaka's formula due to Peskir
\cite{Peskir_07}.  A consequence of this extended formula is that one
is now permitted to take the It\^{o} differential of $\Psi(\xi_t)$
above so long as $\Psi$ is continuous on $\R^k$ and $\Psi$ is $C^2$
everywhere except on a finite collection of non-intersecting,
sufficiently smooth $(k-1)$-dimensional surfaces.  In such cases, if
$\Psi$ is not globally $C^2$, then the differential $d\Psi(\xi_t)$
contains a finite number of local time contributions, each of which
corresponds to the local time of the flux (in the normal direction) of
$\Psi$ on the boundary of a given surface where $\Psi$ is not $C^2$.
Hence, this is precisely why the term $\text{Flux}(\xi_0, \upsilon,
n)$ appears in the formula above.  For further discussions on this
topic, we refer the reader to \cite{Peskir_07} (see also
Section~\ref{subsec:Bas_structI} of this paper and
Section~\ref{II-sec:peskir} of Part II \cite{HerzogMattingly2013II}).      
\end{remark}

\begin{remark} 
Another way to think about the generalized notion of a Lyapunov function given here is that it affords the structure needed to work with certain types of weak (as opposed to classical) sub-solutions of the PDE $\mathscr{L}\Psi = -m \Phi +b$.  The flux term naturally arises when integrating by parts to show that $\Psi$ is indeed a weak sub-solution of $\mathscr{L}\Psi=-m \Phi + b$ and, for $\Psi$ to be a sub-solution, we need the flux term to be $\leq 0$.  
  \end{remark}

\begin{remark}
To keep this section concise, most of the proofs in this section will be given in Appendix~\ref{appendixA}.  Although each proof is a somewhat natural extension of results in the references  \cite{AKM_12, HM_09, HM_11, Has_12, MT_93}, special care is taken precisely because our Lyapunov function will not be globally $C^2$.  
\end{remark} 
 
The first result we state gives the basic consequences of the existence of a Lyapunov pair $(\Psi, \Phi)$.  
 
\begin{lemma}\label{l:existInvM}
If $\xi_t$ possesses a Lyapunov
  pair $(\Psi, \Phi)$, then the following conclusions hold:
  \begin{enumerate}
  \item \label{l:existInvMa}$\xi_t$ is non-explosive; that is, if $\tau_\infty = \lim_{n\rightarrow \infty} \tau_n$, then 
  \begin{align*}
  \P_{\xi_0} [\tau_\infty = \infty ]=1
  \end{align*}
  for all $\xi_0 \in \R^k$.  In particular, for every initial condition ${\xi_0 \in \R^k}$, $\xi_t$ is well-defined for all finite times $t\geq 0$ almost surely.  
  \item \label{l:existInvMb}$\xi_t$ has an invariant probability measure $\pi$ satisfying
\begin{align*}  
\quad \int_{\R^k} \Phi(\xi) \pi(d\xi) < \infty.  
    \end{align*}  
  \end{enumerate}
\end{lemma}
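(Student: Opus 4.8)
The plan is to extract both conclusions from the single identity in Definition~\ref{def:Lyapunov}\ref{lyac:b}), specialized to the constant (hence bounded) stopping time $\upsilon=t$, used together with the two sign conditions — $\mathrm{Flux}(\xi_0,\upsilon,n)\le 0$ and $g\le -m\Phi+b\le b$ (the last because $\Phi\ge 0$) — and the coercivity $\Psi\wedge\Phi\to\infty$. Writing the identity with $\upsilon=t$ and discarding the nonpositive flux term gives, for every $n\in\N$ and $t\ge 0$,
\[
\E_{\xi_0}\Psi(\xi_{t\wedge\tau_n})\;\le\;\Psi(\xi_0)+b\,t .
\]
This single a priori bound drives everything.

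For part~\ref{l:existInvMa}), the key observation is that on the event $\{\tau_n\le t\}$ path-continuity of the diffusion forces $|\xi_{t\wedge\tau_n}|=|\xi_{\tau_n}|=n$ (for $n>|\xi_0|$), so $\Psi(\xi_{t\wedge\tau_n})\ge\psi_n:=\inf_{|\xi|\ge n}\Psi(\xi)$, and $\psi_n\to\infty$ since $\Psi\wedge\Phi\to\infty$. Because $\Psi\ge 0$, a Markov-type inequality then yields $\P_{\xi_0}[\tau_n\le t]\le(\Psi(\xi_0)+bt)/\psi_n$. Letting $n\to\infty$ (using $\tau_n\uparrow\tau_\infty$, so that $\{\tau_\infty\le t\}=\bigcap_n\{\tau_n\le t\}$) gives $\P_{\xi_0}[\tau_\infty\le t]=0$ for every $t$, and then $t\to\infty$ gives $\P_{\xi_0}[\tau_\infty=\infty]=1$.

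For part~\ref{l:existInvMb}), I would instead keep the $-m\Phi$ term and discard the nonnegative quantity $\E_{\xi_0}\Psi(\xi_{t\wedge\tau_n})$ on the left, obtaining
\[
m\,\E_{\xi_0}\!\int_0^{t\wedge\tau_n}\!\Phi(\xi_s)\,ds\;\le\;\Psi(\xi_0)+b\,t .
\]
By part~\ref{l:existInvMa}), $t\wedge\tau_n\uparrow t$ almost surely, so monotone convergence (the integrand is $\ge 0$) and Tonelli let me send $n\to\infty$ to get $m\int_0^t (P_s\Phi)(\xi_0)\,ds\le\Psi(\xi_0)+bt$. Fixing $\xi_0$ and forming the occupation measures $\mu_t:=\tfrac1t\int_0^t P_s(\xi_0,\cdot)\,ds$, this reads $\int_{\R^k}\Phi\,d\mu_t\le \Psi(\xi_0)/(mt)+b/m$. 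Since $\Phi$ is continuous with $\Phi(\xi)\to\infty$, the sublevel sets $\{\Phi\le R\}$ are compact, so $\mu_t(\{\Phi>R\})\le R^{-1}\int\Phi\,d\mu_t$ shows $\{\mu_t:t\ge 1\}$ is tight; by Prokhorov there is a subsequence $t_k\to\infty$ with $\mu_{t_k}\Rightarrow\pi$. The standard Krylov--Bogoliubov computation then shows $\pi P_t=\pi$ for all $t\ge 0$, and finally lower semicontinuity of $\nu\mapsto\int\Phi\,d\nu$ under weak convergence gives $\int_{\R^k}\Phi\,d\pi\le\liminf_k\int\Phi\,d\mu_{t_k}\le b/m<\infty$.

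The argument is essentially classical; the genuine care is bookkeeping forced by the merely-continuous Lyapunov framework: the stopping times $\tau_n$ and the flux term appear precisely because $\Psi$ need not be $C^2$, but the hypotheses $\mathrm{Flux}\le 0$ and $\mathrm{Flux}(\xi_0,t,l)\le\mathrm{Flux}(\xi_0,s,n)$ are tailored exactly so that discarding the flux and passing $n\to\infty$ inside the expectation both go through. The one step that deserves more than a line of justification is the Krylov--Bogoliubov identification of $\pi$ as invariant, which requires the Feller property of $P_t$; here that follows from continuous dependence of the (now known non-explosive) SDE on its initial condition, valid since the coefficients are smooth, and I would either cite standard SDE well-posedness or, for self-containedness, derive it via a Gronwall estimate run up to the exit times $\tau_n$ combined with non-explosion. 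I expect this to be the only place where a reader might want a careful reference.
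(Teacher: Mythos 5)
Your proposal is correct and follows essentially the same route as the paper: the a priori bound $\E_{\xi_0}\Psi(\xi_{t\wedge\tau_n})\le\Psi(\xi_0)+bt$ gives non-explosivity via a Markov-type inequality, and the companion bound on $\E_{\xi_0}\int_0^{t\wedge\tau_n}\Phi(\xi_s)\,ds$ gives tightness of the Ces\`aro averages and hence Krylov--Bogoliubov. The only cosmetic differences are that you appeal to lower semicontinuity of $\nu\mapsto\int\Phi\,d\nu$ where the paper truncates $\Phi\wedge R$ and passes to the limit by monotone convergence, and you helpfully make explicit the Feller-property ingredient of Krylov--Bogoliubov, which the paper leaves implicit by citing the literature.
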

\begin{proof}
See Appendix~\ref{appendixA}.         
\end{proof}

Motivated by the setup in the previous section, if $w: \R^k \rightarrow [1, \infty)$ is measurable, we let $\mathcal{M}_w(\R^k)$ denote the set of probability measures $\nu$ on $\R^k$ satisfying ${w\in L^1(\nu)}$ and define a metric $d_w$ on $\mathcal{M}_w(\R^k)$ by  
\begin{align*}
d_w(\nu_1, \nu_2) = \sup_{|\varphi| \leq w}\bigg[ \int_{\R^k} \varphi(\xi) \nu_1(d\xi)-\int_{\R^k}\varphi(\xi) \nu_2(d\xi)\bigg]
\end{align*}
where the supremum is taken over $\varphi: \R^k \rightarrow \R$ measurable satisfying the global bound $|\varphi|\leq w$.  We also have the following results which give uniqueness of the invariant measure and characterize the convergence rate of the process $\xi_t$ to this equilibrium.

\begin{theorem}\label{LyapConverge}  Suppose that $\xi_t$ has a uniformly elliptic diffusion matrix and a Lyapunov pair $(\Psi, \Phi)$, and let $\mathscr{P}_t$ denote the Markov semigroup corresponding to $\xi_t$.  Then the following conclusions also hold:
 \begin{enumerate}
  \item\label{LyapunovConvergea} $\xi_t$ possesses a unique invariant probability measure $\pi$.  Moreover, $\pi$ is ergodic, satisfies 
  \begin{align*}
  \int_{\R^k} \Phi(\xi) \, \pi(d\xi) < \infty,  
  \end{align*}
   and has a smooth and everywhere positive density with respect to Lebesgue measure on $\R^k$.
  \item\label{contraction} If $\Phi=\Psi$ and $w(\xi)=1+\beta \Psi(\xi)$ for some $\beta >0$, then there exist constants $C, \eta>0$ such that 
    \begin{align*}
      d_w(\nu_1 \mathscr{P}_t,\nu_2 \mathscr{P}_t) \leq C e^{-\eta t} d_w(\nu_1,\nu_2)
    \end{align*}
    for all times $t\geq 0$ and all $\nu_1, \nu_2 \in \mathcal{M}_w(\R^k)$.     
  \item\label{supcontraction} If $\Phi=\Psi^{1+\delta}$ for some $\delta>0$ and $w(\xi)=1+\beta \Psi(\xi)$ for some $\beta >0$, then the conclusion in part \ref{contraction}) also holds.  Moreover, $\nu \mathscr{P}_t \in \mathcal{M}_w(\R^k)$ for $t>0$ and any probability measure $\nu$ on $\R^k$ and there exist positive constants $\tilde{C}, \tilde{\eta}$ such that 
  \begin{align*}
  d_w(\nu_1 \mathscr{P}_t, \nu_2 \mathscr{P}_t) \leq \tilde{C} e^{-\tilde{\eta} t} \| \nu_1 - \nu_2 \|_{TV}
  \end{align*} 
  for all $t\geq 1$ and all probability measures $\nu_1, \nu_2$ on $\C$.  
  \end{enumerate}
\end{theorem}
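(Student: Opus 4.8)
\emph{Strategy and Part \ref{LyapunovConvergea}).} All three parts rest on combining the existence statement of Lemma~\ref{l:existInvM} with the two standard ingredients of Harris-type ergodic theory — a geometric drift estimate at the level of the semigroup and a minorization on sublevel sets of $\Psi$ — the latter being available because the diffusion matrix is uniformly elliptic with $C^\infty$ coefficients. For Part \ref{LyapunovConvergea}), Lemma~\ref{l:existInvM} already supplies non-explosion and an invariant probability measure $\pi$ with $\int_{\R^k}\Phi\,d\pi<\infty$, so only uniqueness, ergodicity, and regularity of the density remain. Uniform ellipticity and smoothness of the coefficients imply that for each $t>0$ the transition kernel $\mathscr{P}_t(\xi,\cdot)$ has a density $p_t(\xi,y)$ that is jointly smooth and strictly positive on $\R^k\times\R^k$ (parabolic regularity together with Gaussian-type lower bounds, localized to accommodate the unbounded drift). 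Hence $\mathscr{P}_t$ is strong Feller and topologically irreducible for every $t>0$, so by Doob's theorem (see \cite{Has_12,MT_93}) $\pi$ is the unique invariant probability measure and is ergodic; writing $\rho$ for its density and using $\rho(y)=\int p_t(\xi,y)\,\pi(d\xi)$, one differentiates under the integral to get $\rho\in C^\infty$, and $\rho(y)>0$ for every $y$ since the integrand is strictly positive and $\pi\neq 0$.

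\emph{Geometric drift and Part \ref{contraction}).} From Definition~\ref{def:Lyapunov} I first extract a semigroup drift inequality. Applying condition \ref{lyac:b}) with the integrating factor $e^{mt}$ (Dynkin's formula in the form used for Lemma~\ref{l:existInvM} in Appendix~\ref{appendixA}; the flux contribution is nonpositive by the sign and monotonicity requirements in \ref{lyac:b})), using $m\Psi+g\le -m(\Phi-\Psi)+b$ from \ref{lyac:c}), and then sending $n\to\infty$ with non-explosion together with Fatou/monotone convergence, one obtains, whenever $\Phi\ge\Psi-c$ pointwise,
\[
\mathscr{P}_t\Psi\le e^{-mt}\Psi+b',\qquad b':=\tfrac{b+mc}{m},
\]
and hence $\mathscr{P}_{t_0}w\le\kappa_0 w+K_0$ with $\kappa_0:=e^{-mt_0}\in(0,1)$, $K_0:=\beta b'$, for any fixed $t_0>0$ and $w=1+\beta\Psi$; for $\Phi=\Psi$ this holds with $c=0$. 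Since $\Psi\to\infty$, the set $\{\Psi\le R\}$ is compact, and continuity and positivity of $p_{t_0}$ give $\alpha\in(0,1]$ and a probability measure $\nu_*$ with $\mathscr{P}_{t_0}(\xi,\cdot)\ge\alpha\nu_*$ for all $\xi\in\{\Psi\le R\}$. Taking $R$ large (so $R>2K_0/(1-\kappa_0)$) and $\beta>0$ small, Harris' theorem \cite{HM_11} (see also \cite{MT_93,Has_12}) yields $\lambda\in(0,1)$ with $d_w(\nu_1\mathscr{P}_{t_0},\nu_2\mathscr{P}_{t_0})\le\lambda\,d_w(\nu_1,\nu_2)$ on $\mathcal{M}_w(\R^k)$; writing a general time as $t=kt_0+r$, $r\in[0,t_0)$, the crude bound $\mathscr{P}_rw\le(1+b'\beta)w$ gives $d_w(\nu_1\mathscr{P}_r,\nu_2\mathscr{P}_r)\le(1+b'\beta)\,d_w(\nu_1,\nu_2)$, and composing the two estimates produces $d_w(\nu_1\mathscr{P}_t,\nu_2\mathscr{P}_t)\le Ce^{-\eta t}d_w(\nu_1,\nu_2)$ with $\eta=-t_0^{-1}\log\lambda$, which is Part \ref{contraction}).

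\emph{Part \ref{supcontraction}).} Here $\Phi=\Psi^{1+\delta}$. Since $\Psi^{1+\delta}\ge\Psi-1$ pointwise, $(\Psi,\Psi)$ is again a Lyapunov pair, so the contraction of Part \ref{contraction}) holds verbatim. For the remaining assertions I use the super-linear dissipation: the computation above, now combined with Jensen's inequality for the convex map $s\mapsto s^{1+\delta}$ (applied after the $\tau_n\to\infty$ limit), shows that $u(t):=\mathscr{P}_t\Psi(\xi)$ satisfies $u(t)\le u(0)+bt-m\int_0^t u(s)^{1+\delta}\,ds$, and comparing with the scalar ODE $\dot v=b-mv^{1+\delta}$ issued from $v(0^+)=+\infty$ gives $u(t)\le v(t)\le C_\delta(1+t^{-1/\delta})$ with $C_\delta$ independent of $\xi$. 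Consequently $\int w\,d(\nu\mathscr{P}_t)=1+\beta\int\mathscr{P}_t\Psi\,d\nu<\infty$ for every $t>0$ and every probability measure $\nu$, so $\nu\mathscr{P}_t\in\mathcal{M}_w(\R^k)$. Finally, fix $t\ge 1$; the bound is trivial if $\|\nu_1-\nu_2\|_{TV}=0$, and otherwise set $a=\tfrac12\|\nu_1-\nu_2\|_{TV}$ and let $\hat\nu_1,\hat\nu_2$ be the normalized positive and negative parts of $\nu_1-\nu_2$, so $\nu_1\mathscr{P}_t-\nu_2\mathscr{P}_t=a(\hat\nu_1\mathscr{P}_t-\hat\nu_2\mathscr{P}_t)$. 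Writing $\mathscr{P}_t=\mathscr{P}_{1/2}\mathscr{P}_{t-1/2}$, the moment bound gives $\int w\,d(\hat\nu_i\mathscr{P}_{1/2})\le M$ uniformly, so Part \ref{contraction}) yields $d_w(\hat\nu_1\mathscr{P}_t,\hat\nu_2\mathscr{P}_t)\le Ce^{-\eta(t-1/2)}d_w(\hat\nu_1\mathscr{P}_{1/2},\hat\nu_2\mathscr{P}_{1/2})\le 2MC e^{-\eta(t-1/2)}$, whence $d_w(\nu_1\mathscr{P}_t,\nu_2\mathscr{P}_t)=a\,d_w(\hat\nu_1\mathscr{P}_t,\hat\nu_2\mathscr{P}_t)\le\tilde Ce^{-\tilde\eta t}\|\nu_1-\nu_2\|_{TV}$, which is Part \ref{supcontraction}).

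\emph{Main obstacle.} The Harris machinery is routine once a drift condition and a minorization are available; the genuinely delicate point is obtaining the semigroup estimates $\mathscr{P}_t\Psi\le e^{-mt}\Psi+b'$ and $\mathscr{P}_t\Psi\le C_\delta(1+t^{-1/\delta})$ directly from the generalized, only piecewise-$C^2$ notion of Lyapunov pair in Definition~\ref{def:Lyapunov}: one must justify the Dynkin/integrating-factor manipulations in the presence of the local-time (flux) terms of Remark~\ref{rem:C2}, carry out the $\tau_n$-truncation limit together with Jensen's inequality, and, for the minorization, localize the elliptic lower bounds since the drift is unbounded. These are precisely the points treated in Appendix~\ref{appendixA} (and, for the local-time terms, rest on \cite{Peskir_07}); I would invoke the corresponding lemmas there rather than reproduce them.
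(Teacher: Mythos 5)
Your proposal is correct and follows essentially the same architecture as the paper: part~\ref{LyapunovConvergea}) via Lemma~\ref{l:existInvM} plus uniform ellipticity, parts~\ref{contraction}) and~\ref{supcontraction}) via a semigroup drift estimate fed into the weighted Harris theorem of \cite{HM_11}, the observation that $(\Psi,\Psi^{1+\delta})$ implies the weaker pair $(\Psi,\Psi)$, the uniform bound $\mathscr{P}_t\Psi\le K_{t_0}$ from the super-linear dissipation, and the TV-to-$d_w$ upgrade along the lines of \cite{AKM_12}. The only small divergence is your derivation of $\mathscr{P}_t\Psi\le e^{-mt}\Psi+b'$ via an integrating-factor (Gr\"onwall) manipulation inside the generalized Dynkin formula; the paper instead first proves the raw integral inequality $\E_{\xi_0}\Psi(\xi_t)-\E_{\xi_0}\Psi(\xi_s)\le\E_{\xi_0}\int_s^t g(\xi_u)\,du$ together with right-continuity and the absence of upward jumps (Lemma~\ref{lem:gronwcomp}) and then invokes a deterministic ODE comparison (Proposition~\ref{prop:odecomp}). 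The paper's route sidesteps any Stieltjes integration against the flux term and, importantly, the comparison lemma is stated for an arbitrary non-increasing $f$, which is precisely what is needed again for the nonlinear drift $-m\Psi^{1+\delta}+b$ in part~\ref{supcontraction}); the linear integrating factor would not cover that case, so you would in any event end up proving something equivalent to Proposition~\ref{prop:odecomp}.
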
 
\begin{remark}
  The uniform ellipticity assumption is
  not needed for many of the results to hold. However, it simplifies
  our discourse significantly. See \cite{AKM_12,GHW_11} for examples
  considering degenerate noise in setting similar to this paper. 
\end{remark}

\begin{proof}[Proof of Theorem~\ref{LyapConverge}]
The existence in part \ref{LyapunovConvergea}) follows from the
  previous lemma.  Uniqueness of $\pi$ and the existence of a smooth
  and everywhere positive density are well-known consequences of
  uniform ellipticity of the diffusion matrix and the fact that $\xi_t$ satisfies an SDE with smooth coefficients.  The property
  \begin{align*}
  \int_{\R^k} \Phi(\xi) \, \pi(d \xi) <\infty
  \end{align*}  
  follows by uniqueness of the invariant probability measure and by Lemma \ref{l:existInvM} \ref{l:existInvMb}).  Parts \ref{contraction}) and \ref{supcontraction}) of the result are proven in Appendix~\ref{appendixA}.    
\end{proof}

As we saw in the previous lemma, if $\xi_t$ possesses a Lyapunov pair
of the form $(\Psi, \Psi^{1+\delta})$ for some $\delta >0$, then the
standard geometric ergodicity bound given in part \ref{contraction})
can be improved in the sense that the right-hand side no longer
depends on the initial state for $t\geq 1$.  This is also reflected in the following
theorem, as return times to large compact sets are small and independent
of where the process $\xi_t$ starts.

\begin{theorem}
\label{thm:entrance_time}
Suppose that $\xi_t$ has a Lyapunov pair $(\Psi, \Psi^{1+\delta})$ for some $\delta >0$ and that the diffusion matrix corresponding to $\xi_t$ is uniformly elliptic. Define ${\upsilon_\gamma =\inf\{t>0 \, : \, |\xi_t| \leq \gamma \}}$ for $\gamma>0$.  Then for each $\gamma >0$ sufficiently large
\begin{align*}
\inf_{\xi_0 \in \R^k}\P_{\xi_0} [\upsilon_\gamma<\infty]=1.   
\end{align*}
Moreover, for each $t, \epsilon>0$ there exists $\gamma >0$ large enough so that 
\begin{align*}
\sup_{\xi_0 \in \R^k}\P_{\xi_0}[\upsilon_\gamma \geq t ]\leq \epsilon.    
\end{align*}
\end{theorem}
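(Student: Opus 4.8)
The plan is to derive both claims from the Lyapunov inequality by a stopped supermartingale argument together with an entrance-time estimate for the one-dimensional norm process, combined with the uniform ellipticity to handle the small-radius behaviour. First I would use condition \ref{lyac:b}) of Definition~\ref{def:Lyapunov} with $\Phi=\Psi^{1+\delta}$ and condition \ref{lyac:c}) to write, for any bounded stopping time $\upsilon$ and any $n$,
\begin{align*}
\E_{\xi_0}\Psi(\xi_{\upsilon\wedge\tau_n}) \le \Psi(\xi_0) - m\,\E_{\xi_0}\int_0^{\upsilon\wedge\tau_n}\Psi(\xi_s)^{1+\delta}\,ds + b\,\E_{\xi_0}[\upsilon\wedge\tau_n],
\end{align*}
using that the flux term is nonpositive. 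Since $\xi_t$ is non-explosive by Lemma~\ref{l:existInvM}\ref{l:existInvMa}), I can let $n\to\infty$ along $\upsilon\wedge t$ and use Fatou/monotone convergence to obtain a genuine stopped inequality on $[0,t]$. The key analytic point is that the super-linear dissipation $-m\Psi^{1+\delta}$ forces $\Psi(\xi_t)$ to come down to an $O(1)$ level in $O(1)$ time regardless of its starting value: if $\Psi(\xi_s)$ were to stay above a large threshold $L$ on a time interval of length $t_0$, the integral term would contribute $-m L^{1+\delta} t_0$, which for $L$ large dominates $\Psi(\xi_0)$ only if... — more precisely I would run the comparison ODE $\dot u = -m u^{1+\delta}+b$, whose solutions from any initial value enter a fixed neighbourhood of the equilibrium $u_\infty=(b/m)^{1/(1+\delta)}$ within a time bounded uniformly in the initial condition, and transfer this to $\Psi(\xi_t)$ via the supermartingale estimate (this is the standard argument behind the ``uniform-in-initial-condition'' bounds of \cite{HM_09,HM_11} and is the reason the $(\Psi,\Psi^{1+\delta})$ structure gives something stronger than $(\Psi,\Psi)$).

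Concretely, for the second (quantitative) claim I would fix $t,\epsilon>0$ and argue: by the comparison argument there is a level $L=L(t,\epsilon)$ and a time $t_1=t_1(t,\epsilon)<t$ such that $\sup_{\xi_0}\P_{\xi_0}[\Psi(\xi_{t_1})>L]\le\epsilon/2$ — note this is uniform in $\xi_0$ precisely because the ODE bound on the return time does not see the initial value when $\delta>0$. Since $\Psi(\xi)\wedge\Psi(\xi)^{1+\delta}\to\infty$ as $|\xi|\to\infty$, the sublevel set $\{\Psi\le L\}$ is contained in some ball $\{|\xi|\le\gamma_0\}$. Thus with probability at least $1-\epsilon/2$, uniformly in $\xi_0$, the process is within $\{|\xi|\le\gamma_0\}$ at time $t_1$. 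It then remains to get from ``inside $\{|\xi|\le\gamma_0\}$ at time $t_1$'' to ``$\upsilon_\gamma<t$'' for a suitable $\gamma$: here I would invoke uniform ellipticity and the strong Markov property. Since $\{|\xi|\le\gamma_0\}$ is compact and the diffusion is non-degenerate with smooth coefficients, the transition density is bounded below on $\{|\xi|\le\gamma_0\}\times\{|\xi|\le\gamma\}$ over the time interval $[t_1,t]$ for any $\gamma>0$, so there is $p>0$ (depending on $\gamma,\gamma_0,t-t_1$ but not on the starting point inside the ball) with $\inf_{|x|\le\gamma_0}\P_x[|\xi_{t-t_1}|\le\gamma]\ge p$; iterating this hitting estimate (or just applying it once and noting we only need to reach $\{|\xi|\le\gamma\}$ at \emph{some} time $\le t$, which is implied by being there at a fixed time) gives $\sup_{\xi_0}\P_{\xi_0}[\upsilon_\gamma\ge t]\le\epsilon/2+\text{(something small)}\le\epsilon$ after possibly enlarging $\gamma$. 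Choosing $\gamma$ large enough that $\{|\xi|\le\gamma_0\}\subset\{|\xi|<\gamma\}$ as well makes the elliptic step essentially automatic.

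For the first claim, $\inf_{\xi_0}\P_{\xi_0}[\upsilon_\gamma<\infty]=1$ for $\gamma$ large, I would either take $\epsilon\to0$ with $t\to\infty$ in the quantitative statement, or argue directly: the supermartingale-type inequality above shows $\E_{\xi_0}\Psi(\xi_{t\wedge\upsilon_\gamma})$ stays bounded while $\int_0^{t\wedge\upsilon_\gamma}\Psi(\xi_s)^{1+\delta}ds$ has bounded expectation, and on $\{\upsilon_\gamma=\infty\}$ the process stays in $\{|\xi|>\gamma\}$ where $\Psi\ge c_\gamma\to\infty$; letting $t\to\infty$ forces $\P_{\xi_0}[\upsilon_\gamma=\infty]$ to shrink, and a Borel–Cantelli / geometric-trials argument using the uniform hitting probability $p$ from the elliptic step upgrades ``hit with positive probability in a bounded time'' to ``hit eventually almost surely, uniformly in $\xi_0$.''

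The main obstacle I expect is making the ``uniform-in-initial-condition'' return-time bound fully rigorous from the weaker (piecewise-$C^2$, flux-corrected) Lyapunov inequality: one must justify passing to the limit $n\to\infty$ in the stopped identity of Definition~\ref{def:Lyapunov}\ref{lyac:b}) using non-explosivity and the monotonicity of the flux term, and then run the nonlinear ODE comparison carefully enough that the resulting time $t_1$ and level $L$ genuinely do not depend on $\xi_0$. The gluing to the elliptic small-ball estimate is routine once that is in place, but it does require citing the local lower bound on the transition density for uniformly elliptic diffusions with smooth coefficients (as used in the proofs of Theorem~\ref{LyapConverge}), which I would import from the same references.
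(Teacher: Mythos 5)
Your core idea is exactly the paper's: the super-linear dissipation $-m\Psi^{1+\delta}$, fed through the ODE comparison $\dot u = -mu^{1+\delta}+b$, yields a moment bound $\sup_{\xi_0\in\R^k}\E_{\xi_0}\Psi(\xi_t)\le K_{t_0}$ for all $t\ge t_0>0$ that is \emph{uniform in the initial condition}, and both claims follow from this. The paper packages the comparison step as Lemma~\ref{lem:gronwcomp} (a version of the stopped inequality with the flux term dropped and the $n\to\infty$ limit handled via non-explosivity and Fatou) together with Proposition~\ref{prop:odecomp}, and then establishes the uniform bound inside the proof of Theorem~\ref{LyapConverge}\,\ref{supcontraction}) --- exactly the ``main obstacle'' you flagged.

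Where you diverge is in what you do once you have that bound, and your route contains an unnecessary detour. For the quantitative second claim, having chosen $L$ and $t_1<t$ so that $\sup_{\xi_0}\P_{\xi_0}[\Psi(\xi_{t_1})>L]\le\epsilon$, and $\gamma_0$ so that $\{\Psi\le L\}\subset\{|\xi|\le\gamma_0\}$, you already have that on $\{\Psi(\xi_{t_1})\le L\}$ the event $\upsilon_{\gamma}\le t_1<t$ holds for any $\gamma\ge\gamma_0$; there is nothing left to prove, and the elliptic/strong-Markov/small-ball step is not merely ``essentially automatic'' but vacuous. (Uniform ellipticity plays no role in the paper's proof of this theorem.) The paper is also slicker in not introducing $t_1$ and $L$ at all: on $\{\upsilon_\gamma\ge t\}$ the path has not yet entered $\{|\xi|\le\gamma\}$, so by path continuity $|\xi_t|\ge\gamma$ and hence $\Psi(\xi_t)\ge\inf_{|\xi|\ge\gamma}\Psi$; applying this inside $\E_{\xi_0}[\Psi(\xi_t)\mathbf{1}_{\upsilon_\gamma\ge t}]\le K$ gives $\P_{\xi_0}[\upsilon_\gamma\ge t]\le K/\inf_{|\xi|\ge\gamma}\Psi$ directly. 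For the first claim, the paper argues independently of the uniform bound: pick $\gamma$ so large that $-m\Psi^{1+\delta}+b\le-1$ on $\{|\xi|\ge\gamma\}$, then the stopped inequality gives $\P_{\xi_0}[\upsilon_\gamma\wedge\tau_n\ge t]\le\Psi(\xi_0)/(\inf_{|\xi|\ge\gamma}\Psi+t)$, and letting $n\to\infty$ then $t\to\infty$ forces $\P_{\xi_0}[\upsilon_\gamma=\infty]=0$ for each fixed $\xi_0$ (note this estimate is \emph{not} uniform in $\xi_0$, and does not need to be, since the claim is only that the infimum of a constant family of $1$'s equals $1$). Your alternative --- iterate the uniform second-claim estimate with the Markov property to get a geometric bound $\P_{\xi_0}[\upsilon_\gamma\ge n]\le(1/2)^n$ --- also works and is in fact uniform, but it is heavier machinery than the paper uses.
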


\begin{proof}
See Appendix~\ref{appendixA}.  
\end{proof}


\section{Proving the Main Results: An Outline} 
\label{sec:mainResultsOutline}

We now give two results which, when combined with the results of the previous section, will yield all of main results of this paper and the sequel \cite{HerzogMattingly2013II}.  Theorem~\ref{thm:lyapfunmadness} below will provide the needed Lyapunov pair, allowing us to apply the results of Section~\ref{sec:conseq_lyap}. And, Theorem~\ref{thm:limit_bound} below will give the required lower bound on the density of the invariant probability measure whose existence will be now ensured by Theorem~\ref{thm:lyapfunmadness}.

First notice we may assume, without loss of generality, that $a_{n+1}=1$ in \eqref{eqn:polyZglot}.  Indeed, the system can be rescaled and rotated so that it is one and, since any rotation of $B_t$ is also a complex Brownian motion, the resulting system will be of the form \eqref{eqn:polyZglot} but with $a_{n+1}=1$.  Hence for the remainder of the paper we will assume that  \eqref{eqn:polyZglot} takes the form 
\begin{align}
\label{eqn:polyZ}
dz_t = [z_t^{n+1} + F(z_t, \bar{z}_t)] \, dt + \sigma \, dB_t
\end{align}
where $n\geq 1$, $F$, $\sigma$ and $B_t$ are as in equation \eqref{eqn:polyZglot}. By a simple change of variables, all results translate easily to the general system \eqref{eqn:polyZglot}.

\begin{theorem}
\label{thm:lyapfunmadness}
For each $\gamma \in (n, 2n)$ and $\delta=\delta_\gamma >0$
sufficiently small, there exist a function $\Psi$ so that $(\Psi,
\Psi^{1+\delta})$ and $(\Psi, |z|^\gamma)$ are  Lyapunov pairs  corresponding to the
dynamics \eqref{eqn:polyZ}.  Moreover, $\Psi$ satisfies the following bounds for $|z| \geq K$
\begin{align*}
 c|z|^{\gamma-n}\leq \Psi(z) \leq d|z|^{\gamma-n +\frac{n}{2}+1}
\end{align*}   
for some positive constants $c,d, K$.    
\end{theorem}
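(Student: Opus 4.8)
The plan is to build $\Psi$ by patching together explicit pieces in polar coordinates $z=re^{i\theta}$, exploiting the $n$-fold rotational symmetry of \eqref{eqn:polyZ} — since $z\mapsto e^{2\pi ik/n}z$ conjugates the dynamics to itself up to replacing $B_t$ by another complex Brownian motion — to reduce to constructing $\Psi$ on the principal wedge $\{|\theta|\le\pi/n\}$, symmetric in $\theta$, and then extending by rotation; on a large disc about the origin $\Psi$ is simply a constant. As this is Part~I, I invoke the simplifying hypothesis of Section~\ref{sec:theconstruction} that the lower-order drift $F=\mathcal{O}(|z|^n)$ is ``insignificant'', so that $F$ enters $\mathscr{L}\Psi$ only at order $|z|^{\gamma-1}$ away from the escape rays (and at a controlled order in the boundary layer) and is dominated by the target $|z|^\gamma$; the general case, where $F$ genuinely distorts the dynamics at infinity, is the business of the sequel. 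Fix $\gamma\in(n,2n)$ and put $s:=\gamma-n\in(0,n)$.

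Within the principal wedge and for $r$ large I partition according to the sign of the deterministic radial drift $\dot r=r^{n+1}\cos(n\theta)$. On the \emph{safe} region $\mathcal{G}$ where $\cos(n\theta)$ is bounded away from $0$ and negative (roughly $\tfrac{\pi}{2n}<|\theta|\le\tfrac{\pi}{n}$, bounded away from $\tfrac{\pi}{2n}$) I take $\Psi=|z|^{s}$, so $\mathscr{L}\Psi=s\cos(n\theta)|z|^\gamma+\mathcal{O}(|z|^{\gamma-1})\le-m|z|^\gamma+b$; this already fixes the lower exponent $s=\gamma-n$. Near the escape ray $\theta=0$ the radial drift is outward and must be beaten by the transversal drift $\dot\theta=r^n\sin(n\theta)$, which repels $\theta$ from $0$. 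Here I use two further pieces. On the intermediate region $\mathcal{D}$ between $\mathcal{G}$ and the boundary layer, the orbit parameter $K=r|\sin(n\theta)|^{-1/n}$ is an integral of $\dot z=z^{n+1}$ (it indexes the loops of Figure~\ref{fig:orbits}), and I take $\Psi\asymp r^{s}|\sin(n\theta)|^{-1}$, normalized to agree with $|z|^s$ where $|\sin(n\theta)|\asymp1$; feeding $\dot\theta\,\partial_\theta\Psi$ into $\mathscr{L}$ (the second-order term being subdominant once $\eta^*$ is large) gives $\mathscr{L}\Psi\le-c\,r^n\Psi+b\le-m|z|^\gamma+b$. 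On the boundary layer $\mathcal{S}(\eta^*,r^*)=\{\,|\theta|r^{(n+2)/2}\le\eta^*,\ r\ge r^*\,\}$, where the noise matters, I pass to $\eta=\theta r^{(n+2)/2}$ and the time change $t\mapsto\int_0^tr_s^n\,ds$; by \eqref{eq:approxEtaR} the $(\eta,r)$-dynamics is then a small perturbation of $d\eta=(\tfrac32n+1)\eta\,dt+\sigma\,dW,\ \dot r=r$. Taking $\Psi=r^{s+(n+2)/2}h(\eta)$ with $h$ smooth, even, positive and Gaussian-type, $h(\eta)=e^{-\kappa\eta^2}$ suitably truncated near $|\eta|=\eta^*$, the requirement (after undoing the time change, which only multiplies the estimate by $r^n\ge0$) reduces to $(s+\tfrac{n+2}{2})h+(\tfrac32n+1)\eta h'+\tfrac{\sigma^2}{2}h''\le-m'<0$ on $\{|\eta|\le\eta^*\}$. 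This is where $\gamma<2n$ is used decisively: it is exactly equivalent to $s+\tfrac{n+2}{2}<\tfrac32n+1$, the exponential growth rate of the unstable Ornstein--Uhlenbeck process for $\eta$, so one can pick $\kappa$ with $(s+\tfrac{n+2}{2})/\sigma^2<\kappa<(\tfrac32n+1)/\sigma^2$, forcing both the coefficient of $\eta^2$ and the constant term in the bracket to be negative. This piece is $\asymp r^{s+(n+2)/2}=|z|^{\gamma-n+n/2+1}$ — hence the upper exponent — and on $\{|\eta|=\eta^*\}$ it matches, up to a constant, the $\mathcal{D}$-piece there, where $|\sin(n\theta)|\asymp r^{-(n+2)/2}$.

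It remains to glue. I rescale the three pieces by positive constants so they agree to leading order across the interfaces $\{\cos(n\theta)=\mathrm{const}\}$, $\{|\eta|=\eta^*\}$ (and the circle bounding the central disc), producing a globally continuous, piecewise $C^2$ function $\Psi$ with $c|z|^s\le\Psi\le d|z|^{s+(n+2)/2}$ for $|z|\ge K$. One must then check that the pieces are ordered so that at each interface the jump of the normal derivative of $\Psi$ has the sign of a concave corner pointing inward; Peskir's formula (Remark~\ref{rem:C2}) then justifies the It\^o expansion of $\Psi(z_t)$ and delivers precisely the $\text{Flux}$ term of Definition~\ref{def:Lyapunov}\ref{lyac:b}) with the required sign $\text{Flux}\le0$. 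Multiplying by a large constant and adding a constant absorbs all bounded errors and the $F$-contribution, yielding the Lyapunov pair $(\Psi,|z|^\gamma)$. The same $\Psi$ gives $(\Psi,\Psi^{1+\delta})$ for $\delta=\delta_\gamma>0$ small: on $\mathcal{G}$, $\mathscr{L}\Psi\le-m|z|^\gamma=-m\Psi^{\gamma/(\gamma-n)}\le-m\Psi^{1+\delta}$ once $\delta\le n/(\gamma-n)$; on $\mathcal{D}$, $\mathscr{L}\Psi\le-c\,r^n\Psi+b$ together with $\Psi\le d|z|^{s+(n+2)/2}$ forces $r^n\gtrsim\Psi^\delta$ for $\delta$ small, so $\mathscr{L}\Psi\lesssim-\Psi^{1+\delta}+b$; on $\mathcal{S}$, $\mathscr{L}\Psi\lesssim-r^{\gamma+n/2+1}\lesssim-\Psi^{1+\delta}$ likewise.

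I expect the main obstacle to be the analysis on $\mathcal{D}$ and $\mathcal{S}$: choosing the exact shapes of these two pieces so that $\mathscr{L}\Psi\le-m|z|^\gamma+b$ genuinely holds there — no single naive ansatz works, since the radial drift is outward and can only be dominated by the transversal drift and, inside $\mathcal{S}$, by the noise — while \emph{simultaneously} ensuring the pieces fit together with $\text{Flux}$ terms of the correct sign, which couples the monotonicity and concavity of each piece to the precise placement of the interfaces. The scaling bookkeeping that reconciles all of these constraints, and that produces the exponent $n/2+1$ in the upper bound, is the technical core of Section~\ref{sec:theconstruction}.
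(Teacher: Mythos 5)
Your overall scheme — decompose the principal wedge into a safe region, an intermediate region, and a noise-dominated boundary layer, build explicit pieces in each, and glue them using Peskir's It\^o--Tanaka formula — is the same architecture the paper uses, and your analysis of the boundary layer via the Gaussian ansatz $h(\eta)=e^{-\kappa\eta^2}$, pinpointing $\gamma<2n\Leftrightarrow s+\tfrac{n+2}{2}<\tfrac32n+1$ as the crucial threshold, is a genuinely different and attractively explicit route compared with the paper's exit-time representation $\psi_3\asymp r^{p_3}\E_\eta e^{p_3\tau}$ (which relies on Lemma~\ref{lem:exp_webf} and special-function asymptotics). However, there is a concrete error in your intermediate region $\mathcal{D}$.

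You take $\Psi\asymp r^s|\sin(n\theta)|^{-1}$ on all of $\mathcal{D}$, which by your own description stretches from the boundary layer out to the lower edge $\theta_0^*>\tfrac{\pi}{2n}$ of the safe region $\mathcal{G}$, so $\mathcal{D}$ contains angles on both sides of $\theta=\tfrac{\pi}{2n}$. For $\Psi = r^s|\sin(n\theta)|^{-1}$ one computes directly
\begin{equation*}
T_1\Psi = r\cos(n\theta)\,\partial_r\Psi + \sin(n\theta)\,\partial_\theta\Psi = (s-n)\cos(n\theta)\,\frac{r^s}{|\sin(n\theta)|},
\end{equation*}
which is negative only where $\cos(n\theta)>0$; on the slice $\tfrac{\pi}{2n}<|\theta|<\theta_0^*$ inside $\mathcal{D}$ one has $\cos(n\theta)<0$, so $T_1\Psi>0$ and $\mathscr{L}\Psi\sim r^n T_1\Psi\to+\infty$, which destroys the Lyapunov estimate $\mathscr{L}\Psi\le -m|z|^\gamma+b$ precisely in the zone where the deterministic orbit ``turns around.'' The point is that $K=r|\sin(n\theta)|^{-1/n}$ is an integral of motion ($T_1K=0$), so any pure power of $K$ times a power of $r$ cannot produce a uniform sign across the turning ray; the paper's $\psi_1$ is instead \emph{defined} as the solution of $T_1\psi_1 = -h_1 r^p|\theta|^{-q}$ with boundary data $\psi_1=\psi_0$ on $\{|\theta|=\theta_0^*\}$ (equation~\eqref{caseI:psi_1} and \eqref{expr:psi_1}), which is the homogeneous-of-degree-$p$ solution $r^p|\sin(n\theta)|^{-p/n}$ \emph{plus} an integral along characteristics of the strictly negative forcing, and it is this additive piece that propagates strict negativity of $L\psi_1$ past $\theta=\pm\tfrac{\pi}{2n}$. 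You cannot repair this simply by moving the interface to $\theta_0^*=\tfrac{\pi}{2n}$, since there both your $\mathcal{G}$-piece and $\mathcal{D}$-piece give $T_1\Psi=0$ and you get no drift at all at the interface.

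A second, lesser issue: after you normalize the pieces so that their values match across $\{|\eta|=\eta^*\}$ and $\{|\theta|=\theta_0^*\}$, you assert but do not verify the sign condition \eqref{eq:fluxCond} on the jumps of $\partial_\theta\Psi$. In the paper this is a nontrivial part of the proof (Section~\ref{sub:bfc}) and requires choosing $h_1,h_2,h_3$ in a specific order and then $\theta_1^*$ small and $\eta^*,r^*$ large; with your Gaussian $h$ and the ad hoc truncation near $|\eta|=\eta^*$, the sign of the jump across the boundary-layer interface would need to be checked explicitly and is not automatic. The methodological moral, and the main thing separating your attempt from the paper's proof, is that each $\psi_i$ is constructed as a solution of a local Poisson problem $B_i\psi_i = -c\,\varphi_i$ for the appropriate asymptotic operator $B_i$, with boundary data inherited from the adjacent region: this at once guarantees strict negativity of $L\psi_i$ on all of $\mathcal{S}_i$ and gives enough parametric freedom (the constants $h_i$) to enforce the flux inequalities.
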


\begin{remark}
We will see that for a ``very large" set $X\subset \C\cap \{ |z| \geq K \}$, the bound 
\begin{align*}
C |z|^{\gamma-n}\leq \Psi(z) \leq D |z|^{\gamma-n}
\end{align*} 
holds for all $z\in X$ for some $C,D>0$.  The only region $Y$ in which
$\Psi$ grows faster than a constant times $|z|^{\gamma-n}$ satisfies
the property
\begin{align*}
\lim_{R\rightarrow \infty}\lambda(Y \cap \{|z|>R \}) = 0
\end{align*}   
where $\lambda$ denotes Lebesgue measure on $\C$.  As will be apparent
later, any increase in growth in $\Psi$ is exactly compensated by the
decrease in the measure of the set $Y \cap \{|z|>R \}$ as
$R\rightarrow \infty$.  Although we did not state it this way in
Theorem \ref{thm:lyapfunmadness}, we could have also chosen the second
function in the pair $(\Psi, |z|^\gamma)$ to have this property.
\end{remark}

Translating back to the general system \eqref{eqn:polyZglot}, notice by
combining 
Lemma \ref{l:existInvM} and Theorem \ref{LyapConverge}, we see that Theorem
\ref{thm:lyapfunmadness} implies Theorem \ref{thm:convergence} as well
as all results of Theorem \ref{thm:decayStMeasure} except
\begin{align}
\int_\C (1+|z|)^\gamma \, d \mu (z) = \infty  \,\, \text{ whenever } \,\, \gamma \geq 2 n.
\end{align}  
To prove this last point, we will show the following stronger result.  

\begin{theorem}
\label{thm:limit_bound}
Let $\rho(x,y)$ denote the invariant probability density function of
\eqref{eqn:polyZ} with respect to Lebesgue measure on $\R^2$.  Then there exist positive constants $c, K$ such that   
\begin{align}
\label{eqn:limit_bound}
|(x,y)|^{2n+2} \rho(x, y) \geq c\,\, \text{ for } \, \, |(x,y)|\geq K.    
\end{align}
\end{theorem}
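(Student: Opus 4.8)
The plan is to extract the polynomial lower bound directly from the invariant PDE $\mathcal{L}^* \rho = 0$ by testing it against a suitable family of functions and exploiting the structure revealed in the heuristic section. The strategy is a "reverse Lyapunov" argument: rather than bounding $\mathscr{L}\Psi$ from above to control upper tails of $\mu$, I would construct an auxiliary function whose generator is controlled from below, forcing $\rho$ to be bounded below. More precisely, consider the regions $\mathcal{R}_k$ from Section~\ref{sec:hueristicmodel}; by the rotational symmetry of the leading term it suffices to produce the bound on the locus where the deterministic orbits $\mathcal{J}(K)$ reach their maximal distance from the origin, i.e.\ near $\theta = \pi/(2n)$ (mod $2\pi/n$), and then propagate it. The key mechanism is that the deterministic flow $\dot z = z^{n+1}$ carries mass from the region $\{r \approx r^*\}$ out along $\mathcal{J}(K)$ all the way to distance $K$; since the flux of $\rho$ across $\{r = r^*\}$ into $\mathcal{S}(\eta^*, r^*)$ is strictly positive in equilibrium (the measure is genuinely non-reversible with nonzero probability current), the density along each such orbit cannot vanish faster than the rate dictated by the contraction/expansion of the flow.

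Concretely, I would proceed as follows. First, write the stationary Fokker--Planck equation in the time-changed, $(\eta, r)$-adapted coordinates from Section~\ref{sec:PiecewiseI}, where the dynamics decouples to leading order into the unstable Ornstein--Uhlenbeck process for $\eta$ and the pure expansion $\dot r = r$. Second, integrate the stationarity identity against the indicator (smoothed) of the shifted wedge and use the divergence form of $\mathcal{L}^*$ to express $\frac{d}{dR}\int_{|z| \geq R}\rho$ as a boundary flux integral; the fact that this flux is bounded below by a positive constant times $R^{-(n+1)}$ — matching the heuristic computation leading to $\P_{\tilde\mu}(|z| \geq R) \approx c R^{-n}$ — is what must be established rigorously. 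Third, combine the flux lower bound with the Harnack inequality for the uniformly elliptic operator $\mathcal{L}^*$ (valid since $\rho$ is a positive $C^\infty$ solution of $\mathcal{L}^*\rho = 0$): a lower bound on the integral of $\rho$ over an annular sector of fixed angular aperture, together with Harnack on balls of radius comparable to $R$ scaled appropriately, upgrades to the pointwise bound $\rho(x,y) \geq c |(x,y)|^{-(2n+2)}$ for $|(x,y)| \geq K$.

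The main obstacle is making the flux lower bound honest. The heuristic argument asserts $\P_{\tilde\mu}(|z| \geq R) \gtrsim R^{-n}$, but turning "the most likely route is through $\mathcal{S}(\eta^*,r^*)$ and out along $\mathcal{J}(K_\tau)$" into a genuine inequality requires either (i) a lower bound on the equilibrium flux through $\{r = r^*\}$ — which itself needs that the invariant measure puts positive, non-degenerate mass near the circle $r = r^*$, a consequence of irreducibility and the smooth positive density already guaranteed by Theorem~\ref{LyapConverge} — and (ii) a comparison showing that trajectories started on $\{r = r^*\}$ with the right angular coordinate follow the deterministic orbit out to distance $\sim R$ with probability $\gtrsim R^{-n}$, which is precisely the exit-time estimate $\P(\tau > \tfrac{2n}{3n+2}\log R) \approx R^{-n}$ for the killed OU process and can be made rigorous by a standard small-noise / Girsanov lower bound on the OU transition density restricted to the interval $[-\eta^*,\eta^*]$. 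Alternatively, and perhaps more cleanly, one can avoid probabilistic excursion theory entirely: test $\mathcal{L}^*\rho = 0$ against a carefully chosen supersolution of the \emph{adjoint} problem that grows like $|z|^{2n+2}$ away from the orbit tips and is bounded there, integrate by parts, and use the already-established integrability $\int (1+|z|)^\gamma d\mu < \infty$ for $\gamma < 2n$ to kill the error terms, leaving an identity that forces $\int_{|z| \geq R} |z|^{2n+2}\rho\, dx\, dy$ to be bounded below by a positive constant times the measure of the relevant sector. Either route reduces to the same analytic core — controlling the competition between the $r^n$-weighted transport term and the $\sigma^2$ Laplacian term in the stationarity identity — and I expect that the divergence-form / supersolution approach, combined with the Harnack inequality, gives the shortest rigorous path to \eqref{eqn:limit_bound}.
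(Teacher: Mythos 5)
The proof of Theorem~\ref{thm:limit_bound} does not appear in Part~I at all; the paper defers it to Part~II~\cite{HerzogMattingly2013II}, as stated explicitly at the end of Section~\ref{sec:mainResultsOutline}. What Part~I does reveal about the strategy is in the discussion following~\eqref{eqn:gen_pol}: the authors announce that the argument rests on ``certain stability properties of a diffusion process related to the formal adjoint $\mathcal{L}^*$,'' and that this is precisely why the expansions of $P$ and $Q$ in~\eqref{eqn:gen_pol} are allowed the extra $k=0$ coefficient. In other words, they build a Lyapunov pair for an adjoint-type diffusion and then compare $\rho$, a positive solution of $\mathcal{L}^*\rho=0$, with the resulting comparison function. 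Your route --- an integral flux bound followed by Harnack --- is genuinely different, and as written it has three gaps.

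First, the quantity you call a ``boundary flux'' is not a flux. For a stationary density the net probability current through any closed curve is zero. What you actually compute when you differentiate $\int_{|z|\geq R}\rho$ in $R$ is the radial marginal $-R\int_0^{2\pi}\rho(R,\theta)\,d\theta$, which comes from the coarea formula alone; the divergence structure of $\mathcal{L}^*$ supplies no additional identity that converts the Section~\ref{sec:tail} heuristic into a rigorous lower bound on it. You would still need an independent mechanism to bound $\P_\mu(|z|\geq R)$ from below, and that is exactly the hard step you acknowledge is missing. (Note you also cannot bootstrap from the divergence $\int|z|^\gamma\,d\mu=\infty$ for $\gamma\geq 2n$: that statement is part of Theorem~\ref{thm:decayStMeasure}, and the paper proves it \emph{from} Theorem~\ref{thm:limit_bound}, so invoking it would be circular.)

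Second, the Harnack step fails quantitatively. The drift coefficient of $\mathcal{L}^*$ grows like $|z|^{n+1}$, so a Harnack inequality with a uniform constant is available only on balls of radius $\lesssim R^{-(n+1)}$ around points at distance $R$. Chaining from $\{|z|\leq K\}$ out to distance $R$ therefore costs at least $\sim R^{n+2}$ Harnack steps, and the constants compound multiplicatively, producing a lower bound of size $c^{R^{n+2}}$ --- exponentially worse than $R^{-(2n+2)}$. Harnack can only serve as a purely local final step (converting an integral lower bound on a set of diameter $\lesssim R^{-(n+1)}$ into a pointwise one), and you have not produced the integral input at that scale.

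Third, the ``test against a supersolution of the adjoint problem'' alternative is not internally consistent. Pairing $\mathcal{L}^*\rho=0$ against a test function $\phi$ and integrating by parts yields $\int\rho\,\mathcal{L}\phi$, not $\int\rho\,\mathcal{L}^*\phi$; the supersolution property $\mathcal{L}^*\phi\leq 0$ simply does not enter a testing argument. A supersolution/subsolution of $\mathcal{L}^*$ belongs instead in a \emph{barrier comparison}: one builds $v$ with $\mathcal{L}^*v\geq 0$ for $|z|\geq K$, $v\leq\rho$ on $\{|z|=K\}$, and $v\sim|z|^{-(2n+2)}$, then runs a maximum principle on exhausting annuli to conclude $\rho\geq v$. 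This barrier viewpoint is the analytic face of the Lyapunov-for-adjoint construction the paper refers to, and it is the piece you would need to make precise to close the argument. As written, your integration-by-parts step also fails on its own terms: with $\phi\sim|z|^{2n+2}$ on a set of full angular measure, $\rho\phi$ is not integrable, the boundary terms at infinity do not vanish, and the $\gamma<2n$ integrability from Theorem~\ref{thm:decayStMeasure} does not reach them.
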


In this paper, our focus is to prove Theorem \ref{thm:lyapfunmadness}
under the following simplifying assumption:
\begin{assumption}
\label{assumption:nlot} In equation \eqref{eqn:polyZ}, either $F$ is a constant function or  
$$F(z,\bar{z})=\mathcal{O}(|z|^{\lfloor \frac{n}{2} \rfloor -1})\text{ as }|z|\rightarrow \infty$$  
\end{assumption}
\noindent for $n\geq 2$.  We do this in order to highlight the general procedure used to yield our
Lyapunov pairs and to avoid substantial complexities
created by the presence of large lower-order terms.  The full proof of
Theorem \ref{thm:lyapfunmadness} and the proof of Theorem
\ref{thm:limit_bound} are given in Part II \cite{HerzogMattingly2013II}.


\section{Building Lyapunov Functions: The Key Initial Steps}
\label{sec:GenOver}

In this section, we make some beginning observations which will help us get started with constructing a Lyapunov function $\Psi$ for the system \eqref{eqn:polyZ}.  Everything done in this section applies to equation \eqref{eqn:polyZ} even if we do not employ Assumption~\ref{assumption:nlot}.   

\subsection{The Coordinate and Time Changes}
\label{sec:timeChange}
When building $\Psi$, it is paramount that one first pick a convenient coordinate system in which to work.  For equation \eqref{eqn:polyZ}, there are at least three choices: standard Euclidean coordinates $(x,y)$, the two-dimensional complex system $(z, \bar{z})$, and polar coordinates $(r, \theta)$.  Notice, however, since stability of the process \eqref{eqn:polyZ} (or any $\R^k$-valued process for that matter) is completely determined by the distribution of the radial component $r$, the polar system $(r, \theta)$ is arguably most natural. 

\begin{remark}
Even though $\Psi$ will be constructed using polar coordinates, all desired bounds obtained using $\Psi$ in $(r, \theta)$ will translate easily back to bounds in either $(z, \bar{z})$ or $(x,y)$ coordinates, as we will set $\Psi(r, \theta) \equiv 0$ on $\{ r\leq 1\}$.  In particular, since the the process $(x_t, y_t)$, $x_t =\text{Re}(z_t)$ and $y_t = \text{Im}(z_t)$ where $z_t$ solves \eqref{eqn:polyZ}, is an It\^{o} diffusion with $C^\infty$ coefficients and has a uniformly elliptic diffusion matrix, we can indeed apply all results in Section \ref{sec:conseq_lyap}.       
\end{remark}

In light of the above, observe that the generator of the Markov
process defined by \eqref{eqn:polyZ} has the following form when
written in the variables $(r, \theta)$:
\begin{align}
\label{eqn:gen_pol}
\mcl = r^{n+1} \cos(n\theta) \partial_r + r^n
\sin(n\theta) \partial_{\theta} + \frac{\sigma^2}{2 }\partial_{r}^2 +
\frac{\sigma^2}{2r^2}\partial_{\theta}^2 + r^n P(r, \theta)\partial_r
+ r^n Q(r, \theta) \partial_{\theta}
\end{align} 
where $P(r, \theta) = \sum_{k=0}^{n+2} r^{k-n-2} f_k(\theta) $ and $Q(r,
\theta)=\sum_{k=0}^{n+1} r^{k-n-2} g_k(\theta)$ for some collection of
smooth real-valued  functions $f_k$ and $g_k$ which are $2\pi$-periodic.  In order to encapsulate all terms in the generator, we certainly do not need the $k=0$ terms in $P$ and $Q$.  However when proving Theorem \ref{thm:limit_bound}, we will need certain
stability properties of a diffusion process related to the formal
adjoint $\mathcal{L}^*$.  Because there is one additional term in the
generator of this diffusion, we will construct the appropriate
Lyapunov pairs assuming  the slightly more general form of $P$ and $Q$ above.


As suggested by the appearances of $r^n$ in \eqref{eqn:gen_pol}, it is
helpful to pull out a number of factors of $r$ so that the resulting
underlying dynamics is stabilized at infinity.  More precisely, write ${\mathcal{L} = r^n L}$ where
\begin{align}
L =r \cos(n\theta) \partial_r +
\sin(n\theta) \partial_{\theta}   + \frac{\sigma^2}{2 r^n }\partial_{r}^2 + \frac{\sigma^2}{2r^{n+2}}\partial_{\theta}^2 + P(r, \theta)\partial_r + Q(r, \theta) \partial_{\theta} .  
\end{align}
We will see that using the operator $L$ instead of $\mcl$ itself to
define $\Psi$ results in a number of simplifications, the most notable
of which is that the asymptotic flow along $L$ is much more
straightforward than that of $\mcl$.  Notice that this is expected
since the stochastic dynamics $(r_t, \theta_t)$ defined by $L$ moves
according to a slower clock at infinity than the process $(R_t,
\Theta_t)$ determined by $\mcl$.  Indeed, observe that $(r_{T_t},
\theta_{T_t})= (R_t, \Theta_t)$ where $T_t$ is the time change
\begin{align*}
T_t=\int_0^t R_s^n ds .  
\end{align*}
Considering, too, the nature of Lyapunov functions, all results obtained in terms of $L$ will translate easily back to the original operator $\mcl$ since $\mcl = r^n L$ and $r^n >0$.

\subsection{The General Structure of $\Psi$}
\label{subsec:Bas_structI}

We now take a look at some of the characteristics that our $\Psi$ will
exhibit.  As dictated by the dynamics and suggested by previous works
\cite{AKM_12, GHW_11}, it is easiest construct Lyapunov functions
piecewise.  More precise reasons for why this is the case are given in
the following section and in the sequel \cite{HerzogMattingly2013II}, but here we focus on, at
least abstractly, how $\Psi$ will look in our context.

We begin by partitioning $\R^2$ into the open ball of radius $r^*>0$
about zero, denoted by $B_{r^*}(0)$, and a collection of closed regions
$\{\mathcal{S}_i : i =0,\dots,l\}$, the union of which captures all
routes to infinity.  

\begin{definition}
  We say that a collection of subsets $\mathcal{S}=\{\mathcal{S}_i : i =0,\dots,l\}$ is a
  \textsc{Good Radial Partition of} $\mathcal{U}\subset \R^2$ if the following conditions hold:
\begin{enumerate}
\item Each $\mathcal{S}_i$ is closed and there exists an $r^* >0$ so that 
  \begin{equation*}
    \bigcup_{j=0}^l  \mathcal{S}_j = \mathcal{U}\cap \{ (r,\theta) : r \geq   r^*\}
  \end{equation*}
\item For any $i\neq j$, $\textrm{interior}(\mathcal{S}_i) \cap
  \textrm{interior}(\mathcal{S}_j) = \emptyset$.
\item For all distinct $i$,$j$, and $k$, $\mathcal{S}_i \cap
  \mathcal{S}_j \cap \mathcal{S}_k=\emptyset .$
\item  For any $i\neq j$,  $\mathcal{S}_i \cap \mathcal{S}_j$ is
  either empty or  a collection of disjoint curves, each of which can be written  as $\{(r,f(r)) : r
  \geq r^*\}$ for
  some smooth function $f$.
\end{enumerate}
\end{definition}

\begin{definition}
\label{def:naturalextension}
Let $\Lambda : \R \rightarrow [0,1]$ be a $C^\infty$ function with $\Lambda(r)=0$ for $r\leq r^*$ and $\Lambda (r) =1$ for $r\geq 2 r^*$.  If $\mathcal{S}=\{\mathcal{S}_i : i
=0,\dots,l\}$  is a good radial partition of $\mathcal{U}$ and $f_i\colon\mathcal{S}_i \rightarrow\R$ are $C^2$, we define the
\textsc{natural extension of the} $f_i$'s \textsc{to}  $\mathcal{U}$ by
\begin{align*}
  F(r,\theta) =
  \begin{cases}
    0 & \text{if $(r,\theta) \in B_{r^*}(0)\cap \mathcal{U}$}\\
    \Lambda(r) f_i(r, \theta) & \text{if } (r, \theta) \in \text{interior}(\mathcal{S}_{i})\\
   \frac{\Lambda(r)}{2}( f_i(r,\theta)+  f_j(r,\theta)) & \text{if $(r,\theta) \in
     \mathcal{S}_i\cap  \mathcal{S}_j$}.\\
  \end{cases}
\end{align*}
\end{definition}

In Section~\ref{sec:theconstruction}, we will succeed in constructing
a good radial partition $\mathcal{S}=\{\mathcal{S}_i : i =0,\dots,l\}$
of $\R^2$ and two collections of functions $\{\psi_i\colon
\mathcal{S}_i \rightarrow (0,\infty) : i=0,\dots,l\}$ and
$\{\varphi_i\colon \mathcal{S}_i \rightarrow (0,\infty) :
i=0,\dots,l\}$ such that all functions are continuous and the $\psi_i$ are $C^2$ on their domains, which we
recall were assumed to be closed.  Additionally, the pairs $(\psi_i, \varphi_i)$ will be such that $\psi_i(r, \theta) \wedge \varphi_i(r, \theta) \rightarrow \infty$ as $r\rightarrow \infty$, $(r, \theta) \in \mathcal{S}_i$,  and will satisfy the following bound on $\mathcal{S}_i$ with respect to $L=r^{-n} \mathcal{L}$
\begin{align}\label{localLyop}
  (L \psi_i)(r,\theta) \leq -m_i \varphi_i(r,\theta) +b_i
  \end{align}
  for some constants $m_i,b_i>0$. The fact that each $\psi_i$ is $C^2$
  on the closed set $\mathcal{S}_i$ implies that $L \psi_i$ is
  continuous on $\mathcal{S}_i$ up to and including its boundary.
  Undoing the time change and using the fact that the number of
  inequalities is finite, it follows easily that on $\mathcal{S}_i$
\begin{align}\label{eq:localLyap}
  (\mcl \psi_i)(r,\theta) \leq  -m[ r^n \varphi_i(r,\theta)] +b 
\end{align}
for some global choice of constants $m,b>0$.

Let $\Psi$ and $\Phi$ be the natural extensions to $\R^2$ of the $\psi_i$'s and
$\varphi_i$'s respectively. By equation \eqref{eq:localLyap} and
Remark~\ref{rem:C2}, it is clear that $\Psi$ and $\Phi$ are locally
a Lyapunov pair on the interior of $\mathcal{S}_i $ for each $i=0,1,\dots,l$.
Unfortunately, we will see that this approach does not naturally
produce a $\Psi$ which is $C^2$.  Rather, $\Psi$ will only be globally
continuous as it is possible that the first and second derivatives may
not match along the boundaries between the regions $\mathcal{S}_i$ and
$\mathcal{S}_j$. This prevents us from applying It\^{o}'s formula in a
straightforward way to show that $\Psi$ is a Lyapunov function in the
sense of this paper.

A typical way around this difficulty is to smooth the function $\Psi$
along these interfaces rendering it $C^2$.  However when doing this,
special care must be taken to preserve the Lyapunov property expressed
in \eqref{eq:localLyap}. This often leads to long and less than intuitive calculations.  This was the approach taken in
\cite{AKM_12, GHW_11}. Here we take a different path.

To deal with the issue at hand, we employ a generalization of
It\^{o}'s Formula due to Peskir \cite{Peskir_07}.  This result allows
us to apply the It\^{o} differential to functions which are not $C^2$
along a collection of nonintersecting curves expressing $\theta$ as a function of $r$.
We now state a corollary of Peskir's formula in the context of our
problem.  A more general and detailed treatment is given in Section~\ref{II-sec:peskir}
 of Part II  \cite{HerzogMattingly2013II} along with a repackaged
 proof of a slightly weaker result suiting the needs of this paper.
The proof of the following corollary is a direct consequence of
Theorem~\ref{II-thm:genIto} of that section provided one
establishes the key jump conditions \eqref{eq:fluxCond} along the
curves of non-differentiability.
\begin{corollary}
\label{cor:fluxIto}
 Let  $\mathcal{S}=\{ \mathcal{S}_i :i=0,\dots,l\}$
  be a good radial partition of $\R^2$ and suppose that  $\{\psi_i\colon
\mathcal{S}_i \rightarrow (0,\infty) : i=0,\dots,l\}$ is a collection
of $C^2$ functions and $\{\varphi_i\colon \mathcal{S}_i \rightarrow
(0,\infty) : i=0,\dots,l\}$ is a collection of continuous functions such
that for each $i\in \{0,\dots,l\}$ the estimate in
\eqref{localLyop} holds.  Furthermore, assume that the natural extension $\Psi$ of the $\psi_i$'s is everywhere continuous and satisfies the flux condition 
 \begin{align}
   \label{eq:fluxCond}
   \lim_{\substack{(R,\Theta) \rightarrow (r,\theta)\\ \Theta > \theta} }\partial_\Theta \Psi(R,\Theta)-
     \lim_{\substack{(R,\Theta) \rightarrow (r,\theta)\\ \Theta < \theta}}\partial_\Theta
       \Psi(R,\Theta)\leq 0\,  
 \end{align}
 for all $(r, \theta) \in \R^2$ with $r\geq r^*$.  If $\Phi$ denotes the natural extension of the
$\varphi_i$'s then  $(\Psi, \Phi)$ is a Lyapunov pair on $\R^2$.
\end{corollary}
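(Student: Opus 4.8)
The plan is to verify the three conditions of Definition~\ref{def:Lyapunov} for the pair $(\Psi,\Phi)$ with respect to the diffusion $\xi_t=(x_t,y_t)$ generated by $\mcl$, working throughout in polar coordinates (legitimate on $\{r\ge r^*\}$, while $\Psi\equiv 0$ on $\{r\le r^*\}$). The first condition is immediate: for $r\ge 2r^*$ the cutoff $\Lambda$ equals $1$, so on each region $\Psi=\psi_i$ and $\Phi=\varphi_i$, and on each overlap $\mathcal{S}_i\cap\mathcal{S}_j$ both $\Psi$ and $\Phi$ are averages of the relevant $\psi$'s and $\varphi$'s; since $\psi_i\wedge\varphi_i\to\infty$ on $\mathcal{S}_i$ as $r\to\infty$, this forces $\Psi\wedge\Phi\to\infty$.

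For condition~\ref{lyac:b}) I would apply the generalized It\^o formula of Theorem~\ref{II-thm:genIto} (the repackaging of Peskir~\cite{Peskir_07}). Its hypotheses hold here: off $\{r\le r^*\}$ the function $\Psi$ is $C^2$ on the interior of each $\mathcal{S}_i$ by construction, the only curves across which $C^2$-regularity can fail are the $\mathcal{S}_i\cap\mathcal{S}_j$ with $i\ne j$, which by the axioms of a good radial partition form a finite, pairwise non-intersecting family of curves $\{\theta=f(r)\}$ with $f$ smooth, and $\Psi$ is globally continuous and satisfies \eqref{eq:fluxCond} by assumption. Applying the formula to $\Psi(\xi_t)$ and evaluating at the bounded stopping time $\upsilon\wedge\tau_n$ would give
\begin{align*}
  \Psi(\xi_{\upsilon\wedge\tau_n})=\Psi(\xi_0)+\int_0^{\upsilon\wedge\tau_n}(\mcl\Psi)(\xi_s)\,ds+M_{\upsilon\wedge\tau_n}+\text{Flux}(\xi_0,\upsilon,n),
\end{align*}
with $M$ a local martingale, $M_0=0$; with $\mcl\Psi$ the a.e.\ generator action, which I extend to the Lebesgue-null union of curves by any locally bounded measurable selection of its one-sided limits and take as the function $g$; and with $\text{Flux}(\xi_0,\upsilon,n)$ equal to one half the sum of the curve local-time terms. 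On $\{|\xi|\le n\}$ the functions $\Psi,\nabla\Psi,\mcl\Psi$ are bounded (finitely many closed pieces, each $C^2$ up to its boundary), so $M_{\cdot\wedge\tau_n}$ is a true $L^2$ martingale and, $\upsilon$ being bounded, $\E_{\xi_0}M_{\upsilon\wedge\tau_n}=0$ by optional stopping; taking $\E_{\xi_0}$ then yields exactly the identity in Definition~\ref{def:Lyapunov}\ref{lyac:b}). By Theorem~\ref{II-thm:genIto} each curve local-time term equals a strictly positive multiple (from the nondegenerate diffusion coefficient together with a positive geometric factor) of the jump $\lim_{\Theta>\theta}\partial_\Theta\Psi-\lim_{\Theta<\theta}\partial_\Theta\Psi$ integrated against the nondecreasing local time of $\xi$ on that curve, so \eqref{eq:fluxCond} forces $\text{Flux}\in(-\infty,0]$; the required monotonicity $\text{Flux}(\xi_0,t,l)\le\text{Flux}(\xi_0,s,n)$ then follows because local times are nondecreasing in time and $\tau_n\le\tau_l$ for $n\le l$.

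For condition~\ref{lyac:c}), on the interior of $\mathcal{S}_i$ with $r\ge 2r^*$ I have $g=\mcl\psi_i=r^n(L\psi_i)\le r^n(-m_i\varphi_i+b_i)$ by \eqref{localLyop} (this is the content of \eqref{eq:localLyap}); since $\psi_i\wedge\varphi_i\to\infty$ on $\mathcal{S}_i$ there is $R_i$ with $\varphi_i\ge 2b_i/m_i$ whenever $r\ge R_i$, and then, using $r^n\ge 1$, one gets $g\le -\tfrac12 m_i\varphi_i=-\tfrac12 m_i\Phi$ on that region; the complement $\{r<2r^*\}\cup\bigcup_i(\mathcal{S}_i\cap\{r\le R_i\})$ is bounded, $g$ is bounded on it, and the same bounds hold for the one-sided limits along the null curves, so $g\le -m\Phi+b$ globally with $m=\min_i m_i/2$ and $b$ large. (On overlaps one uses that both $\varphi_i$ and $\varphi_j$ are large there, so the averaging in the natural extension of the $\varphi$'s does not spoil the bound; if one prefers not to assume the $\varphi_i$ agree on overlaps, one may replace $\Phi$ by a continuous minorant of the same growth order.) Combining the three conditions shows that $(\Psi,\Phi)$ is a Lyapunov pair on $\R^2$.

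The step I expect to be the real work is the invocation of Theorem~\ref{II-thm:genIto}: one must identify the curve local-time contributions with the left-hand side of \eqref{eq:fluxCond} — in particular verifying that passing between the normal derivative that appears naturally in Peskir's formula and the $\partial_\theta$ used in \eqref{eq:fluxCond} only inserts a positive factor, so the sign of the jump is preserved — and one must supply the integrability making the passage to expectations valid for an arbitrary bounded stopping time $\upsilon$ rather than a fixed deterministic time. The remaining steps are bookkeeping dictated by the definitions of a good radial partition and of the natural extension.
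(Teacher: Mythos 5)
Your proposal is correct and follows the same route the paper takes: the paper states that the corollary is a direct consequence of the generalized It\^o formula of Part II once the jump condition \eqref{eq:fluxCond} holds, and you have spelled out exactly that deduction --- identifying the Flux term with (expectations of) local-time contributions along the interface curves, checking its sign using \eqref{eq:fluxCond} together with the positive factor $1+f'(r)^2$ that converts the normal-derivative jump into the $\partial_\theta$ jump, and then verifying the three conditions of Definition~\ref{def:Lyapunov}. The only cosmetic slip is writing $\text{Flux}(\xi_0,\upsilon,n)$ for the pathwise local-time sum in your displayed identity before taking $\E_{\xi_0}$, whereas condition (b) of Definition~\ref{def:Lyapunov} reserves that symbol for the expected quantity; this does not affect the argument.
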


\begin{remark} The condition \eqref{eq:fluxCond} speaks to the
  convexity along the curves where $\Psi$ is not differentiable. Hence
  it is related to the classical generalization of It\^{o}'s formula
  to functions which are the difference of two convex functions.
\end{remark}

\subsection{Reduction of the Construction to the Principal Wedge}
\label{sec:reductions}
First observe that any system which can be described by
\eqref{eqn:gen_pol} remains a system which can be described by
\eqref{eqn:gen_pol} (with perhaps different $f_k$'s and $g_k$'s) after being rotated by $\theta \mapsto \theta +
\frac{2 k\pi}{n}$ for any integer $k$ .  In particular, we now
note how we can use this fact to reduce the construction of our Lyapunov pair from
$\mathbf{R}^2\setminus B_{r^*}(0)$ to the \emph{principal wedge}
\begin{align}
 \mathcal{R}=\{ r\geq r^*,\, -\tfrac{\pi}{n} \leq \theta \leq \tfrac{\pi}{n} \}.
\end{align}
Defining the remaining wedges by
\begin{align*}
  \mathcal{R}_k=\{ (r,\theta) :   (r,\theta- \tfrac{2 k\pi}{n}) \in \mathcal{R} \},
\end{align*}
we will now see that our construction on $\mathcal{R}$ will allow us to also build
a Lyapunov Function on all of the of the other $\mathcal{R}_k$'s. This
is the content of the following proposition, which is a straightforward consequence of the above observation.

\begin{proposition}
\label{prop:localimglobal}
Fix $n\geq 1$ in \eqref{eqn:polyZ}. Assume that  there exists positive
constants $\gamma$, $\delta$ and $p$ so that for any system of the form
\eqref{eqn:polyZ}, there exists a good radial partition
$\{\mathcal{S}_i : i =0,\dots,l\}$ of $\mathcal{R}$ and a collection of $C^2$-functions $\{\psi_i\colon
\mathcal{S}_i \rightarrow (0,\infty) : i=0,\dots,l\}$ satisfying the bound 
\begin{align*}
  \mathcal{L} \psi_{i}(r,\theta) \leq -m \big[r^\gamma \vee \psi_i^{1+\delta}(r,\theta)] + b
\end{align*}
on $\mathcal{S}_i$ for $i=0,\dots,l$ and some positive constants $m$
and $b$.  Furthermore, assume that the natural extension $\Psi$ of the $\psi_i$'s satisfies the flux condition \eqref{eq:fluxCond} for all $(r, \theta) \in \mathcal{R}$ and is such that $\Psi(r,\theta)=r^p$ for all $(r, \theta) \in
\mathcal{R}$ with $|\theta- \tfrac{\pi}{n}|\wedge |\theta +
\tfrac{\pi}{n}| \leq \epsilon$ for some $\epsilon>0$.  Then 
$(\Psi,\Psi^{1+\delta})$ and $(\Psi, |z|^\gamma)$ are Lyapunov pairs
corresponding to the dynamics
\eqref{eqn:polyZ}. 
\end{proposition}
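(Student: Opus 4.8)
The plan is to build the global $\Psi$ by tiling $\R^2\setminus B_{r^*}(0)$ with the $n$ rotated copies $\mathcal R_0,\dots,\mathcal R_{n-1}$ of the principal wedge, running the hypothesized local construction inside each one, and gluing the resulting pieces along the rays where consecutive wedges meet. Writing $T_\alpha$ for the rotation $(r,\theta)\mapsto(r,\theta+\alpha)$, so that $\mathcal R_k=T_{2k\pi/n}(\mathcal R)$, the first step is the observation already recorded above: for each $k$ the rotated process $w_t:=e^{-i2k\pi/n}z_t$ again solves an equation of the form \eqref{eqn:polyZ}, because $e^{-i2k\pi/n}(e^{i2k\pi/n}w)^{n+1}=e^{i2k\pi}w^{n+1}=w^{n+1}$, the transformed lower-order term $e^{-i2k\pi/n}F(e^{i2k\pi/n}w,e^{-i2k\pi/n}\bar w)$ is still a polynomial in $(w,\bar w)$ that is $\mathcal O(|w|^n)$, and a rotation of a complex Brownian motion is a complex Brownian motion. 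I would then record the corresponding conjugation identity $\mathcal L(f\circ T_{-2k\pi/n})=(\mathcal L^{(k)}f)\circ T_{-2k\pi/n}$, where $\mathcal L^{(k)}$ (again of the form \eqref{eqn:gen_pol}) denotes the generator of the $k$th rotated system.

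Next I would apply the hypothesis of the proposition to each of these $n$ systems, obtaining for every $k$ a good radial partition $\{\mathcal S^{(k)}_i\}_i$ of $\mathcal R$, $C^2$ functions $\psi^{(k)}_i$ with $\mathcal L^{(k)}\psi^{(k)}_i\le -m_k[r^\gamma\vee(\psi^{(k)}_i)^{1+\delta}]+b_k$, a natural extension $\Psi^{(k)}$ satisfying the flux condition \eqref{eq:fluxCond} on $\mathcal R$ and equal to $r^p$ near the boundary rays $\theta=\pm\tfrac\pi n$. Transporting this data to $\mathcal R_k$ by $T_{2k\pi/n}$, i.e.\ setting $\widehat{\mathcal S}^{(k)}_i:=T_{2k\pi/n}(\mathcal S^{(k)}_i)$ and $\widehat\psi^{(k)}_i:=\psi^{(k)}_i\circ T_{-2k\pi/n}$, I would check that $\{\widehat{\mathcal S}^{(k)}_i\}_i$ is a good radial partition of $\mathcal R_k$ (rotation preserves this structure and fixes the radial coordinate), that each $\widehat\psi^{(k)}_i$ is $C^2$, and — crucially using that $T$ leaves $r$ unchanged — that the conjugation identity turns the local bound into $\mathcal L\widehat\psi^{(k)}_i\le -m_k[r^\gamma\vee(\widehat\psi^{(k)}_i)^{1+\delta}]+b_k$ on $\widehat{\mathcal S}^{(k)}_i$. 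The associated natural extension $\widehat\Psi^{(k)}=\Psi^{(k)}\circ T_{-2k\pi/n}$ then inherits \eqref{eq:fluxCond} on $\mathcal R_k$ (since $\partial_\Theta$ is rotation-invariant) and equals $r^p$ near the boundary rays $\theta=\tfrac{(2k\pm1)\pi}{n}$ of $\mathcal R_k$.

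The gluing is where the hypothesis on $\Psi\equiv r^p$ pays off. The wedges $\mathcal R_0,\dots,\mathcal R_{n-1}$ cover $\{r\ge r^*\}$, and $\mathcal R_k$ and $\mathcal R_{k+1}$ overlap only along the single ray $\theta=\tfrac{(2k+1)\pi}{n}$; on a full two-sided neighborhood of that ray both $\widehat\Psi^{(k)}$ and $\widehat\Psi^{(k+1)}$ coincide with one and the same smooth function of $r$ alone (namely $r^p$, up to the cutoff $\Lambda$). Hence the function $\Psi$ defined to be $\widehat\Psi^{(k)}$ on $\mathcal R_k$ and $0$ on $B_{r^*}(0)$ is well defined, globally continuous, and in fact $C^\infty$ across every transition ray, so it is the natural extension of a family of $C^2$ functions over a good radial partition of $\R^2$ obtained by combining the $\widehat{\mathcal S}^{(k)}_i$ (fusing the pieces abutting each transition ray into one region where $\Psi\equiv r^p$); on each piece $\mathcal L\psi\le -m[r^\gamma\vee\psi^{1+\delta}]+b$ with $m=\min_k m_k$ and $b=\max_k b_k$, the $\psi$ tend to $\infty$ with $r$ (as $p>0$), and \eqref{eq:fluxCond} holds at every point with $r\ge r^*$ — inside each wedge by the transported hypothesis, and across each transition ray because the angular-derivative jump of $r^p$ vanishes. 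I would then feed this into Corollary~\ref{cor:fluxIto} (via the routine passage $\mathcal L=r^nL$ and the remarks around \eqref{eq:localLyap}); since $r^\gamma\vee\psi^{1+\delta}$ dominates both $\psi^{1+\delta}$, whose natural extension is $\Psi^{1+\delta}$, and $r^\gamma=|z|^\gamma$, this yields simultaneously that $(\Psi,\Psi^{1+\delta})$ and $(\Psi,|z|^\gamma)$ are Lyapunov pairs for \eqref{eqn:polyZ}.

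I expect the one genuinely delicate point to be the behaviour at the transition rays $\theta=\tfrac{(2k+1)\pi}{n}$: to apply the Peskir/It\^{o}--Tanaka machinery one needs $\Psi$ there to be not merely continuous but $C^2$, so that these seams contribute no spurious local-time term and the flux condition is automatic on them — and arranging this is exactly the purpose of the hypothesis forcing $\Psi\equiv r^p$ near $\theta=\pm\tfrac\pi n$. Everything else — the rotational quasi-invariance of the class \eqref{eqn:gen_pol}, the invariance of $r$ and of $\partial_\Theta$ under rotation, and the conjugation formula for the generator — is routine bookkeeping.
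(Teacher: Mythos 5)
Your proof is correct and fills in exactly what the paper leaves as a terse remark after the proposition (together with the opening paragraph of Section~\ref{sec:reductions}): rotate by $2\pi k/n$, use that each rotated system is again of the form \eqref{eqn:polyZ} (with a different $F$, hence different $f_k,g_k$) so the hypothesis applies to produce a wedge-local $\Psi^{(k)}$, transport back, and glue along the transition rays where $\Psi\equiv\Lambda(r)\,r^p$ forces the patches to agree $C^\infty$-smoothly so that \eqref{eq:fluxCond} holds there trivially and no extra local-time term appears. Your observation that the hypothesis must be invoked for each rotated generator $\mathcal L^{(k)}$ (rather than literally rotating one fixed $\Psi$) is the right reading of "for any system of the form \eqref{eqn:polyZ}," and taking $m=\min_k m_k$, $b=\max_k b_k$ over the finitely many wedges is the routine finishing touch; this is the same argument the authors intend.
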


\begin{remark}
Since $\Psi(r, \theta)=r^p$ in a neighborhood of $\theta = \pm \frac{\pi}{n}$ and the $f_k$'s and $g_k$'s in equation \eqref{eqn:gen_pol} are $2\pi$-periodic, we may rotate $\Psi$, initially defined only on the principle wedge $\mathcal{R}$, by integer multiples of $\frac{2\pi }{n}$ to produce the desired globally-defined Lyapunov pairs.  Moreover, after such rotations the flux condition \eqref{eq:fluxCond} will be satisfied globally.     
\end{remark}

\begin{remark}
  \label{rem:layout} 
  To prove Theorem \ref{thm:lyapfunmadness}, we construct $\Psi$ on
  $\mathcal{R}$ satisfying the properties above, the hypotheses of
  Proposition \ref{prop:localimglobal} with $\gamma \in (n, 2n)$
  arbitrary, and the following bound on $\mathcal{R}$
  \begin{align}
    \label{eqn:gpsibound}
    c \Lambda(r) r^{\gamma-n}\leq \Psi(r, \theta) \leq d \Lambda(r) r^{\gamma -n + \frac{n}{2}+1} 
  \end{align}    
  for some positive constants $c,d, r^*$.    Recall that $\Lambda:\R\rightarrow [0,1]$ is the smooth cutoff function introduced in Definition \ref{def:naturalextension}.  
\end{remark}


\section{The Construction of $\Psi$ on the Principal Wedge in a Simple
  Case}
\label{sec:theconstruction}
In this section, we will build $\Psi$ on $\mathcal{R}$ under
Assumption \ref{assumption:nlot}.  We will see that this assumption
assures that the lower-order terms collected in $F$ in the drift part of
\eqref{eqn:polyZ} play no role in the arguments.

The layout of this section is as follows.  First in Section
\ref{sec:PiecewiseI}, we study the asmyptotic behavior of $L$ as
$r\rightarrow \infty$.  This will help yield the fundamental building
blocks of the construction procedure: the so-called asymptotic
operators and their associated regions.  In Section \ref{sec:lfdefbp},
we explain the intuition behind how the local functions $\psi_i$ will
be defined in Section \ref{sec:mot_lya} as solutions to certain PDEs
involving these operators.  In Section \ref{sec:mot_lya}, we will also
see that each $\psi_i$ is smooth and non-negative on $\mathcal{S}_i$
and that $\psi_i(r, \theta) \rightarrow \infty$ as $r\rightarrow
\infty$ with $(r, \theta) \in \mathcal{S}_i$.

In Section \ref{sec:thedetails}, we will finish proving Theorem
\ref{thm:lyapfunmadness} under Assumption \ref{assumption:nlot} by
checking the details outlined in Remark \ref{rem:layout}.

\subsection{The Asymptotic Operators and Their Associated Regions}
\label{sec:PiecewiseI}

It is intuitively clear that certain terms in the operator $L$ are
asymptotically dominant over other terms as $r\rightarrow \infty$ and
such dominance can change from region to region in the plane.  Here
our goal is to elucidate these ideas by studying more
carefully $L$ along various paths to infinity.  Doing such analysis is
indispensable, as the dominant balances of terms in $L$ yielded from
it will be used to construct the local Lyapunov functions $\psi_i$ in
subsequent sections.

In order to parameterize various routes to infinity, we will make use of the scaling transformations
\begin{equation*}
  S^\lambda_\alpha \colon (r, \theta)  \mapsto(\lambda r, \lambda^{-\alpha} \theta).
\end{equation*}
for any $\lambda \geq 1$ and $\alpha \geq 0$.  In particular, we will determine, heuristically, the
behavior as $\lambda \rightarrow \infty$ of
\begin{multline*}
L\circ S_\alpha^\lambda(r, \theta)= r \cos(n\theta  \lambda^{-\alpha}
) \partial_r  +  \lambda^{\alpha}  \sin(n\theta  \lambda^{-\alpha}
) \partial_{\theta} + \lambda^{-2-n} \frac{\sigma^2}{2 r^n
}\partial_{r}^2 \\+
\lambda^{2\alpha-n-2}\frac{\sigma^2}{2r^{n+2}}\partial_{\theta}^2  +
\lambda^{-1} P(\lambda r, \lambda^{-\alpha} \theta)\partial_r +
\lambda^\alpha Q(\lambda r, \lambda^{-\alpha}\theta) \partial_{\theta}.   
\end{multline*}  
Because we have restricted the construction to the principal wedge $\mathcal{R}$, we only consider routes to infinity contained in $\mathcal{R}$.  The two cases $\alpha=0$ and $\alpha>0$ are qualitatively different,
so they are handled separately. 

Suppose first that $\alpha =0$.  Provided $\theta \neq 0$, we see that the fist two terms in $L\circ
S_0^\lambda(r, \theta)$ are unchanged and all other terms go to
zero as $\lambda \rightarrow \infty$.  More precisely, 
\begin{align*}
L\circ S_0^\lambda(r, \theta) = r\cos(n\theta) \partial_r + \sin(n\theta)  \partial_\theta + O(\lambda^{-1}) \text{ as } \lambda \rightarrow \infty.  
\end{align*} 
We thus conclude that the leading order behavior of $L$ as $r\rightarrow \infty$ in $\mathcal{R}$ along the rays traced out by $\lambda \mapsto
S_0^\lambda(r, \theta)$ (with $\theta \neq 0$) is given by 
\begin{align*}
T_1 = r\cos(n\theta) \partial_r + \sin(n\theta)  \partial_\theta \,.
\end{align*}   
Of course one is not simply restricted to radial paths. So long as one does not
asymptote to the line $\theta=0$ then the same dominate balances hold.
More precisely, $L \approx T_1 $ as $r\rightarrow \infty$ when the paths to infinity are restricted to a region $\mathcal{S}_1$ of the form      
\begin{align}
\mathcal{S}_1 = \{(r, \theta) \in \mathcal{R} \, : \, 0<\theta_1^*\leq |\theta| \leq \theta_0^*\leq \tfrac{\pi}{n}\}
\end{align}
for any fixed positive constants  $\theta_0^* >  \theta_1^*>0$.

The situation becomes more complicated if $|\theta|\rightarrow 0$ as
$r\rightarrow \infty$. To see what happens, we begin by considering $L\circ
S_\alpha^\lambda(r, \theta)$ as $\lambda \rightarrow \infty$ for
$\alpha>0$.  In this setting as $\lambda \rightarrow \infty$,
\begin{align*}
L \circ S_\alpha^\lambda(r, \theta)& = r \partial_r + n \theta \partial_{\theta} + \lambda^{d+\alpha-(n+2)}r^{d-(n+2)}g_d(0) \partial_\theta + \lambda^{2\alpha -(n+2)} \frac{\sigma^2}{2 r^{n+2}} \partial_\theta^2  + O(\lambda^{-1}) + o(\lambda^{d+\alpha-(n+2)}) 
\end{align*}
where $d\in \{ 0,1,\ldots, n+1\}$ is the largest index for which
$g_{d}(0)\neq 0$.  Recall that the $g_k$ are the coefficient functions of $Q(r,\theta)$ introduced in \eqref{eqn:gen_pol}.  If no such index
exists, then
\begin{align*}
\lambda^{d+\alpha-(n+2)} g_d(0) \partial_\theta + o(\lambda^{d+\alpha-(n+2)}) 
\end{align*}
is simply absent from the expression above and the following analysis still holds regardless.

First realize that if $\alpha>0$ is sufficiently small, the linearization of $T_1$ 
\begin{align*}
T_2 = r \partial_r + n \theta \partial_\theta
\end{align*}
gives the leading order asymptotic behavior  as $r\rightarrow
\infty$. Recalling the discussion of $T_1$ above, we see that
even when $\alpha=0$, we have that $L$ is asymptotically well
approximated by $T_2$ provided $|\theta|$ is small since 
\begin{equation*}
  L\circ S_0^\lambda(r, \theta) = r\partial_r +
  n\theta \partial_\theta + O(\lambda^{-1}) +O (\theta^2)\,.
\end{equation*}
In particular, one has $L \approx T_2$ as $r\rightarrow \infty$
provided the paths to infinity are restricted to a region of the form
\begin{align*}
\mathcal{S}_2 = \{(r, \theta) \in \mathcal{R} \, : \, b(r)\leq |\theta| \leq \theta_1^* \}
\end{align*}
were $\theta_1^*>0$ is small and the boundary curve $b(r)$ has the
property that $b(r) \rightarrow 0$ sufficiently slowly as
$r\rightarrow \infty$.  To define $b$ explicitly and also discover what happens to $L$ when $|\theta|\leq b(r)$, we must see for what powers of $\alpha$ other terms in the expansion $L \circ
S_\alpha^\lambda(r, \theta)$ become asymptotically relevant as
$\lambda \rightarrow \infty$.

We now claim that Assumption \ref{assumption:nlot} allows to disregard 
\begin{align*}
\lambda^{d +\alpha-(n+2)} r^{d-(n+2)}g_d(0) \partial_\theta + o(\lambda^{d + \alpha-(n+2)} )
\end{align*}             
in $L \circ S_\alpha^\lambda (r, \theta)$ as $\lambda \rightarrow
\infty$ for all choices of $\alpha\geq 0$.  Indeed if Assumption \ref{assumption:nlot} is satisfied, then it follows that $d\leq \lfloor \frac{n}{2}\rfloor $.  Hence, the value of
$\alpha \geq 0 $ where
\begin{align}
\label{eqn:lotI}
&\lambda^{d+\alpha-(n+2)}r^{d-(n+2)}g_d(0) \partial_\theta + \lambda^{2\alpha
  -(n+2)} \frac{\sigma^2}{2 r^{n+2}} \partial_\theta^2+
O(\lambda^{-1}) + o(\lambda^{d+\alpha-(n+2)})
\end{align}
is $O(1)$ as $\lambda \rightarrow \infty$ is precisely
\begin{align*}
\alpha = \frac{n+2}{2}. 
\end{align*}
But note that for $\alpha \geq \tfrac{n+2}{2}$, by Assumption \ref{assumption:nlot} the term 
\begin{align*}
\lambda^{2\alpha -(n+2)} \frac{\sigma^2}{2 r^{n+2}} \partial_\theta^2
\end{align*}
dominates all the remaining contributions in \eqref{eqn:lotI} in $\lambda$ as $\lambda \rightarrow \infty$.

If Assumption \ref{assumption:nlot} is not made, then the term 
\begin{align}
\label{eqn:menaceI}
\lambda^{d+\alpha -(n+2)} r^{d-(n+2)}g_d(0) \partial_\theta
\end{align}
initially dominates the remaining terms in \eqref{eqn:lotI} as
$\lambda\rightarrow \infty$.  However, at a certain threshold in
$\alpha$, \eqref{eqn:menaceI} can cancel with
$n\theta \partial_\theta$ implying that we must expand $L\circ
S_\alpha^\lambda(r,\theta)$ further asymptotically in $\lambda$ to
uncover the next lower-order term.  The total analysis in the general
case is quite involved and requires another novel idea.  This, in
addition to our desire to focus first on the general elements of the
construction, is why we save it for Part II \cite{HerzogMattingly2013II}.

Operating under Assumption \ref{assumption:nlot}, observe that
the above analysis suggests that the operator
\begin{align}
A = r \partial_r + n \theta \partial_\theta + \frac{\sigma^2}{2 r^{n+2}} \partial_\theta^2
\end{align}
satisfies $L \approx A$ as $r\rightarrow \infty$ in the remaining
portion of $\mathcal{R}$, namely ${\{(r,\theta)\in \mathcal{R}: |\theta|
\leq b(r)\}}$.  To
determine the correct choice of $b(r)$ note that
\begin{align*}
2\alpha- (n+2) \geq 0 \iff \alpha \geq \frac{n+2}{2}.  
\end{align*}
Specifically, the threshold $\alpha=\tfrac{n+2}{2}$ is precisely
where 
\begin{align*}
 r\partial_r + n\theta \partial_\theta+ \lambda^{2\alpha -(n+2)} \frac{\sigma^2}{2 r^{n+2}} \partial_\theta^2 =O(1)
\end{align*} 
as $\lambda \rightarrow \infty$.  Therefore, we now definitively   set 
\begin{align*}
  &\mathcal{S}_2 = \{(r, \theta) \in \mathcal{R}\, : \, r\geq r^*,\,  \eta^* r^{-\frac{n+2}{2}} \leq |\theta| \leq \theta_1^* \}\\
  &\mathcal{S}_3 = \{(r, \theta) \in \mathcal{R}\, : \, r\geq r^*,\,
  |\theta| \leq \eta^* r^{-\frac{n+2}{2}}, |\theta|\leq \theta_1^* \}
\end{align*}
where $\eta^*>0$ is a constant which will be chosen later.
Intuitively though, $\eta^*$ should be thought of as large so that in
$\mathcal{S}_2$ the term $\frac{\sigma^2}{2r^{n+2}} \partial_\theta^2$
is small in comparison to the rest of $A$.  Hence we still expect the
approximation $L \approx T_2$ as $r\rightarrow \infty$ to hold
when paths to infinity are restricted to $\mathcal{S}_2$ even though
$\frac{\sigma^2}{2r^{n+2}} \partial_\theta^2$ does not vanish on the
lower boundary curve.

\begin{remark}
Notice that for any choice of $\eta^*, \theta_1^*>0$, we may always
pick $r^*>0$ large enough so that the bound $|\theta|\leq \theta_1^*$
can be removed from the definition of $\mathcal{S}_3$.  In particular
after making this choice, $\mathcal{S}_1, \mathcal{S}_2,
\mathcal{S}_3$ are  a elements of a good radial partition of
$\mathcal{R}$ as discussed in Section~\ref{subsec:Bas_structI}
and Section~\ref{sec:reductions}.   
\end{remark}

In summary, under Assumption \ref{assumption:nlot} we have found the
\emph{asymptotic operators} $T_1$, $T_2$ and $A$ which ``approximate"
$L$ well for $r>0$ large in the regions $\mathcal{S}_1$,
$\mathcal{S}_2$, and $\mathcal{S}_3$ respectively.

\subsection{Overview of Local Lyapunov Function Construction}
\label{sec:lfdefbp}


As mentioned in Section~\ref{subsec:Bas_structI}, most notably
in Corollary~\ref{cor:fluxIto}, we will
construct our Lyapunov function in piecewise fashion. The regions used
in the construction are precisely the $\{\mathcal{S}_i : i=1,2,3\}$ from Section~\ref{sec:PiecewiseI}.
In particular, we will construct a $C^2$ function $\psi_i: \mathcal{S}_i \rightarrow [0, \infty)$
such that 
\begin{align}
  \label{eq:LyapPoisson}
  (L \psi_i)(r,\theta) \approx - c\, \varphi_i(r,\theta)
\end{align}
for $r$ large, $(r, \theta) \in \mathcal{S}_i$, and such that $\psi_i(r,
\theta) \rightarrow \infty$ as $r\rightarrow \infty$ in
$\mathcal{S}_i$.  In the expression above, $c >0$ is a constant and
$\varphi_i$ is a non-negative continuous function satisfying $\varphi_i(r,
\theta)\rightarrow \infty$ as $r\rightarrow \infty$, $(r, \theta) \in
\mathcal{S}_i$. We will then use the natural extensions of the
$\psi_i$'s and $\varphi_i$'s, as defined in
Section~\ref{subsec:Bas_structI}, to yield a Lyapunov pair.

To do this, we will heavily employ the 
asymptotic analysis carried out in Section \ref{sec:PiecewiseI}.  That is, we will
aim to construct $\psi_1, \psi_2, \psi_3$ satisfying \eqref{eq:LyapPoisson} but with
$L$ replaced by the respective asymptotic operators $T_1, T_2, A$.
The advantage of this methodology is that the operators $T_1, T_2, A$ are much simpler than $L$ yet they 
approximate $L$ well for $r\gg0$ in the appropriate region in space.  However, we must be careful to induce certain homogeneities in the $\psi_i$ 
so that the heuristic analysis in Section \ref{sec:PiecewiseI} can be made rigorous, at least when $L$ is applied to $\psi_1, \psi_2, \psi_3$.  Let us now further illustrate these points.

\begin{figure}
  \centering
  \begin{tikzpicture}[scale=2]
 \draw[thick,color=gray,opacity=.75] (-3.14,0) -- (3.14,0) node[right] {$\theta$};
 \draw[thick,color=gray,opacity=.75] (0,3) node[above] {$r$} --  (0,-.1) node[below]
{$0$};
 \draw[thick,color=gray] (-3.14,0.1) -- (-3.14,-0.1) node[below]
 {$\frac{-\pi}{n}$};
 \draw[thick,color=gray] (3.14,0.1) -- (3.14,-0.1) node[below]
 {$\frac{\pi}{n}$};
\draw[thick,color=gray] (-1.57,0.1) -- (-1.57,-0.1) node[below]
 {$\frac{-\pi}{2n}$};
 \draw[thick,color=gray] (1.57,0.1) -- (1.57,-0.1) node[below]
 {$\frac{\pi}{2n}$};

\draw[thin,pattern=dots,pattern color
  = black!70,opacity=.5]  ({2/3*3.14},3) --
({2/3*3.14},.5) -- (3.14,.5) --  (3.14,3) -- ({2/3*3.14},3) ;

\draw[thin,pattern=dots,pattern color
  = black!70,opacity=.5]  (-{2/3*3.14},3) --
(-{2/3*3.14},.5) -- (-3.14,.5) --  (-3.14,3) -- (-{2/3*3.14},3) ;

 \draw[thick,color=blue] ({2/3*3.14},3) -- ({2/3*3.14},0)
 node[below]{$\theta_0^*$};
\draw[thick,color=blue] ({-2/3*3.14},3) -- ({-2/3*3.14},0)
 node[below]{$-\theta_0^*$};
\draw[thin,pattern=crosshatch,pattern color
  = blue!60,opacity=.5]  ({2/3*3.14},3) --
({2/3*3.14},.5) -- (1/2,.5) --  (1/2,3) -- ({2/3*3.14},3) ;
\draw[thin,pattern=crosshatch,pattern color
  = blue!60,opacity=.5]  (-{2/3*3.14},3) --
(-{2/3*3.14},.5) -- (-1/2,.5) --  (-1/2,3) -- (-{2/3*3.14},3) ;

\draw[thick,color=blue] (1/2,3) -- (1/2,0)
 node[below]{$\theta_1^*$};
\draw[thick,color=blue] (-1/2,3) -- (-1/2,0)
 node[below]{$-\theta_1^*$};

\draw[color=black,thin,color=black,fill=black!30,opacity=.5]   ({-2/3*3.14},3) -- ({-2/3*3.14},.5) -- (
  -.519615242  ,.5) --  plot[domain=.5:3,samples=500,parametric]
 function{-.1*(1.5/t)**(3/2)  ,t } --  (-0.035355339,3);

\draw[color=black,thin,color=black,fill=black!30,opacity=.5]   ({2/3*3.14},3) -- ({2/3*3.14},.5) -- (
  .519615242  ,.5) --  plot[domain=.5:3,samples=500,parametric]
 function{.1*(1.5/t)**(3/2)  ,t } --  (0.035355339,3);

\node[color=red] at (0,.9) {$\mathcal{S}_3$};
\node[color=black] at (.33,.9) {$\mathcal{S}_2$};
\node[color=blue] at (1.33,.9) {$\mathcal{S}_1$};
\node[color=black] at ({5/6*3.14},0.9) {$\mathcal{S}_0$};

\node[color=red] at (0,.9) {$\mathcal{S}_3$};
\node[color=black] at (-.33,.9) {$\mathcal{S}_2$};
\node[color=blue] at (-1.33,.9) {$\mathcal{S}_1$};
\node[color=black] at (-{5/6*3.14},0.9) {$\mathcal{S}_0$};

%

\draw[thin,color=red,,pattern=grid,pattern color
  =red!60,opacity=.5,] plot[domain=.5:3,samples=500,parametric]
 function{-.1*(1.5/(3.5-t))**(3/2)  ,3.5-t } -- (-.519615242,1/2) --
 (.519615242,.5) -- plot[domain=.5:3,samples=500,parametric]
 function{.1*(1.5/(1*t))**(3/2)  , t } -- (0.035355339,3) -- (-0.035355339,3);

\draw[thick,color=blue] (-3.14,.5) -- (3.14,.5) node[right]{$r^*$};
\end{tikzpicture}
\caption{The regions $\mathcal{S}_0, \mathcal{S}_1, \mathcal{S}_2,
  \mathcal{S}_3$. In the diagram, $\theta_1^*>0$ is chosen much larger
  than in reality to make visualization easier. The regions
  $\mathcal{S}_1$, $\mathcal{S}_2$, and $\mathcal{S}_3$ are discussed
  in Section~\ref{sec:PiecewiseI}--\ref{sec:mot_lya} while
  $\mathcal{S}_0$ is only introduced in Section~\ref{sec:mot_lya}. }
  \label{fig:caseI}
\end{figure}
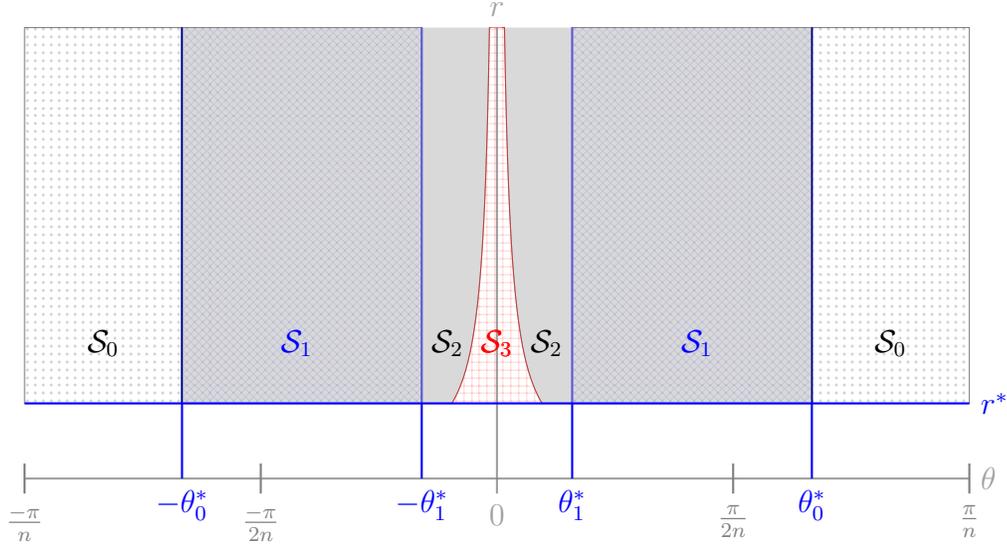

For $(r, \theta) \in \mathcal{S}_1$ with $r\gg0$, we have that $L \approx T_1$. This
suggests defining $\psi_1$ as the solution of the equation
\begin{align}
    \label{eq:LyapPoisson1}
  (T_1 \psi_1)(r,\theta) = - c\, \varphi_1(r,\theta)  
\end{align}
on $\mathcal{S}_1$ with the appropriate boundary conditions.  However, for
\eqref{eq:LyapPoisson1} to imply that \eqref{eq:LyapPoisson} holds for
large $r$ in $\mathcal{S}_1$, we need to know that the terms in
$(L-T_1) \psi_1$ are negligible asymptotically as $r\rightarrow
\infty$, $(r, \theta) \in \mathcal{S}_1$, when compared to those in
$T_1 \psi_1$ .  By the analysis of Section \ref{sec:PiecewiseI}, we
expect the terms $(L-T_1) \psi_1$ to be negligible if $\psi_1$ scales
homogeneously of degree $p>0$ under $S_0^\lambda$, for then $\psi_1(r, \theta) =r^p C(\theta)$ for some function $C(\theta)$, so
the action of $L$ on $\psi_1$ will mimic the action of $S_0^\lambda$
on $L$.

To see why we are able to construct $\psi_1$ so as to have this
homogeneously scaling property, first observe that $T_1$ scales
homogeneously under $S_0^\lambda$.  Therefore if one chooses
$\varphi_1$ in \eqref{eq:LyapPoisson1} to scale homogeneously under
$S_0^\lambda$, then as the solution to \eqref{eq:LyapPoisson1},
$\psi_1$ will, with the appropriate boundary data, also scale
homogeneously under $S_0^\lambda$ with the same scaling exponent as
$\varphi_1$. If we chose the scaling exponent to be positive and
$\varphi_1$ to be continuous, then it will be relatively easy to see
that both $\psi_1(r, \theta), \varphi_1(r, \theta)\rightarrow \infty$
as $r\rightarrow \infty$, $(r, \theta) \in \mathcal{S}_1$, and
\begin{align*}
(L \psi_1)(r, \theta) \leq -m \, \varphi_1(r,\theta) +b,
\end{align*}
for some $m,b>0$.

Jumping ahead to region $\mathcal{S}_3$, since $L\approx A$ for large
$r$, it makes sense to choose $\psi_3$ as the solution of the equation
\begin{align}
    \label{eq:LyapPoisson3}
  (A \psi_3)(r,\theta) = - c\, \varphi_3(r,\theta)
\end{align}
on $\mathcal{S}_3$ where $c>0$ and $\varphi_3(r, \theta) \rightarrow
\infty$ as $r\rightarrow \infty$, $(r, \theta) \in \mathcal{S}_3$.
Since $\lambda \mapsto S_{\frac{n+2}2}^\lambda(r^*,\theta)$ covers
$\mathcal{S}_3$ as $\theta$ varies in $\mathcal{S}_3$ and $A$ is invariant  under
$S_{\frac{n+2}2}^\lambda$, the same reasoning used above
suggests that we should choose $\varphi_3$ to be homogeneous of
positive degree under $S_{\frac{n+2}2}^\lambda$.  Again, it will
then follow that, with the appropriate boundary data, $\psi_3$ and all
of its derivatives are asymptotically homogeneous under
$S_{\frac{n+2}2}^\lambda$ and that $(L-A)\psi_3$ is negligible
relative to $A\psi_3$. Thus by the results of Section
\ref{sec:PiecewiseI}, we anticipate  the following bound
\begin{align*}
(L\psi_3)(r, \theta) \leq -m \varphi_3(r, \theta) + b,
\end{align*}  
on $\mathcal{S}_3$ for some positive constants $m,b$.

The set $\mathcal{S}_2$ serves as a transition region between the two
other sets $\mathcal{S}_1$ and $\mathcal{S}_3$. Hence $\psi_2$ must
connect $\psi_1$, which scales homogeneously under $S_0^\lambda$ in
$\mathcal{S}_1$, to $\psi_3$, which scales homogeneously under
$S_{\frac{n+2}2}^\lambda$ in $\mathcal{S}_3$. Thus we should setup the
equation so that $\psi_2$ and its derivatives asymptotically scale
homogeneously under both mappings, otherwise the $\psi_i$ together
could not be extended to a continuous function as required in
Corollary~\ref{cor:fluxIto}.  Requiring this duel scaling property is
further suggested by the fact that both paths of the form $\lambda
\mapsto S_0^\lambda(r^*,\theta)$ and $\lambda \mapsto
S_{\frac{n+2}2}^\lambda(r^*,\theta)$ are required to cover
$\mathcal{S}_2$.

Since $L \approx T_2$ for $r>0$ large in $\mathcal{S}_2$, we
take $\psi_2$ as the solution of
\begin{align}
    \label{eq:LyapPoisson2}
  (T_2 \psi_2)(r,\theta) = - c\, \varphi_2(r,\theta)
\end{align}
on $\mathcal{S}_2$. Since $T_2$ is homogeneous of degree zero under
$S_\alpha^\lambda$ for any $\alpha\geq 0$, choosing $\varphi_2 $ to
scale homogeneously under $S_\alpha^\lambda$ for all $\alpha\geq 0$
with positive degree will, with the right choice of boundary data,
lead us to a $\psi_2$ which will asymptotically scale  homogeneously under
$S_\alpha^\lambda$ for all $\alpha\geq 0$ with the same positive degree.   
 
\subsection{Defining the Local Lyapunov Functions}
\label{sec:mot_lya}

So far we have subdivided the principal wedge $\mathcal{R}$ into the
following three regions:
\begin{align*}
  &\mathcal{S}_1 = \{ r \geq r^* , \, 0<\theta_1^*\leq |\theta| \leq
  \theta_0^* \leq \tfrac{\pi}{n} \} \\
  & \mathcal{S}_2=\{ r \geq r^* , \, |\theta| \leq \theta_1^*,\,
  r^{\frac{n+2}2}
  |\theta| \geq \eta^* \}\\
  &\mathcal{S}_3=\{ r \geq r^* ,\, |\theta|\leq \theta_1^*, \,
  r^{\frac{n+2}2} |\theta| \leq \eta^* \}
\end{align*}
where $r^*, \,\eta^*>0$ (see Figure~\ref{fig:caseI}).  To initialize the construction procedure, we
will in fact need an additional region $\mathcal{S}_0$ given by
\begin{align*}
  \mathcal{S}_0 = \{ r\geq r^*, \,
  \theta_0^*\leq |\theta| \leq \tfrac{\pi}{n} \}
\end{align*} 
where we fix $\theta_0^* \in (\frac{\pi}{2n}, \frac{\pi}{n})$.
Because the vector field induced by $T_1$ points radially inward in
$\mathcal{S}_0$, defining
\begin{align}
\psi_0(r, \theta) = r^p, \, \, p>0,
\end{align}
and noting that $\psi_0$  clearly scales homogeneous under $S_0^\lambda$, it
will follow easily that
\begin{align*}
L \psi_0(r, \theta) \leq - m r^p + b,
\end{align*}  
on $\mathcal{S}_0$ for some positive constants $m,b$.  The function
$\psi_0$ will now serve as the boundary condition for the equation
satisfied by $\psi_1$ which is defined on the neighboring region $\mathcal{S}_1$.

\begin{remark}
\label{rem:parameter}  
  Each of the regions comes with a number of parameters such as
  $\theta_1^*>0$, $\eta^*>0$ and $r^*>0$. Instead of
  giving these constants specific values at the start, it is much
  easier to leave them as parameters because they will need to be
  adjusted throughout the construction.  To assure that each of these adjustments is
  consistent, we note  that throughout we will always pick $\theta_1^*>0$
  sufficiently small, then $\eta^*=\eta^*(\theta_1^*)>0$ sufficiently
  large, and then $r^*=r^*(\theta_1^*, \eta^*)>0$ sufficiently large.

\end{remark}


\subsubsection*{The Construction in $\mathcal{S}_1$}
Choosing $p \in (0,n)$, the function $\psi_1$ is defined as the
solution of the following equation
\begin{equation}
\label{caseI:psi_1}
  \begin{cases}
    (T_1 \psi_1)(r, \theta) &= - h_1 r^p |\theta|^{-q}\\
    \psi_1(r,\pm \theta_0^*) &= \psi_0(r,\pm \theta_0^*)
  \end{cases}
\end{equation}
on $\mathcal{S}_1$ where $h_1>0$ and $q \in (p/n, 1)$.  

\begin{remark}
The restrictions on $p,q$ stem from the dynamics in $\mathcal{S}_3$.  In particular, we will eventually see why they are needed.  
\end{remark}

Notice that the form of the righthand side of
\eqref{caseI:psi_1} scales homogeneously under $S_0^\lambda$ as
suggested by the considerations in Section~\ref{sec:lfdefbp}. The
dependence on $\theta$ is introduced to facilitate matching with
$\psi_2$ along the
boundary $\mathcal{S}_1 \cap \mathcal{S}_2$.  Also, it is important to point out that since we have
picked $\theta_0^* > \frac{\pi}{2n}$, the PDE given in
\eqref{caseI:psi_1} is not well-defined with the given boundary data
since some of the characteristics along $T_1$ cross $r=r^*$ before
reaching the lines $|\theta|=\theta^*_0$. Because it is convenient to
only give data on the lines $|\theta|=\theta^*_0$, we slightly modify the
domain of definition of the PDE to be
\begin{align*}
  \widetilde{ \mathcal{S}}_1=\Big\{(r, \theta) \in \mathcal{R}\, : \,
  0<\theta_1^*\leq |\theta|\leq \theta_0^*, \, r|\sin(n\theta_0^*)|^{\frac{1}{n}}\geq r^*\Big\}.
\end{align*}
\noindent With this modification, all characteristics now exit the domain
through the boundary $r\geq r^*, \, |\theta|=\theta^*_0$.  Thus,
solving \eqref{caseI:psi_1}, we see that for $(r,\theta) \in
\widetilde{\mathcal{S}}_1$
\begin{align}
\label{expr:psi_1}
\psi_1(r,\theta) = \frac{r^p}{|\sin(n\theta)|^{\frac{p}{n}}}\bigg( \sin(n\theta_0^*)^{\frac{p}{n}} + h_1 \int_{|\theta|}^{\theta_{0}^*} \frac{\sin(n\alpha)^{\frac{p}{n}-1}}{\alpha^q}\, d\alpha \bigg).  
\end{align}
In particular, we note that $\psi_1$ can be extended smoothly to all
of $\mathcal{S}_1$ and is a homogeneous function of degree $p$ under
$S_0^\lambda$.  Moreover, $\psi_1(r, \theta)\geq 0$ on $\mathcal{S}_1$
and $\psi_1(r, \theta) \rightarrow \infty$ as $r\rightarrow \infty$
with $(r, \theta) \in \mathcal{S}_1$.


\subsubsection*{The Construction in $\mathcal{S}_2$}
Let $\psi_2$ be defined on $\mathcal{S}_2$ by 
\begin{equation}
  \begin{cases}
  \label{caseI:psi_2}
    (T_2 \psi_2 )(r, \theta)= - h_2 r^p | \theta|^{-q}\\
    \psi_2(r,\pm \theta_1^*) = \psi_1(r,\pm \theta_1^*)
  \end{cases}
\end{equation}
where $h_2>0$.  This time, the PDE above is clearly well-defined.
In light of Section~\ref{sec:lfdefbp}, observe that the righthand side of \eqref{caseI:psi_2} scales
homogeneously under $S_\alpha^\lambda$ for all $\alpha\geq0$.

Using the method of characteristics we see that
\begin{align}
\label{expr:psi_2}
 \psi_2(r,\theta)= \bigg(
  (\theta_1^*)^{\frac{p}{n}} \psi_1(1,\theta_1^*) - h_2
  \frac{(\theta_1^*)^{\frac{p}n-q}}{qn -p}
  \bigg)\frac{r^p}{|\theta|^{\frac{p}{n}}} + \frac{h_2}{qn-p}
  \frac{r^p}{|\theta|^q}
\end{align}
In particular, we notice that $\psi_2$ is homogeneous under
$S_0^\lambda$ of degree $p$ and is the sum of two terms, each of which is
homogeneous under $S_\alpha^\lambda$ for every $\alpha\geq 0$ (though
each term has a different degree).
Moreover, on $\mathcal{S}_2$
\begin{align*}
\psi_2(r, \theta) \geq c r^p |\theta|^{-\frac{p}{n}}
\end{align*} 
for some $c>0$.  Hence $\psi_2(r, \theta) \geq 0$ on $\mathcal{S}_2$
and $\psi_2(r, \theta) \rightarrow \infty$ as $r\rightarrow \infty$
with $(r, \theta) \in \mathcal{S}_2$.


 \subsubsection*{The Construction in $ \mathcal{S}_3$}
 \label{sec:constructionnoise}
To define the final local Lyapunov function, we would like to assert that it is the solution  $\psi_3$ on $\mathcal{S}_3$ of the problem
\begin{equation}
\label{caseI:psi_3}
  \begin{cases}
   \big(A \psi_3\big)(r, \theta) = - h_3 r^{p_3} & \\
    \psi_3(r,\theta) = \psi_2(r,\theta), & r^{\frac{n+2}2}|\theta|= \eta^*
  \end{cases}
  \end{equation} 
  where $h_3>0$ and $p_3 = p + q\frac{n+2}{2}.$  The conditions in the results of Chapter 9 of \cite{Ok_98} are not met, however, so we cannot immediately apply them to see that solutions of the PDE exist and are unique.  Nevertheless, because the problem above can be essentially converted to a one dimensional problem, we will see that defining $\psi_3$ in this way is indeed permissible.       
  
To see why, let $\eta= r^{\frac{n+2}{2}}\theta$.  Then in the variables $(r, \eta)$, the PDE above transforms as follows:
\begin{equation}
\label{caseI:psi_3eta}
  \begin{cases}
   \big(\hat{A} \hat{\psi}_3\big)(r, \eta) = - h_3 r^{p_3} & \\
    \hat{\psi}_3(r,\eta) = \hat{\psi}_2(r,\eta), &| \eta| = \eta^*
  \end{cases}
  \end{equation}
  where $\hat{f}(r, \eta)= f(r, \theta(r, \eta))=f(r, \eta r^{-\frac{n+2}{2}})$ and $ \hat{A}= r\partial_r +\big(\frac{3}{2}n+1\big) \eta \partial_\eta + \frac{\sigma^2}{2} \partial_\eta^2 . $  Now let $\eta_t$ be the solution of the Gaussian SDE
 \begin{align}
\label{eq:eta}
d\eta_t = \big( \tfrac{3}{2}n+1\big)\eta_t\, dt + \sigma \, dW_t 
\end{align} 
and define ${\tau= \inf \{ t>0 \, : \, |\eta_t| = \eta^*\}}$.  Since $r_t$ is strictly increasing for all $t<\tau$ and $r_0 \geq r^*$, then formally solving the PDE \eqref{caseI:psi_3eta} produces  \begin{align}
  \label{expr:psi_3eta}
\psi_3(r, \theta(r, \eta)) &= c_1 r^{p_3} \E_{\eta} e^{p_3 \tau} +c_2
r^{p_2} \E_{\eta}e^{p_2\tau}- c_3 r^{p_3}
\end{align}
where $p_2 = p+ \frac{p}{n} \frac{n+2}{2}$ and 
\begin{align*}
  c_1 & = \frac{h_3}{p_3}+ \frac{1}{(\eta^*)^q}\frac{h_2}{qn-p}, \,\, \,c_2 = \frac{1}{(\eta^*)^{p/n}}\bigg[ (\theta_1^*)^{p/n} \psi_1(1,
  \theta_1^*) - h_2 \frac{(\theta_1^*)^{p/n -q}}{qn-p}\bigg],\, \, \, c_3= \frac{h_3}{p_3}.
\end{align*}
In a moment, we will show that the expression \eqref{expr:psi_3eta} itself makes sense, in that $\eta\mapsto \E_{\eta} e^{p_i \tau}\in C^2([-\eta^*, \eta^*])$ for $i=2,3$, and that the righthand side of \eqref{expr:psi_3eta} is the unique solution of \eqref{caseI:psi_3eta} which is bounded for fixed $r$ in $\eta$ on $[-\eta^*, \eta^*]$.  Hence, converting back to the variables $(r, \theta)$, 
\begin{align}
\label{expr:psi_3}
\psi_3(r, \theta)= c_1 r^{p_3} \E_{\eta(r, \theta)} e^{p_3 \tau} +c_2
r^{p_2} \E_{\eta(r, \theta)}e^{p_2\tau}- c_3 r^{p_3}
\end{align}  
is the unique solution of the original PDE \eqref{caseI:psi_3} with this boundedness property in $\eta$.

To establish the necessary claims, we prove the following result in Appendix~\ref{appendixA}:
\begin{lemma}\label{lem:exp_webf}
  Fix a constant $c\in \R$, let $\eta^* >|c|$ and
  define the stopping time 
  \begin{align*}
  \tau_c = \inf_{t>0} \{ t>0 \, : \, \eta_t \notin
  [-\eta^*+c, \eta^*+c]\}.
  \end{align*} 
  If $G_{a,c}(\eta) := \E_{\eta}e^{a \tau_c}$ and $0<a< \frac{3}{2}n+1$, then for all $\eta^*$ large enough we have the following: 
  \begin{enumerate}
 \item $G_{a,c}\in C^{\infty}([-\eta^*+c, \eta^*+c])$.  Moreover, 
\begin{align}
\label{web:asmI}
G_{a,c}'(\pm \eta^* +c ) = \mp \frac{2a}{3n+2}(\eta^*)^{-1} +
o((\eta^*)^{-1}) \text{ as } \eta^* \rightarrow \infty.
\end{align}   
\item \label{conclusionb}$G_{a,c}$ is the unique solution of \eqref{G:eqI}.  
\end{enumerate}
\end{lemma}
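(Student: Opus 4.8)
\emph{Plan.} Write $\mu:=\tfrac{3}{2}n+1$, so that $3n+2=2\mu$, and let $\mathcal{A}:=\tfrac{\sigma^2}{2}\partial_\eta^2+\mu\,\eta\,\partial_\eta$ be the generator of the diffusion \eqref{eq:eta}. The plan is to identify $G_{a,c}$ with the unique classical solution of the Feynman--Kac boundary value problem \eqref{G:eqI}, i.e.\ of $\mathcal{A}G+aG=0$ on $I_c:=(-\eta^*+c,\,\eta^*+c)$ with $G\equiv 1$ on $\partial I_c$, and then to read off its smoothness and uniqueness from this identification and its boundary derivatives from an analysis of \eqref{G:eqI} near the endpoints.

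\emph{Finiteness and identification.} Since $0<a<\mu$, the function $\bar h(\eta):=\exp\!\big(\tfrac{a}{\sigma^2}((\eta^*+|c|)^2-\eta^2)\big)$ satisfies $(\mathcal{A}+a)\bar h=\tfrac{2a}{\sigma^2}(a-\mu)\eta^2\,\bar h\le 0$ on $\R$, with $\bar h>0$, $\bar h\ge 1$ on $\partial I_c$, and $\inf_{\partial I_c}\bar h=1$. Applying It\^o's formula to $e^{at}\bar h(\eta_t)$, stopping at $\tau_c\wedge T$, using that the result is a nonnegative supermartingale up to $\tau_c$ and that $\tau_c<\infty$ a.s.\ (the diffusion is non-degenerate on a bounded interval), Fatou's lemma gives $\E_\eta e^{a\tau_c}\le \bar h(\eta)$ for $\eta\in\overline{I_c}$; in particular $G_{a,c}$ is finite and bounded on $\overline{I_c}$ with $G_{a,c}\le\bar h$. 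The positive supersolution $\bar h$ also forces the principal Dirichlet eigenvalue of $-\mathcal{A}$ on $I_c$ to exceed $a$, so $a$ is not a Dirichlet eigenvalue and \eqref{G:eqI} has a unique classical solution $\widetilde G$; as the ODE has constant leading coefficient and polynomial coefficients, $\widetilde G$ is real-analytic on $\R$, hence $C^\infty$ on $\overline{I_c}$. Finally $e^{a(t\wedge\tau_c)}\widetilde G(\eta_{t\wedge\tau_c})$ is a local martingale, bounded on $[0,T]$ for each $T$; letting $T\to\infty$, using $\widetilde G(\eta_{\tau_c})=1$, the domination $e^{a(T\wedge\tau_c)}|\widetilde G(\eta_{T\wedge\tau_c})|\le e^{a\tau_c}\sup_{\overline{I_c}}|\widetilde G|$ and $\E_\eta e^{a\tau_c}<\infty$, dominated convergence gives $\widetilde G=G_{a,c}$. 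This proves the $C^\infty$ assertion in part (a) and conclusion \ref{conclusionb}); note it holds for every $\eta^*>|c|$, so ``$\eta^*$ large'' is needed only for the next step.

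\emph{Boundary derivatives.} At the right endpoint $\eta_R:=\eta^*+c$ rescale by $\eta=\eta_R-\varepsilon v$ with $\varepsilon:=\tfrac{\sigma^2}{2\mu\eta_R}$; then \eqref{G:eqI} becomes $G_{vv}-(1-\tfrac{\varepsilon v}{\eta_R})G_v+\tfrac{a\sigma^2}{2\mu^2\eta_R^2}G=0$ with $G|_{v=0}=1$, a regular perturbation on each fixed $v$--range of $G_{vv}-G_v=0$ as $\eta^*\to\infty$. In these coordinates the a priori bound reads $G_{a,c}\le\bar h=e^{(a/\mu)v\,(1+o(1))}$ with $a/\mu<1$, which rules out the $e^v$ mode of the leading problem, forcing $G^{(0)}\equiv 1$; the first correction solves $G^{(1)}_{vv}-G^{(1)}_v=-\tfrac{a\sigma^2}{2\mu^2\eta_R^2}$ with $G^{(1)}(0)=0$ and again no $e^v$ growth, hence $G^{(1)}(v)=\tfrac{a\sigma^2}{2\mu^2\eta_R^2}v=\tfrac{a}{\mu\eta_R}(\eta_R-\eta)$. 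Thus $G_{a,c}(\eta)=1+\tfrac{a}{\mu\eta_R}(\eta_R-\eta)+o(\eta_R-\eta)$ near $\eta_R$, so
\[
G_{a,c}'(\eta_R)=-\frac{a}{\mu\,\eta_R}\,(1+o(1))=-\frac{2a}{3n+2}\,(\eta^*)^{-1}+o\big((\eta^*)^{-1}\big),
\]
since $\eta_R=\eta^*+c$; the left endpoint $-\eta^*+c$ is identical with the sign reversed, which is \eqref{web:asmI}.

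\emph{Main obstacle.} The first two steps are routine; the difficulty is the rigorous version of the boundary--layer argument. The global supersolution $\bar h$ over-estimates $G_{a,c}$ by a factor exponentially large in $\eta^*$, so on its own it only sandwiches $\eta_R\,G_{a,c}'(\eta_R)$ between two constants, not to the sharp value $-\tfrac{2a}{3n+2}$; obtaining the leading constant requires controlling how $G_{a,c}$ passes from the value $1$ at $\eta_R$ to its much larger bulk size across the $O(1/\eta_R)$--wide layer. Making this uniform in $\eta^*$ --- either by justifying the suppression of the $e^v$ mode and the smallness of the perturbation terms with explicit error bounds, or by writing $G_{a,c}$ in terms of confluent hypergeometric (parabolic--cylinder) functions and invoking their asymptotics --- is the technical crux, and it is precisely here that the hypothesis $a<\tfrac{3}{2}n+1$ is essential, as it forces the growth exponent $a/\mu$ to lie strictly below $1$.
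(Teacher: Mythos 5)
Your supersolution route to finiteness and identification is correct and genuinely different from the paper's. The paper establishes $\E_\eta e^{a\tau_c}<\infty$ directly by computing the Gaussian density of $\eta_t$ and integrating a tail bound, whereas you construct the explicit supersolution $\bar h$ of $(\mathcal{A}+a)u=0$ on the interval; both are short, and yours has the advantage of also delivering the a priori bound $G_{a,c}\le\bar h$ and, with a small amount of additional care, uniqueness of the Dirichlet problem via the principal-eigenvalue comparison. The identification $\widetilde G = G_{a,c}$ via the local martingale $e^{a(t\wedge\tau_c)}\widetilde G(\eta_{t\wedge\tau_c})$, domination by $e^{a\tau_c}\sup|\widetilde G|$, and dominated convergence is essentially the Dynkin/optional-stopping argument the paper uses at the end of its proof.

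However, there is a genuine gap, and you have correctly located it yourself: the boundary-derivative asymptotics \eqref{web:asmI}. Your boundary-layer rescaling $\eta=\eta_R-\varepsilon v$ and the two-term formal expansion give the right answer, but the step ``rules out the $e^v$ mode, forcing $G^{(0)}\equiv 1$'' is not a proof. What is needed is (i) uniform-in-$\eta^*$ convergence of the rescaled solutions together with their first derivatives on compact $v$-intervals, and (ii) justification that the limiting solution has no $e^{v}$ component, with an error estimate that is $o((\eta^*)^{-1})$ after undoing the rescaling. The supersolution bound $G_{a,c}\le\bar h$ controls only the exponential growth rate ($a/\mu<1$) but is off by a factor exponentially large in $\eta^*$ away from the boundary; it gives a one-sided upper bound on $-\eta_R G'_{a,c}(\eta_R)$, not the sharp constant $2a/(3n+2)$. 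Also note that when $c<0$ the supersolution $\bar h$ is not tight at the right endpoint $\eta_R=\eta^*+c$ (since $\eta^*+|c|\neq\eta_R$), so the stated asymptotic $\bar h = e^{(a/\mu)v(1+o(1))}$ there only holds for $c\ge 0$; a symmetric choice or a different supersolution would be needed for $c<0$.

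The paper closes exactly this gap by converting \eqref{G:eqI} into Weber's equation through $G_{a,c}(\eta)=e^{-\beta\eta^2/4}H(\sqrt\beta\eta)$ with $\beta=(3n+2)/\sigma^2$, expressing $G_{a,c}$ and $G'_{a,c}$ through the parabolic cylinder functions $U(\kappa\mp\tfrac12,\pm iv)$ with $\kappa=2a/(\sigma^2\beta)=a/\mu\in(0,1)$, and invoking the large-argument asymptotics of $U$ from DLMF. This delivers both the nonvanishing of the Wronskian-type determinant $AD-BC$ (hence $C^\infty$ regularity and the unique solvability of \eqref{G:eqI} for large $\eta^*$) and the explicit leading term $\mp\tfrac{2a}{3n+2}(\eta^*)^{-1}$ with a controlled $o((\eta^*)^{-1})$ error. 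You flagged exactly this as ``the technical crux,'' and indeed it is: if you wish to complete your proposal, this is the part that cannot be skipped, and writing $G_{a,c}$ in terms of parabolic cylinder functions (or, equivalently, confluent hypergeometric functions) is the cleanest way to do it.
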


\begin{remark}
  In this article we only need the case $c=0$. However in the sequel
  \cite{HerzogMattingly2013II}, we will need the full strength of
  Lemma \ref{lem:exp_webf}; that is, all results above when
  $c\neq 0$.  
  \end{remark}

\begin{remark}
Applying conclusion \ref{conclusionb}) of the result, we see that the righthand side of \eqref{expr:psi_3eta} solves \eqref{caseI:psi_3eta}.  
\end{remark}
\begin{remark}
Given that $p\in (0, n)$ and $q \in (p/n, 1)$, we see that $$p_2 <p_3< \frac{3}{2}n+1 .$$  
Hence by the lemma above, $\psi_3(r, \theta)\in C^\infty(\mathcal{S}_3)$ for all $\eta^*>0$ large enough.  Note that this choice of $\eta^*$ is consistent with Remark~\ref{rem:parameter}.        
\end{remark}

\begin{proof}[Proof of Lemma \ref{lem:exp_webf}]
See Appendix~\ref{appendixA}.  
\end{proof}

Considering the framework of the construction procedure outlined in Section~\ref{sec:lfdefbp}, upon taking another look at the expression \eqref{expr:psi_3} we see that $\psi_3$ is
the sum of three terms, each of which is homogeneous under the scaling
transformation $S^\lambda_{\frac{n+2}{2}}$.  Also, it is not hard to see
that on $\mathcal{S}_3$
\begin{align*}
\psi_3(r, \theta) \geq c r^{p_2} \E_{\eta(r, \theta)} e^{p_2\tau}
\end{align*} 
for some $c>0$.  Hence, $\psi_{3}(r, \theta) \geq 0$ on
$\mathcal{S}_3$ and $\psi_{3}(r, \theta) \rightarrow \infty$ as
$r\rightarrow \infty $ with $(r, \theta)\in \mathcal{S}_3$.

\subsection{The Relationship Between the Scaling of $\mathcal{S}_1$ and $\mathcal{S}_3$ and the Origin of the Restriction on $p$}
\label{sec:scalingRelationDifRegion}

Now that the basic construction is
finished, let us take a moment to elucidate the relationship between
the scaling exponents $p$ and $p_2$. We will show that the shape of region
$\mathcal{S}_2$ dictates the relationship between the two. This is fundamental
to understanding the problem since we saw that the equation for $\psi_3$
places a restriction of the exponent $p_2$ which in turn cascades through
the remaining dependencies to place a restriction on $p$.

The function $\psi_2$ consists of two terms: one comes from the boundary
data propagated along the flow and the other from integrating the
right-hand side along the characteristics. Denoting the part of solution $\psi_2$ which comes from the
boundary data by $\tilde{\psi}_2$, setting $h_2=0$ in \eqref{expr:psi_2} gives that $\tilde{\psi}_2(r, \theta)= c r^p |\theta|^{-\frac{p}{n}}$ for some positive constant $c$.  Hence, $\tilde{\psi}_2$ is homogeneous of degree $p (1+ \alpha/n)$ under the scaling transformation $S_\alpha^\lambda$.  Since the lower boundary
of $\mathcal{S}_2$ is homogeneous under $S_\alpha^\lambda$ with $\alpha =\frac{n+2}{2}$, we see that $\tilde{\psi}_2$ must be
homogeneous under $S_{\frac{n+2}{2}}^\lambda$ of 
of degree
\begin{align*}
p_2 = p \Big( \frac{3n+2}{2n}\Big).
\end{align*}
Since we saw that $p_2$ was required to be less than $ \frac{3n+2}{2}$, we conclude
$p$ has to be less than
\begin{align*}
\Big( \frac{2n}{3n+2}\Big)\Big( \frac{3n+2}{2}\Big)=n
\end{align*}
which was the restriction placed on $p$ when it was introduced when $\psi_1$ was defined.  In summary, the solution of the exit problem associated to $\mathcal{S}_3$ is only
well defined if $p_2 <
\frac{3n+2}{2}$
by Lemma \ref{lem:exp_webf}. 

\section{Proof of Theorem~\ref{thm:lyapfunmadness}}
\label{sec:thedetails}

We now prove that under Assumption~\ref{assumption:nlot}, the
functions $\{\psi_i : i=0,1,2,3\}$ together with their corresponding
domains of definition $\{\mathcal{S}_i : i=0,\ldots,3\}$ satisfy the
hypotheses of Proposition~\ref{prop:localimglobal} with the
appropriate choice of the parameters $\theta_1^*, \eta^*, r^*, h_1,
h_2, h_3$.  Having done so, we will have proven
Theorem~\ref{thm:lyapfunmadness}, as the bound \eqref{eqn:gpsibound}
will follow almost immediately.

The layout of this section is as follows. First, we will deduce the
large $r$ asymptotics of the functions $\psi_1$, $\psi_2$, and
$\psi_3$, allowing us to validate the bound \eqref{eqn:gpsibound}.
Second, we will show that for all $r^*$, $\eta^*$ sufficiently large
and all $\theta_1^*$ sufficiently small (chosen, of course, in the consistent way mentioned in Remark~\ref{rem:parameter}), the boundary-flux conditions
given in \eqref{eq:fluxCond} are satisfied for some choice of the
positive parameters $h_1, h_2, h_3$.  Lastly, we will verify the local Lyapunov property from
\eqref{eq:localLyap}.  The second and third items in the agenda will
check the hypotheses of Proposition \ref{prop:localimglobal}.

Beginning with the large $r$ asymptotics, the following proposition
derives them quickly from the construction of the $\psi_i$'s and the
accompanying discussions.

\begin{proposition}\label{prop:bounds}
There exist positive constants $l_i,u_i$ such that 
\begin{equation}
  \label{eq:psiBounds}
  \begin{aligned}
l_1 r^p \leq &\psi_1(r, \theta) \leq u_1 r^p \qquad\qquad &&(r,
  \theta) \in \mathcal{S}_1\\ 
  l_2 \frac{r^p}{|\theta|^{\frac{p}{n}}} \leq  &\psi_2(r, \theta) \leq
  u_2 \frac{r^p}{|\theta|^{q}} && (r, \theta) \in \mathcal{S}_2\\ 
  l_3 r^{p_2} \leq &\psi_3(r, \theta) \leq u_3 r^{p_3} && (r, \theta)
  \in \mathcal{S}_3
\end{aligned}
\end{equation}
where the we recall that the constants $p, p_2, p_3$ satisfy $p_2=p+\frac{p}{n}\frac{n+2}{2}$ and $p_3 = p+ q\frac{n+2}{2}$ where $p\in (0,n)$ and $q\in (p/n,1)$.    
\end{proposition}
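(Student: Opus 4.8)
The plan is to read all three estimates off the closed-form expressions \eqref{expr:psi_1}, \eqref{expr:psi_2}, and \eqref{expr:psi_3}, using that by construction each $\psi_i$ is homogeneous of a definite positive degree under the relevant scaling $S_\alpha^\lambda$. Throughout we keep the parameter hierarchy of Remark~\ref{rem:parameter}: $\theta_1^*$ is small (in particular $\theta_1^*<1$), then $\eta^*$ is large, then $r^*$ is large (and $r^*\ge1$). I would begin with $\psi_1$: on $\mathcal{S}_1$ the angle $|\theta|$ ranges over the compact interval $[\theta_1^*,\theta_0^*]\subset(0,\pi/n)$, so $\sin(n\theta)$ is bounded above and below by positive constants there, and since $p/n-1<0$ the integrand $\sin(n\alpha)^{p/n-1}\alpha^{-q}$ is likewise bounded on that interval; hence $0\le h_1\int_{|\theta|}^{\theta_0^*}\sin(n\alpha)^{p/n-1}\alpha^{-q}\,d\alpha\le C_0$ for some $C_0$, so the parenthesis in \eqref{expr:psi_1} lies in $[\sin(n\theta_0^*)^{p/n},\,\sin(n\theta_0^*)^{p/n}+C_0]$ while the prefactor $r^p|\sin(n\theta)|^{-p/n}$ is comparable to $r^p$. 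This gives $l_1 r^p\le\psi_1\le u_1 r^p$, valid on all of $\mathcal{S}_1$ by the degree-$p$ homogeneity of \eqref{expr:psi_1} under $S_0^\lambda$.

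For $\psi_2$ I would rewrite \eqref{expr:psi_2} as
\begin{equation*}
\psi_2(r,\theta)=\frac{r^p}{|\theta|^{p/n}}\Big(C+\tfrac{h_2}{qn-p}\,|\theta|^{\,p/n-q}\Big),\qquad C:=(\theta_1^*)^{p/n}\psi_1(1,\theta_1^*)-\tfrac{h_2}{qn-p}(\theta_1^*)^{\,p/n-q},
\end{equation*}
where $qn-p>0$ since $q>p/n$. On $\mathcal{S}_2$ one has $|\theta|\le\theta_1^*<1$, so $x\mapsto x^{p/n-q}$ is decreasing and $|\theta|^{p/n-q}\ge(\theta_1^*)^{p/n-q}$; thus the bracket is at least $C+\tfrac{h_2}{qn-p}(\theta_1^*)^{p/n-q}=(\theta_1^*)^{p/n}\psi_1(1,\theta_1^*)>0$ (positive by \eqref{expr:psi_1}), which yields $\psi_2\ge l_2 r^p|\theta|^{-p/n}$ with $l_2=(\theta_1^*)^{p/n}\psi_1(1,\theta_1^*)$, irrespective of the sign of $C$. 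For the upper bound I would bound the first term of \eqref{expr:psi_2} by $|C|\,r^p|\theta|^{-p/n}\le|C|\,r^p|\theta|^{-q}$ (again using $|\theta|<1$ and $p/n<q$) to get $\psi_2\le(|C|+\tfrac{h_2}{qn-p})r^p|\theta|^{-q}=:u_2 r^p|\theta|^{-q}$.

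For $\psi_3$, note first that $p_2<p_3<\tfrac{3n+2}{2}$ (the inequality $p_2<p_3$ being equivalent to $q>p/n$), so for $\eta^*$ large Lemma~\ref{lem:exp_webf} with $c=0$ gives $\eta\mapsto\E_\eta e^{p_i\tau}\in C^\infty([-\eta^*,\eta^*])$, hence $1\le\E_\eta e^{p_i\tau}\le M_i<\infty$ on $[-\eta^*,\eta^*]$ for $i=2,3$ (the lower bound because $\tau\ge0$); and on $\mathcal{S}_3$ we have $|\eta(r,\theta)|=r^{(n+2)/2}|\theta|\le\eta^*$, so these two-sided bounds are available throughout. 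Using $e^{p_3\tau}\ge e^{p_2\tau}$, $\E e^{p_2\tau}\ge1$, and $c_1-c_3=\tfrac{h_2}{(\eta^*)^q(qn-p)}>0$ in \eqref{expr:psi_3},
\begin{equation*}
\psi_3(r,\theta)\ \ge\ \big((c_1-c_3)r^{p_3}+c_2 r^{p_2}\big)\,\E_{\eta(r,\theta)}e^{p_2\tau}\ \ge\ \big((c_1-c_3)(r^*)^{\,p_3-p_2}+c_2\big)\,r^{p_2},
\end{equation*}
and since $p_3>p_2$ the right-hand bracket is positive once $r^*$ is large, giving $\psi_3\ge l_3 r^{p_2}$; for the upper bound, discard $-c_3 r^{p_3}\le0$ and use $\E e^{p_i\tau}\le M_i$ with $r^{p_2}\le r^{p_3}$ (valid since $r\ge1$, $p_2<p_3$) to get $\psi_3\le(c_1 M_3+|c_2|M_2)r^{p_3}=:u_3 r^{p_3}$.

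The only genuinely delicate point is the lower bound in $\mathcal{S}_3$: the constant $c_2$ has the sign of $C$, which is not controlled once the flux conditions of Section~\ref{sec:thedetails} pin down $h_1$ and $h_2$, so $c_2$ may be negative and one cannot retain the $c_2 r^{p_2}\E e^{p_2\tau}$ term on its own; instead one must exploit $p_3>p_2$ so that the $(c_1-c_3)r^{p_3}$ term dominates it for $r\ge r^*$, which is precisely why $r^*$ is chosen last and large and why the two-sided bound on $\E_\eta e^{p_i\tau}$ from Lemma~\ref{lem:exp_webf} is needed. Everything else is bookkeeping with \eqref{expr:psi_1}--\eqref{expr:psi_3}.
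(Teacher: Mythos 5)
Correct, and essentially the same approach as the paper: read the estimates off the explicit formulas \eqref{expr:psi_1}--\eqref{expr:psi_3} together with the scaling homogeneity of each $\psi_i$; the paper's proof is terser (abstract homogeneity plus compactness on the slice $\{r=r^*\}$ for $\psi_1$, ``completely analogous'' for $\psi_3$, and ``by inspection'' for $\psi_2$), while you spell out the bookkeeping. Your explicit treatment of the possible sign of $c_2$ in the $\psi_3$ lower bound is a useful detail the paper leaves unstated, and your argument is robust to either sign even though with the choices $h_2<h_1$ and $\theta_1^*$ small made in Section~\ref{sub:bfc} one in fact has $c_2>0$.
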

\begin{proof}[Proof of Proposition~\ref{prop:bounds}]
We begin with $\psi_1$. Since $\psi_1$ scales homogeneously under
$S_0^\lambda$ with degree $p$, for any $(r,\theta) \in \mathcal{S}_1$ we have that
\begin{align*}
  \psi_1(r,\theta) = \Big(\frac{r}{r^*}\Big)^p \psi_1(r^*,\theta)
\end{align*}
and hence 
\begin{align*}
\frac{m}{(r^*)^p}\,  r^p \leq   \psi_1(r,\theta) \leq \frac{M}{(r^*)^p}\,  r^p
\end{align*}
where $M=\sup\{  \psi_1(r^*,\theta) : \theta \in
[\theta_0^*,\theta_1^*]\}$ and $m=\inf\{  \psi_1(r^*,\theta) : \theta \in
[\theta_0^*,\theta_1^*]\}$. Since  $\psi_1(r^*,\theta)$ is continuous
$M \geq m > 0$.  The bounds on $\psi_3$ are handled in a completely
analogous way only using the scaling generated by $S^\lambda_{\frac{n+2}2}$
rather than $S^\lambda_0$. From \eqref{expr:psi_3}, we see that  the terms which make up $\psi_3$ do
not all scale with the same degree. Hence we obtain a upper bound of
$r^{p_3}$ and a lower bound of $r^{p_2}$.  Since the region $\mathcal{S}_2$ requires
both scalings to reach all points, we would need a slightly more complicated
construction which mixed the two scaling to obtain the bounds on
$\psi_2$ using just the abstract scaling. While this is not difficult, in light of the explicit representation
of $\psi_2$ given in \eqref{expr:psi_2}, we see that the quoted bounds
follow by inspection.
\end{proof}
With these estimates in hand, we turn to the more techincal of the two
remaining topics. 
\subsection{Boundary-flux conditions}
\label{sub:bfc}


\subsubsection*{Boundary between  $\mathcal{S}_0$ and $\mathcal{S}_1$}

Because $\psi_1(r,\theta)= r^p \psi_1(1,\theta)$ and 
\begin{align*}
  -h_1 r^p |\theta|^{-q} = \frac{\partial
    \psi_1}{\partial r}r \cos(n\theta)+ \frac{\partial
    \psi_1}{\partial \theta} \sin(n\theta)
\end{align*}
one has
\begin{align}\label{thetaDeriv0I}
  \frac{\partial \psi_1}{\partial \theta}=- r^p \Big(\frac{p
    \cos(n \theta) \psi_1(1,\theta) + h_1 |\theta|^{-q}}{ \sin(n \theta)}\Big).
\end{align}
Therefore combining  $\frac{\partial \psi_0}{\partial \theta}=0$ with
\eqref{thetaDeriv0I} produces
\begin{align*}
  \Big[ \frac{\partial \psi_0}{\partial \theta }- \frac{\partial
    \psi_1}{\partial \theta}\Big]_{\theta=\theta_0^*} = r^p
  \Big(\frac{p \cos(n \theta_0^*) \psi_1(1,\theta_0^*) + h_1
    (\theta_0^*)^{-q}}{ \sin(n \theta_0^*)}\Big).
\end{align*}
Since $\psi_j(r, \theta) = \psi_j(r, -\theta)$ on $\mathcal{S}_j$ for
$j=0,1$, we note also that
\begin{align}\label{eqn:bf01}
  \Big[ \frac{\partial \psi_1}{\partial \theta }- \frac{\partial
    \psi_0}{\partial \theta}\Big]_{\theta=-\theta_0^*} = \Big[
  \frac{\partial \psi_0}{\partial \theta }- \frac{\partial
    \psi_1}{\partial \theta}\Big]_{\theta=\theta_0^*}.
\end{align}
Since $\psi_1(1, \theta_0^*)=1$, $\sin(n\theta_0^*)>0$ and
$\cos(n\theta_0^*)<0$, picking
\begin{align}
0<h_1< p (\theta_0^*)^q |\cos(n\theta_0^*)| 
\end{align}
implies that the quantity \eqref{eqn:bf01} is negative.


\subsubsection*{Boundary between  $\mathcal{S}_1$ and $\mathcal{S}_2$}

Similar to the previous computations, observe that $\psi_2(r,
\theta)=r^p \psi_2(1, \theta)$ implies
\begin{align*}
  \frac{\partial \psi_2}{\partial \theta} = - r^p \bigg[ \frac{p
    \psi_2(1, \theta) + h_2 |\theta|^{-q}}{n \theta} \bigg].
\end{align*}
Since $\psi_1(1, \theta_1^*) = \psi_2(1, \theta_1^*)$, we then obtain 
\begin{align*}
  \bigg[ \frac{\partial \psi_1}{\partial \theta }- \frac{\partial
    \psi_2}{\partial \theta}\bigg]_{\theta=\theta_1^*} 
  &= -r^p \bigg[\frac{p
    \cos(n \theta_1^*) \psi_1(1,\theta_1^*) + h_1 (\theta_1^*)^{-q}}{
    \sin(n \theta_1^*)}- \frac{p \psi_1(1, \theta_1^*) + h_2
    (\theta_1^*)^{-q}}{n \theta_1^*}\bigg]\\ 
  &= -\frac{ r^p }{(\theta_1^*)^{q+1}} \bigg[ \bigg( \frac{p
    \cos(n\theta_1^*)}{\sin(n\theta_1^*)}- \frac{p}{n\theta_1^*}
  \bigg)\psi_1(1, \theta_1^*) (\theta_1^* )^{q+1} + \bigg(
  \frac{h_1}{\sin(n\theta_1^*)}-
  \frac{h_2}{n\theta_1^*}\bigg)(\theta_1^*)\bigg].
\end{align*}
Using the expression \eqref{expr:psi_1}, it is not hard to check that
$\psi_1(1, \theta_1^*) (\theta_1^*)^{q}$ is bounded as $\theta_1^*
\downarrow 0$.  Employing the Taylor expansions for
$\sin(n\theta_1^*)$ and $\cos(n\theta_1^*)$ about $\theta_1^*=0$, we
thus obtain
\begin{align} 
\label{eqn:bf2I}
\bigg[ \frac{\partial \psi_1}{\partial \theta }- \frac{\partial
  \psi_2}{\partial \theta}\bigg]_{\theta=\theta_1^*}\simeq -
\frac{r^p}{ |\theta_1^* |^{q+1}} \bigg( \frac{h_1}{n}-
\frac{h_2}{n}\bigg)
\end{align}
as $\theta_1^* \downarrow 0$ where $\simeq$ denotes asymptotic
equivalence.  Picking $h_2<h_1$, for all $\theta_1^* >0$ sufficiently small the quantity on the left-hand side
of \eqref{eqn:bf2I} is negative for $r\geq r^*$.  Since $\psi_j(r,
-\theta)= \psi_j(r, \theta)$ on $ \mathcal{S}_j$ for $j=1,2$, notice
that we also have the equality
\begin{align*}
  \bigg[\frac{\partial \psi_2}{\partial \theta}- \frac{\partial
    \psi_1}{\partial \theta } \bigg]_{\theta=-\theta_1^*}=\bigg[
  \frac{\partial \psi_1}{\partial \theta }- \frac{\partial
    \psi_2}{\partial \theta}\bigg]_{\theta=\theta_1^*}
\end{align*} 
Hence, the same choice of $\theta_1^*$ and $h_2>0$ results in a
negative sign for the flux across the boundary $\theta=-\theta_1^*$ as
well.


\subsubsection*{Boundary between $\mathcal{S}_2$ and $\mathcal{S}_3$}
\label{sec:whatisG}

Thus far it has been fairly straightforward to compute and analyze
fluxes across boundaries where noise plays no role.  In such cases, we
saw that we could find convenient expressions for $\partial_\theta
\psi_i$, $i=0,1,2$, simply by using the first-order PDEs these
functions satisfy.  A similar approach, however, does not work when
studying the flux across the boundaries between $\mathcal{S}_2$ and
$\mathcal{S}_3$ since the operator $A$ contains second-order partial
derivatives in $\theta$.  Therefore, to study $\partial_\theta
\psi_3$ along these interfaces, we opt to employ the somewhat explicit
expression \eqref{expr:psi_3} derived in the previous section.
Because there is no closed form expression for the functions
$G_{p_i}(\eta):= \E_\eta e^{p_i \tau}$, $i=2,3$, the analysis is slightly harder in this case.  We did see (at least in  the statement of Lemma~\ref{lem:exp_webf}), however, that analysis of $G'_{p_{i}}(\eta^*)$ is possible for large $\eta^*>0$.  One should have expected this because, by the computations
of Section \ref{sec:PiecewiseI}, the noise term in $A$ formally scales
away as $r^{\frac{n+2}{2}}|\theta| \rightarrow \infty$.  It turns out that this is all we need to see that the boundary flux terms have the right sign.

We now apply the Lemma~\ref{lem:exp_webf} to help control the flux terms across the boundaries
$r^{\frac{n+2}{2}}\theta= \pm\eta^*$.  By the symmetry
$G_{p_i}(\eta)=G_{p_i}(-\eta)$ for $\eta \in [-\eta^*, \eta^*]$, we
must only show that for $\eta^*>0$ sufficiently large, $h_3>0$ can be
chosen so that the flux across the boundary $r^{\frac{n+2}{2}}\theta =
\eta^*>0$ is negative for $r^*>0$ sufficiently large.  Observe that
\begin{align*}
  &\Big[\frac{\partial\psi_2 }{\partial \theta} -
  \frac{\partial\psi_3 }{\partial \theta}
  \Big]_{r^{\frac{n+2}{2}}\theta= \eta^*}\\ 
  &= -\Big(\frac{q h_2}{qn-p}\frac{1}{(\eta^*)^{q+1}}
  + \frac{1}{(\eta^*)^q}\frac{h_2}{qn-p}G_{p_3}'(\eta^*)+
  \frac{h_3}{p_3}G_{p_3}'(\eta^*)\Big)
  r^{p + \frac{n+2}{2}(q+1)} + o(r^{p + \frac{n+2}{2}(q+1)}) \text{ as } r\rightarrow
  \infty.
\end{align*} 
Recalling the assumption that $q>p/n$, observe that \eqref{web:asmI}
implies that for $\eta^*>0$ large enough and $h_3>0$ small enough
the righthand sisde of the above expression above is negative  for all 
sufficiently large $r^*$.

\subsection{The local Lyapunov property}

We now verify the local Lyapunov property given in
\eqref{eq:localLyap}. To do so, we will not need to change the values
of the $h_i$, $i=1,2$, set in the previous section.  We will need to, however, 
increase $r^*$, $\eta^*$ as well as decrease $\theta_1^*$, but this will consistent with all previous choices, including the choice of $h_3(\eta^*)$ made in the previous section, to assure that each boundary-flux term had the appropriate sign.

Letting $B$ denote the asymptotic operator corresponding to $L$ in
$\mathcal{S}_i$, this involves first writing
\begin{align*}
L\psi_i(r, \theta) = B\psi_i(r, \theta) + (L-B) \psi_i(r, \theta)
\end{align*} 
on $\mathcal{S}_i$.  Since $B\psi_i$ is of the desired form, all we
must do is estimate the remainder term $(L-B) \psi_i$ to see that
\begin{align*}
|(L-B) \psi_i| \ll |B\psi_i|  
\end{align*}        
as $r\rightarrow \infty$, $(r, \theta) \in \mathcal{S}_i$.  We proceed
region by region starting with:

\subsubsection*{Region $\mathcal{S}_0$}

Since $\psi_0(r, \theta)=r^p$, it is not hard to see that as
$r\rightarrow \infty$, $(r, \theta) \in \mathcal{S}_0$,
\begin{align}
\label{eqn:llpsi_1asI}
L \psi_0 (r, \theta) & =p r^{p} \cos(n\theta) + o(r^{p}).  
\end{align}
Since $\cos(n\theta) \leq -c <0$ for $(r, \theta) \in \mathcal{S}_0$
and some $c>0$, the relation \eqref{eqn:llpsi_1asI} implies that there
exist positive constants $c_0, d_0$ such that
\begin{align*}
L\psi_0(r, \theta) &\leq - c_0 r^{p} + d_0
\end{align*}   
for all $(r, \theta) \in \mathcal{S}_0$.  Undoing the time change, we
find easily that on $\mathcal{S}_0$
\begin{align}
\label{eqn:psi_0b2}
\mathcal{L}\psi_0(r, \theta) &\leq - C_0 r^{p+n} + D_0
\end{align}   
for some $C_0, D_0>0$.

\subsubsection*{Region $\mathcal{S}_1$}  
First observe that by definition of $\psi_1$   
\begin{align*}
L\psi_1 (r, \theta)&= T_1 \psi_1 (r, \theta) + (L-T_1) \psi_1(r, \theta)\\
&= - h_1 r^p |\theta|^{-q} + (L-T_1) \psi_1(r, \theta)  
\end{align*}
on $\mathcal{S}_1$.  To bound the remainder term $(L-T_1) \psi_1(r,
\theta)$, notice by \eqref{expr:psi_1} we may write $\psi_1(r, \theta)
= r^p g(\theta)$ where $g$ is a smooth and positive function in
$\theta$ for all $0<\theta_1^*\leq |\theta|\leq \theta_0^*$.  In
particular, since $0<\theta_1^*\leq |\theta|\leq \theta_0^*$ for $(r,
\theta) \in \mathcal{S}_1$, we see that as $r\rightarrow \infty$ with
$(r, \theta) \in \mathcal{S}_1$
\begin{align}
L \psi_1 (r, \theta)&=- h_1 \frac{r^p}{|\theta|^q} + o(r^p).  
\end{align}
Using the asymptotic formula above as well as positivity and
smoothness of $g$ on the domain for $\theta$ in $\mathcal{S}_1$, we
obtain the inequality
\begin{align*}
L \psi_1 (r, \theta)&\leq 
- c_1 \frac{r^p}{|\theta|^q} + d_1  
\end{align*}
on $\mathcal{S}_1$ for some constants $c_1, d_1>0$.  Undoing the time
change, we also find that
 \begin{align}\label{eqn:psi_1b1}
\mathcal{L} \psi_1 (r, \theta)&\leq 
- C_1 \frac{r^{p+n}}{|\theta|^q} + D_1  
\end{align} 
on $\mathcal{S}_1$ for some constants $C_1, D_1>0$.  

\subsubsection*{Region $\mathcal{S}_2$}

By definition of $\psi_2$, first observe that on $\mathcal{S}_2$
\begin{align*}
  L \psi_2 (r, \theta) &= T_2 \psi_2(r, \theta) + (L-T_2) \psi_2(r, \theta)= -h_2 \frac{r^p}{|\theta|^q} +(T_1-T_2)\psi_2(r, \theta) + (L-T_1)
  \psi_2(r, \theta).
\end{align*}
Using the Taylor expansions for $\sin(n\theta)$ and $\cos(n\theta)$
about $\theta=0$ notice that there exists a constant $C>0$ which is independent of $\theta_1^*>0$ such that
\begin{align*}
(T_1-T_2)\psi_2(r, \theta)&\leq C \theta^2\bigg[\Big(
  (\theta_1^*)^{\frac{p}{n}} \psi_1(1,\theta_1^*) + h_2
  \frac{(\theta_1^*)^{\frac{p}n-q}}{qn -p}
  \Big) \frac{r^p}{|\theta|^{p/n}} +  \frac{h_2}{qn-p} \frac{r^p}{|\theta|^q}\bigg]\\
  &\leq  C (\theta_1^*)^2\bigg[\Big(
  (\theta_1^*)^{\frac{p}{n}} \psi_1(1,\theta_1^*) + h_2
  \frac{(\theta_1^*)^{\frac{p}n-q}}{qn -p}
  \Big) \frac{r^p}{|\theta|^{p/n}} +  \frac{h_2}{qn-p} \frac{r^p}{|\theta|^q}\bigg] 
\end{align*}     
for all $(r, \theta) \in \mathcal{S}_2$.  In particular, since
$\psi_1(1, \theta_1^*)=O((\theta_1^*)^{-1})$ as $\theta_1^* \downarrow
0$, it follows that for all $\epsilon >0$, there exists $\theta_1^*>0$
small enough so that
\begin{align*}
(T_1-T_2)\psi_2(r, \theta)
&\leq \epsilon \frac{r^p}{|\theta|^q} 
\end{align*} 
for all $(r, \theta) \in \mathcal{S}_2$.  Therefore, in particular, we
may choose $\theta_1^*>0$ small enough so that
\begin{align*}
L \psi_2 (r, \theta) &\leq 
-\frac{h_2}{2} \frac{r^p}{|\theta|^q} + (L-T_1) \psi_2(r, \theta)
\end{align*} 
on $\mathcal{S}_2$.  To control the remaining term $(L-T_1) \psi_2(r,
\theta)$, recall that we are operating under Assumption \ref{assumption:nlot}.
Therefore, we find that there exists positive constants $C,D$
independent of $\eta^*, r^* $ such that on $\mathcal{S}_2$
\begin{align*}
  (L-T_1) \psi_2(r, \theta) &\leq\Big( \frac{C}{\eta^*} +
  \frac{D}{r^*}\Big)\frac{r^p}{|\theta|^q}.
\end{align*}  
Picking $\eta^*, r^*>0$ sufficiently large we see that there exist
constants $c_2,d_2>0$ such that
\begin{align*}
L \psi_2(r, \theta) & \leq - c_2 \frac{r^p}{|\theta|^q} + d_2
\end{align*}  
for all $(r, \theta) \in \mathcal{S}_2$.  Undoing the time change, we
obtain the bound
\begin{align}
\label{eqn:psi_2b2}
\mathcal{L} \psi_2(r, \theta) & \leq - C_2 \frac{r^{p+n}}{|\theta|^q} + D_2
\end{align} 
on $\mathcal{S}_2$ for some constants $C_2, D_2>0$.

\subsubsection*{Region $\mathcal{S}_3$}
First decompose $L\psi_3$ on $\mathcal{S}_3$ as follows
\begin{align*}
  L \psi_3(r, \theta) = A \psi_3(r, \theta) + (T-A) \psi_3(r, \theta)
  + (L-T) \psi_3(r, \theta)
\end{align*}
where 
\begin{align*}
  T= r\cos(n\theta) \partial_r + \sin(n\theta) \partial_\theta +
  \frac{\sigma^2}{2 r^{n+2}} \partial_\theta^2 .
\end{align*}
Hence  
\begin{align*}
  L \psi_3(r, \theta) &= -h_3 r^{p_3} + (T-A) \psi_3(r, \theta) + (L-T) \psi_3(r, \theta)\\&= -h_3 r^{p_3} + (T_1-T_2) \psi_3(r, \theta) + (L-T) \psi_3(r,
  \theta).
\end{align*}
Let us first see how to bound $(T_1-T_2)\psi_3$.  Again, making use of
the Taylor expansions for $\sin(n\theta)$ and $\cos(n\theta)$, we see
that there exists a constant $C>0$ which is independent of $\eta^*$
such that
\begin{align*}
  (T_1-T_2)\psi_3(r, \theta) &\leq C \theta^2 \big(|r\partial_r
  \psi_3(r, \theta)| + |\partial_\theta \psi_3(r, \theta) | \big).
\end{align*}
To control derivatives of $\psi_3$, recall the expression
\eqref{expr:psi_3}.  Applying Lemma \ref{lem:exp_webf}, we deduce the
existence of a constant $C=C(\eta^*)>0$ such that
\begin{align*}
  \theta^2 (|r\partial_r \psi_3(r, \theta)| + |\partial_\theta
  \psi_3(r, \theta)|)\leq C(\eta^*) r^{p_3-\frac{(n+2)}{2}}
\end{align*}  
on $\mathcal{S}_3$.  In particular, we have thus far obtained
\begin{align}
\label{eqn:psi3llboundi}
L \psi_3(r, \theta)\leq - h_3(\eta^*) r^{p_3} + C(\eta^*) r^{p_3-
  \frac{n+2}{2}} + (L-T) \psi_3(r, \theta)
\end{align}
for all $(r,\theta) \in \mathcal{S}_3$.  To estimate the remaining
term $(L-T) \psi_3$, proceed in a similar fashion using Assumption
\ref{assumption:nlot} to see that
\begin{align}
\label{eqn:psi3lot}
(L - T) \psi_3(r, \theta) \leq D(\eta^*) r^{p_3-1}
\end{align}  
on $\mathcal{S}_3$.  Putting \eqref{eqn:psi3llboundi} together with
\eqref{eqn:psi3lot} and picking $r^*>0$ large enough, there exist constants $c_3, d_3 >0$ such that on
$\mathcal{S}_3$
\begin{align}
\label{eqn:psi_3b1}
L \psi_3(r, \theta) \leq - c_3 r^{p_3} +d_3.
\end{align}
Undoing the time change, we also see that on $\mathcal{S}_3$
\begin{align}
\label{eqn:psi_3b2}
\mathcal{L} \psi_3(r, \theta) \leq - C_3 r^{p_3 +n} +D_3.
\end{align}
for some constants $C_3, D_3>0$.

\begin{remark}
  Using the bounds obtained in Proposition \ref{prop:bounds} and the
  inequalities \eqref{eqn:psi_0b2}, \eqref{eqn:psi_1b1},
  \eqref{eqn:psi_2b2}, and \eqref{eqn:psi_3b2} we can easily see that
  the bound for $\mathcal{L} \psi_i$, $i=0,1,2,3$, on $\mathcal{S}_i$
  required by Proposition \ref{prop:localimglobal} is satisfied.
  Because the boundary flux terms have the appropriate sign by the
  arguments of Section \ref{sub:bfc}, we have now finished proving
  Theorem \ref{thm:lyapfunmadness} under Assumption
  \ref{assumption:nlot}.
\end{remark}

\section{Conclusion}
\label{sec:conclusion}
We have given a general methodology for constructing Lyapunov
functions and applied it to study a family of equations in which the
underlying deterministic dynamics is stabilized under the addition of
noise. The method incorporates global information of the flow and
hence is well suited in the setting where stability results from
global rewiring of trajectories due to the addition of a small amount
of noise. The use of auxiliary PDEs to define our Lyapunov functions
was central to the construction as it allowed us to obtain radially
optimal results.  There are a number of points which, though technical, allow for a successful
completion of the argument. We always use homogeneous operators in our
local constructions, as this
allows us to create local Lyapunov functions  through the use of
auxiliary PDEs which are the sum of homogeneously scaling terms.
This greatly simplifies the general analysis. The homogeneous
operators are also drastically simplified from the original
generator. This makes many points of the analysis easier, often allowing
explicit representations of solutions. We also employ a
extension of It\^o theorem which allows us to avoid smoothing the
patched functions along interfaces.

Our construction of Lyapunov functions is closely related to the
construction of sub/super solutions to certain PDEs associated to the
SDEs considered. In particular, all of our results can be translated
to the existence of a normalizable solutions with polynomial decay at
infinity to the PDE $\mathcal{L}^* \rho=0$ where $\mathcal{L}$ is the
generator of the SDE \eqref{eqn:polyZ}.

In Part II \cite{HerzogMattingly2013II} of this paper, we will
consider the same class of problems but in a more general setting
where Assumption~\ref{assumption:nlot} does not hold. In the
conclusion of that paper we will give a number directions of possible future work.

\appendix
\section{}
\label{appendixA}

In this section, we will prove the remaining technical results needed in this work; that is, we will show Lemma~\ref{l:existInvM}, Theorem~\ref{LyapConverge} parts \ref{contraction}) and \ref{supcontraction}), Theorem~\ref{thm:entrance_time}, and Lemma~\ref{lem:exp_webf}.  We start by proving Lemma~\ref{l:existInvM}.   

\begin{proof}[Proof of Lemma \ref{l:existInvM}]
  To show non-explosivity of $\xi_t$, we follow the frame of the argument proving Theorem 3.5 in \cite{Has_12}.  Let $\tau_{\infty} =
  \lim_{n\uparrow \infty} \tau_n$ to be the explosion time of $\xi_t$.
  We need to show that $\P_{\xi_0} [\tau_\infty < \infty]=0$ for all
  $\xi_0 \in \R^k$.  By Definition \ref{def:Lyapunov}, there exist
  constants $m,b>0$ such that
\begin{align}
\label{eqn:stdlya}
 \E_{\xi_0}\Psi(\xi_{t\wedge \tau_n}) - \Psi(\xi_0) &\leq
\E_{\xi_0}\int_0^{t\wedge \tau_n}(- m \Phi(\xi_s) + b )\, ds
\end{align}    
for all $t\geq 0$, $n \in \N$.  Since $\Phi \geq 0$, we obtain the bound
\begin{align}
\label{eqn:psimombound}
  \E_{\xi_0}\Psi(\xi_{t\wedge \tau_n}) \leq \Psi(\xi_0) + b t
\end{align}
for all $t\geq 0$, $n \in \N$.  Because $\Psi(\xi) \rightarrow \infty$
as $|\xi|\rightarrow \infty$, for $n\in \N$ large enough the
inequality above implies
\begin{align*}
  \P_{\xi_0}[\tau_{n} \leq t] = \frac{\inf_{|x|\geq n} \Psi(x)\cdot
    \P_{\xi_0}[\tau_{n} \leq t]}{\inf_{|x|\geq n} \Psi(x)}&\leq
  \frac{\E_{\xi_0}[ \Psi(\xi_{\tau_n}) 1_{\{\tau_n \leq
      t\}}]}{\inf_{|x|\geq n} \Psi(x)} \leq \frac{\Psi(\xi_0) + b t }{\inf_{|x|\geq n} \Psi(x)}
\end{align*}
for all $t\geq 0$.  Using the fact that $\Psi(\xi) \rightarrow \infty$
as $|\xi| \rightarrow \infty$, Fatou's lemma gives
\begin{align*}
\P_{\xi_0}[\tau_{\infty} \leq t] =0 \qquad \forall t\geq 0,
\end{align*}  
showing that $\xi_t$ is non-explosive.


We now show the existence of an invariant probability measure $\pi$ following the argument for Proposition 5.1 in \cite{HM_09}.  Since $\Psi\geq 0$, the bound \eqref{eqn:stdlya} implies
\begin{align*}
 \E_{\xi_0}\int_0^{t\wedge \tau_n}  \Phi(\xi_s)  \,ds \leq  \frac{\Psi(\xi_0)}{m} + \frac{b}{m} t
\end{align*}   
for all $t\geq 0$, $n\in \N$.  Using nonnegativity of $\Phi$, the Monotone convergence theorem
and the fact that $\tau_n \uparrow \infty$ almost surely, we obtain
\begin{align}
\label{eqn:intboundgen}
\E_{\xi_0}\int_0^{t} \Phi(\xi_s) \,ds \leq \frac{\Psi(\xi_0)}{m} + \frac{b}{m} t
\end{align} 
for all $t\geq 0$.  Letting $A_R=\{\xi \in \R^k\,: \, \Phi(\xi) \leq
R \}$, we note that the bound above implies
\begin{align*}
\frac{1}{t} \int_0^t \P_{\xi_0}[\xi_s \in A_R^c] \, ds \leq
\frac{\Psi(\xi_0) + b  t}{m R  t}.   
\end{align*}
In particular, it follows that
the sequence of measures
\begin{align*}
\pi_t^{\xi_0}(\, \cdot \,)= \frac{1}{t}\int_0^t \P_{\xi_0} [\xi_s \in \, \cdot \,]\,ds, \,\, t\geq 1,
\end{align*}
is relatively compact in the weak topology.  The Krylov-Bogoliubov Theorem \cite{KB_37} (see also the proof of sufficiency on pages 65-66 of \cite{Has_12}) now implies that there exists a sequence of times $t_n \uparrow \infty$ such that $\pi_{t_n}^{\xi_0}$ converges weakly to a probability measure $\pi^{\xi_0}$ on $\R^k$.  Moreover, by construction, $\pi^{\xi_0}$ is an invariant probability measure corresponding to the Markov process $\xi_t$.

We have left to show that $\pi^{\xi_0}$ defined above satisfies
\begin{align*}
\int_{\R^k} \Phi(\xi) \, \pi^{\xi_0}(d\xi)< \infty.    
\end{align*}
By \eqref{eqn:intboundgen}, we note that the inequality
\begin{align*}
\frac{1}{t}\E_{\xi_0} \int_0^t [\Phi(\xi_s) \wedge R] \, ds \leq
\frac{\Psi(\xi_0)}{m t} + \frac{b}{m} 
\end{align*}
is valid for any $R, t>0$.  In particular, we obtain the inequality
\begin{align*}
\int_{\R^k} [\Phi(\xi) \wedge R]\, \pi^{\xi_0}(d\xi) \leq  \frac{b}{m} 
\end{align*}
for any $R>0$.  Applying the Monotone Convergence Theorem, taking $R\rightarrow \infty$ finishes the proof of the result.
\end{proof}


To prove Theorem~\ref{LyapConverge} parts \ref{contraction}) and \ref{supcontraction}) and Theorem~\ref{thm:entrance_time}, we need the following lemma.  

\begin{lemma}
\label{lem:gronwcomp}
Suppose that $\xi_t$ has a Lyapunov pair $(\Psi, \Phi)$.  Then for all $\xi_0 \in \R^k$ and all $0\leq s \leq t$ 
\begin{align}
\E_{\xi_0}\Psi(\xi_t) - \E_{\xi_0}\Psi(\xi_s) \leq \E_{\xi_0} \int_s^t g(\xi_u) \, du
\end{align}
where $\, \E_{\xi_0} \Psi(\xi_t)$ is finite for all $t\geq 0$ and $\,\E_{\xi_0} \int_s^t |g(\xi_u)| \, du$ is finite for all $s,t \geq 0$.  Moreover, $t\mapsto \E_{\xi_0}\Psi(\xi_t)$ is a right-continuous function on $[0, \infty)$ with no upward jumps.   
\end{lemma}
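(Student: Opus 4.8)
The plan is to leverage the defining identity of the Lyapunov pair in Definition~\ref{def:Lyapunov}\ref{lyac:b}) together with the non-explosivity already established in Lemma~\ref{l:existInvM}\ref{l:existInvMa}), and to pass to the limit $n\to\infty$ in the stopped identity. First I would record, for each fixed $n\in\N$ and each pair $0\le s\le t$, the two instances of the Lyapunov identity with the bounded stopping times $\upsilon = t$ and $\upsilon = s$:
\[
\E_{\xi_0}\Psi(\xi_{t\wedge\tau_n}) = \Psi(\xi_0) + \E_{\xi_0}\!\int_0^{t\wedge\tau_n} g(\xi_u)\,du + \mathrm{Flux}(\xi_0,t,n),
\]
and the analogous one with $t$ replaced by $s$. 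Subtracting, the $\Psi(\xi_0)$ cancels and we get
\[
\E_{\xi_0}\Psi(\xi_{t\wedge\tau_n}) - \E_{\xi_0}\Psi(\xi_{s\wedge\tau_n}) = \E_{\xi_0}\!\int_{s\wedge\tau_n}^{t\wedge\tau_n} g(\xi_u)\,du + \mathrm{Flux}(\xi_0,t,n)-\mathrm{Flux}(\xi_0,s,n).
\]
By the monotonicity built into the flux term, $\mathrm{Flux}(\xi_0,t,n)\le\mathrm{Flux}(\xi_0,s,n)$ for $s\le t$, so the flux difference is $\le 0$ and can be dropped to get an inequality in the right direction for every $n$.

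Next I would pass to the limit $n\to\infty$. Non-explosivity gives $\tau_n\uparrow\infty$ a.s., so $\xi_{t\wedge\tau_n}\to\xi_t$ a.s. and $\int_{s\wedge\tau_n}^{t\wedge\tau_n}g(\xi_u)\,du \to \int_s^t g(\xi_u)\,du$ a.s. For the integral term, the uniform bound \eqref{eqn:intboundgen} from the proof of Lemma~\ref{l:existInvM} — or rather the companion bound obtained by the same argument applied to $|g|$, using $|g|\le m\Phi + b$ pointwise from Definition~\ref{def:Lyapunov}\ref{lyac:c}) — shows $\E_{\xi_0}\int_0^t|g(\xi_u)|\,du \le m^{-1}\Psi(\xi_0) + (b/m + b)t < \infty$, so dominated convergence applies and the integral terms converge; this also establishes the asserted finiteness of $\E_{\xi_0}\int_s^t|g(\xi_u)|\,du$. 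For the $\Psi$ terms I would use that $\Psi\ge 0$ together with Fatou's lemma on $\E_{\xi_0}\Psi(\xi_{t\wedge\tau_n})$, and the uniform-integrability-type bound $\sup_n\E_{\xi_0}\Psi(\xi_{t\wedge\tau_n})\le\Psi(\xi_0)+bt$ from \eqref{eqn:psimombound} to control $\E_{\xi_0}\Psi(\xi_t)$; in fact \eqref{eqn:psimombound} plus Fatou directly gives $\E_{\xi_0}\Psi(\xi_t)\le\Psi(\xi_0)+bt<\infty$, which is the claimed finiteness. Combining Fatou applied to the $t$-term with the dominated convergence on the integral and the sign of the flux yields
\[
\E_{\xi_0}\Psi(\xi_t) - \E_{\xi_0}\Psi(\xi_s) \le \E_{\xi_0}\!\int_s^t g(\xi_u)\,du,
\]
which is the main inequality. (One should be slightly careful: Fatou gives $\le$ for the $t$-term but we need a matching lower bound or convergence for the $s$-term; since $\Psi(\xi_{s\wedge\tau_n})\to\Psi(\xi_s)$ a.s. and $0\le\Psi(\xi_{s\wedge\tau_n})$, applying Fatou to the $s$-term on the left of the subtracted inequality — equivalently rearranging so that $\Psi(\xi_{s\wedge\tau_n})$ sits where a $\liminf$ can be taken — and using \eqref{eqn:psimombound} for dominatedness at time $s$ closes this. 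The cleanest route is to note $\E_{\xi_0}\Psi(\xi_{t\wedge\tau_n})$ has a convergent subsequence with limit $\le\Psi(\xi_0)+bt$ and the integral converges, forcing $\E_{\xi_0}\Psi(\xi_{s\wedge\tau_n})$ to converge as well.)

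Finally, for the regularity statement: right-continuity of $t\mapsto\E_{\xi_0}\Psi(\xi_t)$ with no upward jumps. For any $t_k\downarrow t$, $\xi_{t_k}\to\xi_t$ a.s. by path continuity of the diffusion, and Fatou gives $\E_{\xi_0}\Psi(\xi_t)\le\liminf_k\E_{\xi_0}\Psi(\xi_{t_k})$; on the other hand the just-proved monotone-type inequality with the $s\le t$ replaced by $t\le t_k$ gives $\E_{\xi_0}\Psi(\xi_{t_k}) - \E_{\xi_0}\Psi(\xi_t) \le \E_{\xi_0}\int_t^{t_k} g(\xi_u)\,du$, and the right side $\to 0$ as $t_k\downarrow t$ by dominated convergence (using the integrable dominating function $m\Phi+b$ and, again, \eqref{eqn:intboundgen}). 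Hence $\limsup_k\E_{\xi_0}\Psi(\xi_{t_k})\le\E_{\xi_0}\Psi(\xi_t)$, giving right-continuity. For ``no upward jumps'' at a left limit, i.e. $\limsup_{s\uparrow t}\E_{\xi_0}\Psi(\xi_s)\le\E_{\xi_0}\Psi(\xi_t)$ — wait, that is downward; the statement ``no upward jumps'' means the left limit, if it exists, is $\ge$ the value, equivalently $\E_{\xi_0}\Psi(\xi_t)\le\liminf_{s\uparrow t}\E_{\xi_0}\Psi(\xi_s)$ is false in general; rather it means $\E_{\xi_0}\Psi(\xi_{t})\le\lim_{s\uparrow t}\E_{\xi_0}\Psi(\xi_s)$ cannot fail in the upward direction, which follows from Fatou: $\E_{\xi_0}\Psi(\xi_t)\le\liminf_{s\uparrow t}\E_{\xi_0}\Psi(\xi_s)$ by a.s. continuity, so the function can only jump down, never up. The main obstacle in the whole argument is the interchange of limit and expectation for the $\Psi$ terms — since $\Psi$ is only assumed continuous (not $C^2$) and we have no a priori $L^1$ domination beyond the one-sided bound \eqref{eqn:psimombound}, one must argue carefully with Fatou on both sides and exploit that the integral terms converge genuinely (by domination via $m\Phi+b$) to pin down the $\Psi$-term limits rather than merely bound them.
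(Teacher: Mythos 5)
Your overall architecture matches the paper: invoke Definition~\ref{def:Lyapunov}\ref{lyac:b}) with stopping times $s$ and $t$, use the monotonicity of the flux to drop it, let $n\to\infty$ via non-explosivity, and use $g\leq b$ plus Fatou for the regularity of $t\mapsto\E_{\xi_0}\Psi(\xi_t)$. However, there are two genuine gaps.

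First, the claimed pointwise bound $|g|\le m\Phi+b$ is false. Definition~\ref{def:Lyapunov}\ref{lyac:c}) only gives the one-sided inequality $g\le -m\Phi+b$, which bounds $g^+$ by $b$ but says nothing about $g^-$; the negative part of $g$ could a priori be much larger than $m\Phi+b$. The paper handles this carefully: it first uses the stopped Lyapunov identity with $g\le b$ to isolate and bound $\E_{\xi_0}\int_0^{t\wedge\tau_n}(-g)1\{g\le 0\}\,ds\le\Psi(\xi_0)+bt$ (rearranging and using $\Psi\ge 0$), then passes to the limit by monotone convergence, and only then assembles $\E_{\xi_0}\int_0^t|g|<\infty$ from the negative-part bound plus $g^+\le b$. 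Your shortcut via a nonexistent two-sided bound skips exactly the step that makes this finiteness nontrivial.

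Second, your handling of the $\Psi$-terms in the limit $n\to\infty$ does not close. You correctly flag the difficulty: Fatou gives $\liminf_n\E_{\xi_0}\Psi(\xi_{t\wedge\tau_n})\ge\E_{\xi_0}\Psi(\xi_t)$, but you need an inequality in the \emph{other} direction for the $s$-term, and there is no obvious dominating function for $\Psi(\xi_{s\wedge\tau_n})$ since $\Psi(\xi_{\tau_n})$ can be large on $\{\tau_n<s\}$. Your suggested fix, extracting convergent subsequences and ``forcing'' the $s$-term to converge, only produces limits $L_t$ and $L_s$ with $L_t\ge\E_{\xi_0}\Psi(\xi_t)$ and $L_s\ge\E_{\xi_0}\Psi(\xi_s)$ (both by Fatou); the desired inequality $\E_{\xi_0}\Psi(\xi_t)-\E_{\xi_0}\Psi(\xi_s)\le L_t-L_s$ does not follow and in fact goes the wrong way on the $s$-term. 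The paper circumvents this with the elementary but crucial pointwise observation
\[
\Psi(\xi_{t\wedge\tau_n})-\Psi(\xi_{s\wedge\tau_n})
\;\ge\; \Psi(\xi_t)1\{t\le\tau_n\}-\Psi(\xi_s)1\{s\le\tau_n\},
\]
valid because the two $\Psi(\xi_{\tau_n})$-contributions cancel in sign when $s\le t$ and $\Psi\ge 0$. The right-hand side \emph{is} dominated (by $\Psi(\xi_t)$ and $\Psi(\xi_s)$, integrable by \eqref{eqn:psimombound} and Fatou), so dominated convergence applies to it directly, yielding $\E_{\xi_0}\Psi(\xi_t)-\E_{\xi_0}\Psi(\xi_s)\le\limsup_n[\E_{\xi_0}\Psi(\xi_{t\wedge\tau_n})-\E_{\xi_0}\Psi(\xi_{s\wedge\tau_n})]$. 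This is the missing idea in your argument.
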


\begin{proof}
We first claim that $\E_{\xi_0}\int_0^t  |g(\xi_s)| \, ds< \infty $ for all $t\geq 0$.  Note that this will then easily prove that $ \E_{\xi_0}\int_s^t |g(\xi_u)|\, \,du$ is finite for all $s, t\geq 0$.  
Observe that by definition and the fact that $g\leq -m \Phi+ b\leq b$  we have
\begin{align*}
\E_{\xi_0}\Psi(\xi_{t\wedge \tau_n})& \leq  \Psi(\xi_0) + \E_{\xi_0}\int_0^{t\wedge \tau_n}  g(\xi_s) \, ds\\
&\leq   \Psi(\xi_0) + b t+  \E_{\xi_0}\int_0^{t\wedge \tau_n} g(\xi_s) 1\{ g(\xi_s) \leq 0\}\, \, ds
\end{align*}
for all $t\geq 0$, $n\in \N$.  Rearranging the above and using non-negativity of $\Psi$ produces the inequality
\begin{align}
\label{eqn:crazy}
 \E_{\xi_0} \int_0^{t\wedge \tau_n} -g(\xi_s) 1\{g(\xi_s) \leq 0 \} \, ds \leq   \Psi(\xi_0) + bt.  
\end{align}
Applying the Monotone Convergence Theorem and non-explosivity of $\xi_t$, we see that 
\begin{align*}
\lim_{n\rightarrow \infty} \E_{\xi_0} \int_0^{t\wedge \tau_n} - g(\xi_s) 1\{g(\xi_s) \leq 0 \} \, ds &=\E_{\xi_0} \int_0^t -g(\xi_s) 1\{ g(\xi_s) \leq 0\}\, ds < \infty.  
\end{align*}
Moreover, using again the fact that $g\leq b$ we have 
\begin{align*}
\E_{\xi_0} \int_0^t |g(\xi_s)| \, ds  = \E_{\xi_0}\int_0^t -g(\xi_s) 1\{g(\xi_s) \leq 0 \}\, ds + \E_{\xi_0}\int_0^t g(\xi_s) 1\{ g(\xi_s) >0\} \, ds < \infty, 
\end{align*}
establishing the claim.  To finish proving the bound, note that since $s\leq t$ and $\Psi \geq 0$ we find 
\begin{align*}
 \Psi(\xi_{t\wedge \tau_n}) -\Psi(\xi_{s\wedge \tau_n})  &= \Psi(\xi_{t}) 1\{ t \leq \tau_n\} - \Psi(\xi_s) 1\{ s\leq \tau_n\}+ \Psi(\xi_{\tau_n}) (1\{t> \tau_n \} -1\{s> \tau_n \}) \\
& \geq  \Psi(\xi_{t}) 1\{ t \leq \tau_n\} - \Psi(\xi_s) 1\{ s\leq \tau_n\}.
\end{align*}
Also note that by \eqref{eqn:psimombound}, continuity of $\Psi$, and Fatou's lemma, $\E_{\xi_0} \Psi(\xi_t) \in[ 0, \infty)$ for all $t\geq 0$.  Thus, by the Dominated Convergence Theorem,
\begin{align*}
\E_{\xi_0}\Psi(\xi_t) - \E_{\xi_0}\Psi(\xi_s) &= \lim_{n\rightarrow \infty}(\E_{\xi_0}\Psi(\xi_{t}) 1\{ t \leq \tau_n\} - \E_{\xi_0}\Psi(\xi_s) 1\{ s\leq \tau_n\})\\&\leq \limsup_{n\rightarrow \infty}( \E_{\xi_0}\Psi(\xi_{t\wedge \tau_n}) -\E_{\xi_0}\Psi(\xi_{s\wedge \tau_n}) ) \\
&= \limsup_{n\rightarrow \infty}\bigg[ \E_{\xi_0} \int_{s\wedge \tau_n}^{t\wedge \tau_n} g(\xi_u) \, du + \text{Flux}(\xi_0, t, n) - \text{Flux}(\xi_0, s, n) \bigg]\\
&\leq \limsup_{n\rightarrow \infty} \E_{\xi_0} \int_{s\wedge \tau_n}^{t\wedge \tau_n} g(\xi_u) \, du  =\E_{\xi_0} \int_s^t g(\xi_u) \, du,
\end{align*}
finishing the proof of the bound.  

We have left to show that $t\mapsto \E_{\xi_0}\Psi(\xi_t)$ is a right-continuous function on $[0, \infty)$ with no upward jumps.  To see that ${\E_{\xi_0}\Psi(\xi_t) \rightarrow \Psi(\xi_s)}$ as $t\rightarrow s^+$, note that since $g\leq -m \Phi + b \leq b$ and $\Psi\geq 0$ we have 
\begin{align*}
\liminf_{t\rightarrow s^+} \E_{\xi_0}  \Psi(\xi_t)\leq \limsup_{t\rightarrow s^+} \E_{\xi_0}  \Psi(\xi_t) \leq \limsup_{t\rightarrow s^+}\big(\E_{\xi_0}\Psi(\xi_s)  + b(t-s) \big) = \E_{\xi_0}\Psi(\xi_s).  
\end{align*} 
On the other hand, by Fatou's lemma, continuity of $\Psi$, and path continuity of $\xi_t$
\begin{align*}
\E_{\xi_0}\Psi(\xi_s) \leq \liminf_{t\rightarrow s^+} \E_{\xi_0 }\Psi(\xi_t).  
\end{align*}       
Therefore $\lim_{t\rightarrow s^+} \E_{\xi_0} \Psi(\xi_t) = \E_{\xi_0}\Psi(\xi_s)$ as claimed.  Hence $t\mapsto \E_{\xi_0}\Psi(\xi_t)$ is right-continuous on $[0, \infty)$.  Note also that the bound
\begin{align*}
\E_{\xi_0}\Psi(\xi_t) \leq \E_{\xi_0}\Psi(\xi_s) + b(t-s),
\end{align*}  
which is satisfied for all $0 \leq s \leq  t$, implies that for $t>0$
\begin{align*}
\liminf_{s\rightarrow t^{-}} \E_{\xi_0}\Psi(\xi_s) \geq \liminf_{s\rightarrow t^{-}} (\E_{\xi_0}\Psi(\xi_t) - b(t-s))= \E_{\xi_0} \Psi(\xi_t).
\end{align*}  
Hence, $t\mapsto \E_{\xi_0} \Psi(\xi_t)$ has no upward jumps.  
\end{proof}

We will also need the following ODE comparison result.  
\begin{proposition}
\label{prop:odecomp}
Fix $T\in(0, \infty)$ and let $f\in C(\R) $ be non-increasing.  Suppose that $\psi\in C([0, T])$ satisfies 
\begin{align}
\label{eqn:ppe}
\psi(t)= \psi(s) + \int_s^t f(\psi(u)) \, du
\end{align}
for all $s,t$ with $0 \leq s\leq t \leq T$.  If $\phi(t)$ is a right-continuous function on $[0, T]$ with no upward jumps satisfying $\phi(0)=\psi(0)$ and the inequality 
\begin{align}
\label{eqn:ppcomp}
\phi(t) \leq \phi(s) + \int_s^t f(\phi(u)) \, du   
\end{align}
for all $s, t$ with $0\leq s\leq t\leq T$, then $\phi(t) \leq \psi(t)$ for all $0 \leq t \leq T$.  
\end{proposition}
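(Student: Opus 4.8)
The plan is to run a contradiction argument that exploits the monotonicity of $f$ directly, with no Gronwall estimate needed. Set $D(t) := \phi(t) - \psi(t)$; since $\psi \in C([0,T])$ and $\phi$ is right-continuous with no upward jumps, the same is true of $D$, and $D(0) = 0$ by hypothesis. Suppose, for contradiction, that $D(t_1) > 0$ for some $t_1 \in (0,T]$, and define $t_0 := \sup\{ t \in [0,t_1] : D(t) \le 0 \}$, which is well defined because $0$ lies in this set.

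The first step is to verify that $D(t_0) \le 0$. If the supremum is attained this is immediate; otherwise one picks $s_n \uparrow t_0$ with $D(s_n)\le 0$, so that $\liminf_{s\to t_0^-} D(s) \le 0$, and then invokes the ``no upward jump'' property of $\phi$ together with the continuity of $\psi$ to conclude $D(t_0) \le \liminf_{s\to t_0^-} D(s) \le 0$. The second step is to compare on the interval $[t_0,t_1]$: subtracting \eqref{eqn:ppe} from \eqref{eqn:ppcomp}, both applied with lower endpoint $t_0$, gives
\[
D(t) \;\le\; D(t_0) + \int_{t_0}^{t} \big[\, f(\phi(u)) - f(\psi(u)) \,\big]\, du , \qquad t_0 \le t \le t_1 .
\]
By the definition of $t_0$ as a supremum, every $u \in (t_0, t_1]$ satisfies $D(u) > 0$, i.e. $\phi(u) > \psi(u)$; since $f$ is non-increasing, $f(\phi(u)) - f(\psi(u)) \le 0$ on that range, so the integral is $\le 0$ and hence $D(t) \le D(t_0) \le 0$ for all $t \in [t_0,t_1]$. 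Taking $t = t_1$ contradicts $D(t_1) > 0$, so in fact $D \le 0$, i.e. $\phi \le \psi$, on all of $[0,T]$.

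I expect the only real subtlety to be the handling of the possible discontinuity of $D$ at the point $t_0$: this is precisely where the hypothesis that $\phi$ has no upward jumps is used, and it is what lets the elementary ``supremum of the set $\{D\le0\}$'' reasoning go through even though $\phi$ need not be continuous. The remaining points — that $u\mapsto f(\phi(u))$ is integrable on subintervals, so that \eqref{eqn:ppcomp} is meaningful, and that $\psi$, being a solution of the integral equation \eqref{eqn:ppe}, is in particular continuously differentiable — are routine and are effectively built into the statement. Note that continuity of $f$ plays essentially no role in this argument; only its monotonicity matters. When this proposition is then combined with Lemma~\ref{lem:gronwcomp} (which supplies exactly such a right-continuous $\phi$ with no upward jumps, namely $t\mapsto\E_{\xi_0}\Psi(\xi_t)$), one obtains the quantitative decay estimates used in the proofs of Theorem~\ref{LyapConverge} and Theorem~\ref{thm:entrance_time}.
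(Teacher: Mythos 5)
Your argument is correct and follows essentially the same route as the paper's own proof: a contradiction via the supremum $t_0$ of $\{t : D(t) \le 0\}$, using the no-upward-jumps property to control $D(t_0)$, and then monotonicity of $f$ to force the integral of $f(\phi) - f(\psi)$ on $(t_0, t_1]$ to be nonpositive. The only cosmetic difference is that the paper notes $D(t_0) = 0$ exactly (right-continuity rules out $D(t_0) < 0$), whereas you get by with the weaker $D(t_0) \le 0$, which is slightly more economical and works just as well.
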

\begin{proof}
Suppose that there exists $T_0 \in (0, T]$ such that $\phi(T_0)- \psi(T_0) >0$.  Define $$S_0 = \sup\{ t\in [0, T_0]\, : \phi(t) -\psi(t)\leq 0\}$$ and observe that $\phi-\psi$ is also a right-continuous function with no upward jumps on $[0, T]$ as $\psi$ is a continuous function.  Hence it follows that $S_0\in [0, T_0)$, $\phi(S_0)- \psi(S_0)=0$ and $\phi(t)- \psi(t) >0$ for $t\in (S_0, T_0)$.  Now use that relations \eqref{eqn:ppe} and \eqref{eqn:ppcomp} and the fact that $f$ is non-increasing to see that for $t\in (S_0,T_0)$
\begin{align*}
0 < \phi(t) - \psi(t) \leq \int_{S_0}^t f(\phi(s)) - f(\psi(s)) \, ds \leq 0, 
\end{align*}   
showing that no such $T_0$ can exist.  Hence $\phi(t) \leq \psi(t)$ for all $t\in [0, T]$.      
\end{proof}

We are now able to prove Theorem~\ref{LyapConverge} parts \ref{contraction}) and \ref{supcontraction}) and Theorem~\ref{thm:entrance_time}.

\begin{proof}[Proof of Theorem~\ref{LyapConverge} \ref{contraction}), \ref{supcontraction})]
Letting $\mathscr{P}_t(\xi_0, \, \cdot \,)= \P_{\xi_0}[\xi_t \in \, \cdot \, ]$, we start by proving part \ref{contraction}), aiming to apply Theorem 1.3 of \cite{HM_11}.  To connect with their notation, fix $T_0>0$ and let $\mathcal{P}(\xi_0, \, \cdot\,)$ be the one step transition probability of the Markov chain $\xi_{nT_0}$, $n=0, 1, 2, \ldots$, and $\mathcal{P}^n$ the associated semi-group.  We first check that Assumption 1 of \cite{HM_11} is satisfied under our hypotheses.  By Lemma \ref{lem:gronwcomp}, we have for $0\leq s\leq t$
\begin{align*}
\E_{\xi_0} \Psi(\xi_{t}) \leq \E_{\xi_0}\Psi(\xi_s) + \E_{\xi_0}\int_s^{t} g(\xi_u) \, du &\leq \E_{\xi_0}\Psi(\xi_s) + \E_{\xi_0}\int_s^{t} (-m \Psi(\xi_u) + b) \, du\\
& = \E_{\xi_0} \Psi(\xi_s) + \int_{s}^t (-m \E_{\xi_0}\Psi(\xi_u) + b)  \, du 
\end{align*}   
where the final equality follows from Tonelli's theorem and the fact that each quantity above is finite.  Since $t\mapsto \E_{\xi_0} \Psi(\xi_t)$ is right-continuous on $[0, \infty)$ with no upward jumps, by Proposition \ref{prop:odecomp} we obtain the bound
\begin{align}
\label{eqn:semigroupbound}
\mathscr{P}_t \Psi(\xi_0) = \E_{\xi_0}\Psi(\xi_{t}) \leq e^{-m t} \Psi(\xi_0) + \frac{b}{m}   
\end{align}
for all $t\geq 0$.  In particular, 
\begin{align*}
\mathcal{P}\Psi(\xi_0)\leq e^{-m T_0} \Psi(\xi_0) + \frac{b}{m}.
\end{align*}
Since $e^{-mT_0} \in (0,1)$, this now validates Assumption 1 of \cite{HM_11}.  To check that Assumption 2 of \cite{HM_11} is satisfied, recall that since $\xi_t$ is an It\^{o} diffusion with smooth coefficients and that $\xi_t$ has a uniformly elliptic diffusion matrix, $\xi_t$ has a transition probability density function $p_t(\xi_0, \xi)$ with respect to Lebesgue measure $d\xi$ on $\R^k$ which is $C^\infty$ and strictly positive for $(t, \xi_0, \xi) \in (0, \infty) \times \R^k \times \R^k$.  In particular using the notation in Assumption 2 of \cite{HM_11}, if $R>2b/[a(1-e^{-mT_0})]$, $\mathcal{C}= \{\xi \, : \, \Psi(\xi) \leq R \}$ ($\mathcal{C}$ is compact as $\Psi$ is continuous) and $B= \{\xi  \, : \, |\xi| \leq 1 \}$, then for any Borel set $A\subset \R^k$
\begin{align*}
\inf_{\xi_0 \in \mathcal{C}} \mathcal{P}(\xi_0, A) = \inf_{\xi_0 \in \mathcal{C}} \int_{A} p_{T_0}(\xi_0, \xi) \, d\xi  &\geq   \inf_{\xi_0 \in \mathcal{C}} \int_{A\cap B} p_{T_0}(\xi_0, \xi) \, d\xi \geq \gamma \lambda(B) \frac{\lambda(A\cap B)}{\lambda(B)}
\end{align*}  
 where $\gamma= \min_{\xi_0 \in \mathcal{C},\, \xi \in B} p_{T_0}(\xi_0, \xi)  >0$ and $\lambda$ denotes Lebesgue measure.  That is, Assumption 2 of \cite{HM_11} is also satisfied.  Applying Theorem 1.3 of \cite{HM_11}, letting $w_\beta(\xi) = 1+ \beta \Psi(\xi)$, there exists $\alpha \in (0,1)$ and $\hat{\beta} >0$ such that for any two probability measures $\nu_1, \nu_2 \in \mathcal{M}_{w_\beta}(\R^k)$
 \begin{align*}
d_{w_{\hat{\beta}}}(\nu_1 \mathcal{P} ,\nu_2 \mathcal{P}) \leq \alpha d_{w_{\hat{\beta}}}( \nu_1, \nu_2).  
 \end{align*}
In particular, iterating this bound produces
\begin{align*}
d_{w_{\hat{\beta}}}(\nu_1 \mathcal{P}^n , \nu_2\mathcal{P}^n ) \leq \alpha^n d_{w_{\hat{\beta}}}(\nu_1, \nu_2)
\end{align*}
for all $n\geq 0$.  Now to get the bound for any $\beta>0$ (not just for some $\hat{\beta}>0$), first note that for all $\beta, \, \beta'>0$ we have a constant $C_{\beta, \beta'}>0$ such that 
\begin{align*}
d_{w_\beta}( \nu_1, \nu_2) \leq C_{\beta, \beta'} d_{w_{\beta'}}(\nu_1, \nu_2).
\end{align*}
Hence for any $\beta>0$, there exists a constant $C_\beta$ such that 
\begin{align*}
d_{w_\beta}(\nu_1 \mathcal{P}^n , \nu_2 \mathcal{P}^n ) \leq C_\beta \alpha^n d_{w_\beta}(\nu_1, \nu_2)
\end{align*}
for all $n\geq 0$.  To finish part \ref{contraction}), we have left to translate the above bound to the continuous time.  To do this, we first claim that $\nu \mathscr{P}_t \in \mathcal{M}_{w_\beta}(\R^k)$ for all $t\geq 0$ whenever $\nu \in \mathcal{M}_{w_\beta}(\R^k)$.  Indeed, this follows from the bound \eqref{eqn:semigroupbound} and Tonelli's Theorem as
\begin{align*}
\int_{\R^k} w_\beta(\xi) \nu \mathscr{P}_t(d\xi) = \int_{\R^k} \nu(d\xi) (\mathscr{P}_t w_\beta)(\xi) \leq \int_{\R^k} \Big(\beta \Psi(\xi) + 1+\beta \frac{b}{m}\Big)\nu(d\xi)< \infty.    
\end{align*} 
Therefore if $t= nT_0+ \delta$ for some integer $n\geq 0$ and $\delta \in [0, T_0)$ we have by the semigroup property, the claim and the Fubini-Tonelli Theorem
\begin{align*}
d_{w_\beta}(\nu_1 \mathscr{P}_t, \nu_2 \mathscr{P}_t) = d_{w_\beta}( (\nu_1 \mathscr{P}_\delta)\mathcal{P}^{n}, (\nu_2 \mathscr{P}_\delta)\mathcal{P}^{n})&\leq C_\beta \alpha^n d_{w_\beta}(\nu_1 \mathscr{P}_\delta, \nu_2 \mathscr{P}_\delta )\\
&=  C_\beta \alpha^n  \sup_{|\varphi|\leq w_\beta} \bigg[ \int_{\R^k} \mathscr{P}_\delta \varphi(\xi) (\nu_1(d\xi) -  \nu_2(d\xi))\bigg] \\ 
&\leq C_\beta' \alpha^n d_{w_\beta}( \nu_1, \nu_2)\\
& \leq C_\beta'' (\alpha^{1/T_0})^t d_{w_\beta}( \nu_1, \nu_2)
\end{align*} 
where in the penultimate inequality we have used the bound \eqref{eqn:semigroupbound}.  This finishes the proof of part \ref{contraction}) of the result.  

To prove part \ref{supcontraction}), first observe that if $(\Psi, \Psi^{1+\delta})$, $\delta >0$,  is a Lyapunov pair corresponding to $\xi_t$, then so is $(\Psi, \Psi)$ since $\Psi(\xi) \rightarrow \infty$ as $|\xi| \rightarrow \infty$.  In particular, the conclusion in part \ref{contraction}) also holds; that is, for $\beta >0$ fixed and $w=1+ \beta \Psi$ (here we choose not to emphasize the dependence on $\beta >0$), there exist positive constants $C, \eta$ such that 
\begin{align*}
d_w(\nu_1 \mathscr{P}_t, \nu_2 \mathscr{P}_t ) \leq C e^{-\eta t} d_w(\nu_1, \nu_2)
\end{align*}
for all $t\geq 0$ and all $\nu_1, \nu_2 \in \mathcal{M}_w(\R^k)$.  To improve this bound in the sense of the statement in part \ref{supcontraction}), we follow the reasoning given in Section 6 of \cite{AKM_12}.    That is, we will first show that if $(\Psi, \Psi^{1+\delta})$ is a Lyapunov pair corresponding to $\xi_t$ and $t_0 >0$, then there exists a constant $K_{t_0}>0$ such that 
\begin{align*}
(\mathscr{P}_t \Psi)(\xi_0) \leq K_{t_0}
\end{align*}
for all $t\geq t_0$ and all $\xi_0 \in \R^k$.  Applying Lemma \ref{lem:gronwcomp} and using the fact that $g\leq -m \Psi^{1+\delta} + b$ for some constants $m,b>0$, we see that    
\begin{align*}
\E_{\xi_0} \Psi(\xi_t) - \E_{\xi_0}\Psi(\xi_s) \leq \E_{\xi_0} \int_s^t g(\xi_u ) \, du &\leq \E_{\xi_0} \int_{s}^t (-m \Psi(\xi_u)^{1+\delta} + b)\\
& \leq \int_s^t (- m [\E_{\xi_0}\Psi(\xi_u)]^{1+\delta} + b) \, du
\end{align*}
where the last inequality follows Tonelli's Theorem and Jensen's inequality.  Now let $$h_{\xi_0}(t):= \E_{\xi_0}\Psi(\xi_t) \,\,\, \text{ and } \,\,\, T= \inf\{ t>0 \,: \, h_{\xi_0}(t) \leq (2b m^{-1})^{\frac{1}{1+\delta}}\}.$$  Observe that for all $0\leq s \leq t \leq T$ we have shown that
\begin{align*}
h_{\xi_0}(t) \leq h_{\xi_0}(s) + \int_s^t (-m h_{\xi_0}(u)^{1+\delta} + b)\, du \leq h_{\xi_0}(s) - \int_s^t \frac{m}{2} h_{\xi_0}(u)^{1+\delta} \, du.  
\end{align*} 
Hence, in particular, this bound implies that for all times $t\geq T$ 
\begin{align*}
h_{\xi_0}(t) \leq (2b m^{-1})^{\frac{1}{1+\delta}} 
\end{align*}
as the map $t\mapsto h_{\xi_0}(t)$ is strictly decreasing whenever $h_{\xi}(t) \geq  (2b m^{-1})^{\frac{1}{1+\delta}}$.  Moreover, by Proposition \ref{prop:odecomp}, for all $t\in [0, T]$
\begin{align*}
h_{\xi_0}(t) = \E_{\xi_0}\Psi(\xi_t) \leq \frac{1}{(\frac{m \delta t}{2} + \Psi(\xi_0)^{-\delta})^{1/\delta}}.   
\end{align*}
Therefore, for all times $t\geq t_0 >0$ we have that 
\begin{align*}
\mathscr{P}_t \Psi(\xi_0)= \E_{\xi_0}\Psi(\xi_t) \leq K_{t_0}:=\max \bigg\{  (2b m^{-1})^{\frac{1}{1+\delta}}, \frac{1}{(\frac{m\delta t_0}{2})^{1/\delta}}\bigg\},
\end{align*}
as claimed.

To finish proving the result, note by Tonelli's Theorem: If $\nu$ is a probability measure on $\R^k$ and $t>0$, then 
\begin{align*}
\int_{\R^k} w(\xi) (\nu \mathscr{P}_t)(d\xi) = \int_{\R^k} (\mathscr{P}_t w)(\xi) \nu(d\xi)\leq K_t \int_{\R^k} \nu(d\xi) < \infty; 
\end{align*}
that is, $\nu \mathscr{P}_t \in \mathcal{M}_w(\R^k)$ for $t>0$ and any probability measure $\nu$ on $\R^k$.  Also, after applying the proof of Proposition 6.2 of \cite{AKM_12}, we see that for any probability measures $\nu_1, \nu_2$ on $\R^k$ and any $t\geq 1$
\begin{align*}
d_w( \nu_1 \mathscr{P}_t, \nu_2 \mathscr{P}_t) \leq (1+\beta K_1) \| \nu_1 - \nu_2\|_{TV}.
\end{align*}       
Hence, combining this bound with the bound obtained in part \ref{contraction}) we see that for any $t\geq 1$ and any two probability measures $\nu_1, \nu_2 $ on $\R^k$
\begin{align*}
d_w(\nu_1 \mathscr{P}_t, \nu_2 \mathscr{P}_t) = d_w( \nu_1\mathscr{P}_1 \mathscr{P}_{t-1} , \nu_2 \mathscr{P}_1\mathscr{P}_{t-1} )&\leq C_\beta e^{-\eta (t-1)} d_w( \nu_1 \mathscr{P}_1, \nu_2 \mathscr{P}_1)\\
& \leq C_\beta' e^{-\eta t} \| \nu_1 - \nu_2 \|_{TV}
\end{align*}
as $\nu_i \mathscr{P}_1 \in \mathcal{M}_w(\R^k)$.  Note that this now finishes the proof of part \ref{supcontraction}) and the result.   
\end{proof}

\begin{proof}[Proof of Theorem \ref{thm:entrance_time}]
We first show that for all $\gamma >0$ large enough 
\begin{align*}
\P_{\xi_0}[\upsilon_\gamma < \infty]=1
\end{align*}
for all $\xi_0 \in \R^k$.  Let $(\Psi, \Psi^{1+\delta})$ be a Lyapunov pair corresponding to $\xi_t$.  Since $\Psi(\xi) \rightarrow \infty$ as $|\xi | \rightarrow \infty$, we may pick $\gamma >0$ large enough so that for all $|\xi | \geq \gamma$ 
\begin{align*}
- m \Psi^{1+\delta}(\xi) + b \leq - 1, 
\end{align*}
where $m,b>0$ are the constants given in the definition of Lyapunov pair.  In particular, we have that for $\gamma >0$ large enough
\begin{align*}
\inf_{|\xi| \geq \gamma}\Psi(\xi)  \P_{\xi_0}[\upsilon_\gamma \wedge \tau_n \geq t] &\leq  \E_{\xi_0} \Psi(\xi_{t\wedge \upsilon_\gamma \wedge \tau_n})\\
&\leq\Psi(\xi_0)+ \E_{\xi_0} \int_0^{t\wedge \upsilon_\gamma \wedge \tau_n} (-m \Psi(\xi_s)^{1+\delta} +b) \, ds \\
& \leq \Psi(\xi_0)+ \E_{\xi_0} \int_0^{t\wedge \upsilon_\gamma \wedge \tau_n} -1 \, ds \\
& \leq \Psi(\xi_0)- t\P_{\xi_0} [\upsilon_\gamma \wedge \tau_n \geq t]   .
\end{align*}
Rearranging the previous inequality produces the following bound
\begin{align*}
\P_{\xi_0 }[\upsilon_\gamma \wedge \tau_n \geq t ]\leq \frac{\Psi(\xi_0)}{\inf_{|\xi| \geq \gamma} \Psi(\xi) + t}
\end{align*}
which holds for all $t\geq  0$ and $\gamma >0$ large enough.  First take $n\rightarrow \infty$, applying non-explosivity of $\xi_t$, and then take $t\rightarrow \infty$ to see that for $\gamma >0$ large enough
\begin{align*}
\P_{\xi_0}[\upsilon_\gamma = \infty] = 0,
\end{align*}
finishing the proof of the first part of the result. 

To prove the second part of the result, we first note by the proof of Theorem \ref{LyapConverge} part \ref{supcontraction}), $(t, \xi_0) \mapsto \E_{\xi_0}\Psi(\xi_t)$ is bounded on $[t_0, \infty) \times \R^k$ for any $t_0>0$.  Hence, there exists a constant $K>0$ such that for all $t\geq t_0>0$ and $\xi_0 \in \R^k$: 
\begin{align*}
K \geq \E_{\xi_0} \Psi(\xi_t) &\geq \E_{\xi_0} 1_{\{\upsilon_\gamma\geq t\}} \Psi(\xi_t)\\
&\geq \inf_{|\xi|\geq \gamma} \Psi(\xi) \cdot \P_{\xi_0} [\upsilon_\gamma \geq t].  
\end{align*}
Thus for $\gamma >0$ large enough and $t\geq t_0$ 
\begin{align*}
\P_{\xi_0} [\upsilon_\gamma \geq t ] \leq \frac{K}{\inf_{|\xi|\geq \gamma} \Psi(\xi)}.
\end{align*}
Since $\Psi(\xi) \rightarrow \infty$ as $|\xi|\rightarrow \infty$, it follows that for each $\epsilon,t  >0$ there exists a $\gamma >0$ such that 
\begin{align*}
\sup_{\xi_0\in \mathbb{R}^d} \P_{\xi_0} [\upsilon_\gamma\geq t]\leq \epsilon  
\end{align*}
finishing the proof of the result.    
\end{proof}

\begin{proof}[Proof of Lemma \ref{lem:exp_webf}]
Fixing $c\in \R$ and $a\in \big(0, \frac{3}{2}n+1\big)$, we first study the solution of the boundary-value problem
\begin{align}\label{G:eqI}
&\frac{\sigma^2}{2}G''_{a,c}(\eta) + \Big(\frac{3}{2}n +1 \Big)\eta
G'_{a,c}(\eta) + a G_{a,c}(\eta) = 0\\ 
\nonumber &G_{a,c}(-\eta^*+c)=G_{a,c}(\eta^* +c) = 1 
\end{align} 
and show all conclusions of Lemma \ref{lem:exp_webf} without assuming that we may write $G_{a,c}(\eta)= \E_{\eta} e^{a \tau_c}.$  In particular, we leave the proof that $G_{a,c}(\eta)= \E_{\eta} e^{a \tau_c}$ until the end of the argument.  To further understand solutions of \eqref{G:eqI}, we 
transform the equation to Weber's equation.  To this end, we write  
\begin{align}
G_{a,c}(\eta) = e^{-\beta\eta^2/4} H(\sqrt{\beta} \eta)
\end{align}
where $\beta =(3n+2)/\sigma^2 $ and note that $H$ satisfies
\begin{align}\label{w:eqnI}
H''(v) -\Big(\frac{v^2}{4} + \frac{1}{2} - \frac{2a}{\sigma^2 \beta} \Big) H(v) =0.
\end{align}
The two linearly independent general solutions of \eqref{w:eqnI}, denoted by 
\begin{align*}
U\Big(\frac{2a}{\sigma^2 \beta}-1/2, \pm i v\Big), 
\end{align*}
have the following integral representations (cf. Chapter 12.5 of \cite{NIST:DLMF})  
\begin{align*} 
  U\Big(\kappa-1/2, \pm i v\Big) =
  \frac{e^{\frac{v^2}{4}}}{\Gamma\big( \kappa\big)}
  \int_{0}^{\infty} t^{-1 + \kappa} e^{-t^2/2 \mp i
    v t} \, dt , \, \, \, v \in \mathbb{R}.
\end{align*}
where we have introduced $\kappa=\frac{2a}{\sigma^2 \beta}$ in the
interest of brevity.
Using these expressions, the boundary conditions given in
\eqref{G:eqI} and the assumption $0<a<\frac{3}{2}n+1$ we may formally
write  
\begin{align}
\label{eqn:exprGac}
  G_{a,c}(\eta)   = \frac{D-B}{AD-BC} \int_0^\infty f(t) \cos(\sqrt{\beta}\eta t) \, dt + \frac{A-C}{AD-BC} \int_0^\infty f(t) \sin(\sqrt{\beta}\eta t) \, dt  
\end{align} 
where $f(t) = t^{-1+ \kappa} e^{-\frac{t^2}{2}}$ and 
\begin{xalignat*}{2}
A &= \int_0^\infty f(t) \cos(\sqrt{\beta} (\eta^*+c)t)\, dt, & 
B &=  \int_0^\infty f(t) \sin(\sqrt{\beta} (\eta^*+c)t)\, dt \\
C&= \int_0^\infty f(t) \cos(\sqrt{\beta} (\eta^*-c)t)\, dt, & 
D&= -\int_0^\infty f(t) \sin(\sqrt{\beta} (\eta^*-c)t)\, dt.     
  \end{xalignat*}
The only question pertaining to the validity of \eqref{eqn:exprGac} is
that $AD-BC$ could possibly be zero.  We will now show that this is
not the case for all $\eta^*>|c|$ sufficiently large.  It will then follow
easily that $G_{a,c} \in C^\infty([-\eta^* + c, \eta^*+c])$ for all
$\eta^*>|c|$ large enough by \eqref{eqn:exprGac}.  Write
\begin{align*}
  A &= \Gamma(\kappa) e^{-\beta \frac{(\eta^*+c)^2}{4}} \text{Re}\, U\big(\kappa- \frac{1}{2}, i \sqrt{\beta}(\eta^*+c)\big) \\
  B &= - \Gamma(\kappa) e^{-\beta
    \frac{(\eta^*+c)^2}{4}} \text{Im}\, U\big(\kappa- \frac{1}{2}, i \sqrt{\beta}(\eta^*+c)\big)  \\ 
  C&=  \Gamma(\kappa) e^{-\beta
    \frac{(\eta^* -c)^2}{4}} \text{Re}\, U\big(\kappa- \frac{1}{2}, i \sqrt{\beta}(\eta^*-c)\big) \\ 
  D&= \Gamma(\kappa) e^{-\beta
    \frac{(\eta^*-c)^2}{4}} \text{Im}\, U\big(\kappa - \frac{1}{2}, i \sqrt{\beta}(\eta^*-c)\big) .
\end{align*}
One can then use the asymptotic formula for $U(a, z)$ as $z\rightarrow
\infty$ in Section 12.9 of \citep{Olver:2010:NHMF} to deduce that as
$\eta^* \rightarrow \infty$ one has
\begin{xalignat*}{2}
A &= \Gamma( \kappa)\frac{\cos(\frac{\pi}{2}  \kappa)}{(\sqrt{\beta}\eta^*)^{\kappa}}
\Big\{1+ O\big(\tfrac1{\eta^*}\big) \Big\},   &
B&=  \Gamma( \kappa)\frac{\sin(\frac{\pi}{2} \kappa)}{(\sqrt{\beta}\eta^*)^{\kappa}}
\Big\{1+ O\big(\tfrac1{\eta^*}\big) \Big\},   \\
C&= \Gamma(\kappa)\frac{\cos(\frac{\pi}{2}\kappa)}{(\sqrt{\beta}\eta^*)^{\kappa}}
\Big\{1+ O\big(\tfrac1{\eta^*}\big) \Big\},   & 
D &= - \Gamma(\kappa)\frac{\sin(\frac{\pi}{2}  \kappa)}{(\sqrt{\beta}\eta^*)^{\kappa}}
\Big\{1+ O\big(\tfrac1{\eta^*}\big) \Big\}.   
\end{xalignat*}
From these formulas and the fact that $0<\kappa<1$, we can easily conclude that for $\eta^*$ large enough
$AD-BC\neq 0$.  To see the claimed asymptotic formula for $G_{a,c}'$,
we may differentiate under the integrals in \eqref{eqn:exprGac} to
obtain
\begin{multline*}
  G_{a,c}'(\pm \eta^* +c)  = -\sqrt{\beta}\frac{D-B}{AD-BC}
  \int_0^\infty t f(t) \sin(\sqrt{\beta}(\pm\eta^*+c) t) \, dt \\ 
+ \sqrt{\beta} \frac{A-C}{AD-BC} \int_0^\infty t f(t)
  \cos(\sqrt{\beta}(\pm\eta^*+c) t) \, dt.
\end{multline*}
Using a similar trick, we may write the functions 
\begin{align*}
\int_0^\infty t f(t) \sin(\sqrt{\beta}(\pm\eta^*+c) t) \, dt\,\,
\text{ and }\,\,\int_0^\infty  t f(t) \cos(\sqrt{\beta}(\pm\eta^*+c)
t) \, dt  
\end{align*}
in terms of the function $U$ as
\begin{align*}
  \int_0^\infty t f(t) \sin(\sqrt{\beta}(\pm\eta^*+c) t) \, dt 
  = -\kappa\Gamma(\kappa) e^{- \frac{\beta}{4}(\pm \eta^* +c)^2} \text{Im} \,
  U\big(\kappa+ \frac{1}{2}, i\sqrt{\beta}(\pm
  \eta^* +c) \big),
  \end{align*}
and
  \begin{align*}
    \int_0^\infty t f(t) \cos(\sqrt{\beta}(\pm\eta^*+c) t) \, dt 
  =\kappa \Gamma(\kappa) e^{- \frac{\beta}{4}(\pm \eta^* +c)^2} \text{Re} \,
  U\big(\kappa+ \frac{1}{2}, i\sqrt{\beta}(\pm
  \eta^* +c) \big).
  \end{align*}
Again, applying the asymptotic formula for $U(a, z)$ as $z\rightarrow
\infty$ in Section 12.9 of \cite{NIST:DLMF} with those derived for $A,
B, C,D$, we can obtain the claimed asymptotic formulas for $G'_{a,c}$.  To see the symmetry $G_{a,0}(-\eta)=G_{a,0}(\eta)$ for $\eta \in
[-\eta^* +c, \eta^* +c]$, set $c=0$ in \eqref{eqn:exprGac} to see that  
\begin{align*}
G_{a,0}=\frac{\int_{0}^{\infty} f(t) \cos( \sqrt{\beta} \eta t)
    \, dt}
  {\int_{0}^{\infty} f(t) \cos( \sqrt{\beta} \eta^* t) \, dt}.
\end{align*}

Finally, to see that $G_{a,c}(\eta) = \E_{\eta} e^{a\tau_c}$ we first show that $\E_{\eta} e^{a \tau_c} < \infty$.  Indeed, since $\eta_t$ with $\eta_0=\eta$ is normally
  distributed with mean $e^{(\frac{3}{2}n+1)t}\eta $ and variance
$$ \frac{\sigma^2}{3n+2}(e^{(3n+2)t}-1),$$ we obtain
\begin{align*}
  \E_{\eta} e^{a \tau_c } = \int_{0}^{\infty} \P_{\eta}
  \{e^{a\tau_c}>t\}\, dt
  &\leq  2 + \int_{2}^{\infty} \P_{\eta} \{\tau_c>a^{-1}\log(t) \}\, dt\\
  &\leq  2+ \int_{2}^{\infty} \P_{\eta} \{\eta_{a^{-1} \log(t)}\in
  [-\eta^* +c ,\eta^* +c] \}\, dt\\ 
  &\leq 2 + K \int_{2}^\infty \frac{1}{\sqrt{t^{\frac{3n+2}{a}}-1}}\,
  dt
\end{align*}
for some constant $K>0$.  Notice that the last integral above is
finite since ${a\in \big(0, \frac{3}{2} n+1\big)}$.  To finish, since $G_{a,c}$ is bounded on $[-\eta^* + c, \eta^*+c]$, we may apply Dynkin's formula to obtain
\begin{align*}
\E_\eta e^{a (\tau_c \wedge t) }G_{a,c}(\eta_{\tau_c \wedge t}) = G_{a,c}(\eta)
\end{align*}
for all finite times $t\geq 0$.  Since $\E_\eta e^{a\tau_c} < \infty$ and $G_{a,c}$ is bounded on $ [-\eta^* + c, \eta^*+c]$, we may apply dominated convergence and take $t\rightarrow \infty$ to see that $G_{a,c}(\eta)= \E_{\eta} e^{a \tau_c}$.

\end{proof}

\bibliographystyle{plain}
\bibliography{polyZ}

\end{document}